\numberwithin{equation}{section}
\theoremstyle{plain}
\newtheorem{theorem}{Theorem}[section]
\newtheorem{corollary}[theorem]{Corollary}
\newtheorem{lemma}[theorem]{Lemma}
\newtheorem{proposition}[theorem]{Proposition}
\theoremstyle{definition}
\newtheorem{definition}[theorem]{Definition}
\newtheorem{remark}[theorem]{Remark}
\theoremstyle{remark}
\newcommand{\A}{\mathbb{A}}
\newcommand{\R}{\mathbb{R}}
\newcommand{\Q}{\mathbb{Q}}
\newcommand{\Z}{\mathbb{Z}}
\newcommand{\C}{\mathbb{C}}
\renewcommand{\H}{\mathbb{H}}
\newcommand{\D}{\mathbb{D}}
\newcommand{\zxz}[4]{\begin{pmatrix} #1 & #2 \\ #3 & #4 \end{pmatrix}}
\newcommand{\kzxz}[4]{\left(\begin{smallmatrix} #1 & #2 \\ #3 & #4\end{smallmatrix}\right) }
\newcommand{\kabcd}{\kzxz{a}{b}{c}{d}}
\newcommand{\calH}{\mathcal{H}}
\newcommand{\calK}{\mathcal{K}}
\newcommand{\calL}{\mathcal{L}}
\newcommand{\calM}{\mathcal{M}}
\newcommand{\calO}{\mathcal{O}}
\newcommand{\calW}{\mathcal{W}}
\newcommand{\frakp}{\mathfrak p}
\newcommand{\eps}{\varepsilon}
\newcommand{\bs}{\backslash}
\newcommand{\norm}{\operatorname{N}}
\newcommand{\vol}{\operatorname{vol}}
\newcommand{\tr}{\operatorname{tr}}
\newcommand{\sgn}{\operatorname{sgn}}
\newcommand{\Sl}{\operatorname{SL}}
\newcommand{\Symp}{\operatorname{Sp}}
\newcommand{\GSpin}{\operatorname{GSpin}}
\newcommand{\Mp}{\operatorname{Mp}}
\newcommand{\Orth}{\operatorname{O}}
\newcommand{\Char}{\operatorname{char}}
\newcommand{\sig}{\operatorname{sig}}
\newcommand{\SO}{\operatorname{SO}}
\newcommand{\Res}{\operatorname{Res}}
\newcommand{\Div}{\operatorname{Div}}
\newcommand{\dv}{\operatorname{div}}
\newcommand{\ch}{\operatorname{CH}}
\newcommand{\SL}{\operatorname{SL}}
\begin{document}

\title[Regularized theta lifts]{Regularized theta lifts for orthogonal groups over totally real fields}

\author[Jan H.~Bruinier]{Jan
Hendrik Bruinier}
\address{Fachbereich Mathematik,
Technische Universit\"at Darmstadt, Schlossgartenstrasse 7, D--64289
Darmstadt, Germany}
\email{bruinier@mathematik.tu-darmstadt.de}
\subjclass[2000]{11F55, 11G18, 14G35}

\thanks{The author is partially supported by DFG grant BR-2163/2-1.}

\date{\today}

\begin{abstract}
We define a regularized theta lift from $\SL_2$ to orthogonal groups over totally real fields.
It takes harmonic `Whittaker forms' to automorphic Green functions and weakly holomorphic Whittaker forms to meromorphic modular forms on orthogonal groups with zeros and poles supported on special divisors, generalizing Borcherds' work on automorphic products.
To prove our results we use the spectral expansion of the lift and study its relationship with the cohomological theta lift of Kudla and Millson.
%
\end{abstract}

\maketitle


\section{Introduction}

The theory of dual reductive pairs and theta liftings provides an
important tool for the construction of automorphic forms and for
understanding the relationship between automorphic forms on different
groups. A new aspect was added to the theory by the celebrated
discovery of Harvey--Moore and Borcherds that divergent theta
integrals can often be regularized \cite{HM}, \cite{Bo2}. One can
define a regularized theta lift of (vector valued) weakly holomorphic
modular forms for $\SL_2(\Z)$ to meromorphic modular forms on
orthogonal groups associated to {\em rational\/} quadratic spaces of
signature $(n,2)$. These lifts have their zeros and poles on special
divisors (also referred to as Heegner divisors or rational quadratic
divisors). Their Fourier expansions are given by infinite products, so
called Borcherds products.  They have found various applications, for
instance in the theory of generalized Kac-Moody algebras, in the study
of moduli problems, and in the geometry and arithmetic of Shimura
varieties, see e.g.~\cite{AF}, \cite{Bo3}, \cite{Bo:CDM}, \cite{GN},
\cite{Ku:Integrals}, \cite{BBK},  \cite{BY}.

Since the work of Borcherds, there has been the question whether the
regularized theta lift can be generalized to dual reductive pairs
other than $(\SL_2,\Orth(V))$, where $V$ is a quadratic space over
$\Q$ (see \cite{Bo2}, Problem 16.4).  For instance, one would like to
define it for quadratic spaces over totally real number fields as
well.
There are two serious problems that arise. First, the special cycles
on which the lift should have its singularities are not divisors in
general, so one cannot expect that they are related to a single
meromorphic function. Second, the straightforward generalization of
weakly holomorphic elliptic modular forms, the ``input'' for the lift,
would be meromorphic Hilbert modular forms whose poles are supported
at the Baily--Borel boundary. However, by the Koecher principle, there
are no non-trivial modular forms of this type.

In the present paper, we propose a solution for the second problem.
We introduce the notion of a weakly holomorphic Whittaker form and consider a regularized theta lift of such functions.
It leads to
meromorphic modular forms with singularities
along special
divisors, generalizing Borcherds' construction of automorphic products \cite{Bo2}.
More generally, we study harmonic Whittaker forms and their
regularized theta lifts. They give rise to Green functions in the
sense of Arakelov geometry (cf.~\cite{SABK}, \cite{BKK}).

We now describe the content of this paper in more detail.  Let $F$ be
a totally real number field of degree $d$ and discriminant $D$. We
write $\hat F$ for the ring of finite adeles and $\calO_F$ for the
ring of integers of $F$. Let $(V,Q)$ be a quadratic space over $F$ of
dimension $\ell=n+2$ and assume that the signature of $V$ at the
archimedian places of $F$ is equal to
\begin{align*}
\label{eq:sighyp}
((n,2),(n+2,0),\dots,(n+2,0)).
\end{align*}
We consider the algebraic group $H=\Res_{F/\Q}\GSpin(V)$ over $\Q$
given by Weil restriction of scalars.
 Our hypothesis on the signature
 guarantees that the symmetric space $\D$ associated to $H(\R)$ carries an invariant hermitean structure and that there exist special {\em divisors}  in the sense of \cite{Ku:Duke}.

Let
$L\subset V$ be an even $\calO_F$-lattice. To simplify the exposition
we assume throughout this introduction that $L$ be unimodular. This
implies in particular that $n$ is even. The case of arbitrary even
lattices of possibly odd rank is treated in the body of the paper.
For a compact open subgroup $K\subset H(\hat \Q)$ stabilizing $L$ we
consider the Shimura variety
\begin{align*}
X_K=H(\Q)\bs (\D\times H(\hat \Q))/K.
\end{align*}
It is a quasi-projective variety of dimension $n$ defined over $F$,
see \cite{Ku:Duke}. It is projective if and only if $V$ is anisotropic
over $F$. By our assumption on the signature of $V$ this is always the
case if $d>1$.

Associated to $L$ there is a Siegel theta function
$\Theta_S(\tau,z,h)$, where $\tau=u+iv\in \H^d$, $z\in \D$, and $h\in
H(\hat \Q)$, see Section 3. In the variable $\tau$ it transforms as a
nonholomorphic Hilbert modular form of weight $(\frac{n-2}{2},
\frac{n+2}{2},\dots,\frac{n+2}{2})$ for the group
$\Gamma=\Sl_2(\calO_F)$. In the variable $(z,h)$ it is
$H(\Q)$-invariant.  For a Hilbert modular form $f$ which is
holomorphic in $\tau_1$ and antiholomorphic in $\tau_2,\dots,\tau_d$
of weight $k=:(\frac{2-n}{2}, \frac{n+2}{2},\dots,\frac{n+2}{2})$, we
would like to consider the theta integral
\[
\phi(z,h,f)= \frac{1}{\sqrt{D}}\int_{\Gamma\bs \H^d}  f(\tau)\Theta_S(\tau, z,h) (v_2\cdots v_d)^{\ell/2}\,d\mu(\tau).
\]
However, if $n\geq 2$ then there are no non-constant Hilbert modular
forms of this type with moderate growth at the cusps. In view of
Borcherds' work on regularized theta lifts for $F=\Q$ one could try to look at
the integral for Hilbert modular forms
with singularities at the cusps. But the Koecher principle implies
that there are no non-constant forms of this type when $d>1$. Now
one could further relax the assumptions on $f$ and allow singularities
on some divisor in addition to the cusps. However, then the lift tends
to behave poorly under the invariant differential operators for
$H(\R)$.

Instead, we use a different approach here. To motivate it, we briefly
revisit the case where $F=\Q$.  For simplicity we assume in this paragraph  that
$n>2$. Any weakly holomorphic modular form of weight $k=\frac{2-n}{2}$
for $\Gamma=\Sl_2(\Z)$ can be constructed as a Poincar\'e series from
a Whittaker form.  For $s\in \C$ and $v\in \R\setminus \{0\}$ we let
\[
\calM_s(v)= |v|^{-k/2} M_{\sgn(v) k/2,s/2}(|v|)\cdot e^{-v/2},
\]
where $M_{\nu,\mu}$ denotes the usual Whittaker function, and for
any positive integer $m$ we put
\[
f_m(\tau,s)= \Gamma(s+1)^{-1} \calM_s(-4\pi m v)e(-m\bar \tau),
\]
where $e(u):=e^{2\pi i u}$. A {\em harmonic Whittaker form} $f$ of
weight $k$ is a finite linear combination
of the functions $f_m(\tau,1-k)$ with $m>0$ (see Remark \ref{rem:charwhitt} for a characterization by differential equations and growth conditions).  Note that such a
function is annihilated by the hyperbolic Laplacian in weight $k$ and
has exponential growth as $v\to\infty$.  There is a differential
operator $\xi_k$ taking harmonic Whittaker forms of weight $k$ to cusp
forms of weight $2-k$. It is defined by
\begin{align*}
\xi_k(f)=2i \sum_{\gamma\in \Gamma_\infty\bs \Gamma} v^k \overline{\frac{\partial}{\partial \bar \tau} (f)}\mid_{2-k}\gamma,
\end{align*}
where $\Gamma_\infty=\{\kzxz{1}{b}{0}{1};\; b\in \Z\}$ denotes the subgroup
of translations of $\Gamma$.
We call $f$  {\em weakly holomorphic\/} if $\xi_k(f)=0$.
If $f$ is a harmonic Whittaker form of weight $k$, then the Poincar\'e
series
\begin{align}
\label{intro:ps}
\eta(f)=\sum_{\gamma\in \Gamma_\infty\bs \Gamma} f\mid_k \gamma
\end{align}
converges and defines a harmonic weak Maass form of weight $k$ for
$\Gamma$ in the sense of \cite{BF}.  The assignment $f\mapsto \eta(f)$
defines an isomorphism between the space of harmonic Whittaker forms
and the space of harmonic weak Maass forms of weight $k$.  Moreover,
$\eta(f)$ is a weakly holomorphic modular form if and only if $f$ is a
weakly holomorphic Whittaker form (see Proposition \ref{prop:wm}).
Many properties of harmonic weak Maass forms and their relationship to
weakly holomorphic modular forms and cusp forms can also be rephrased
using Whittaker forms, see Section \ref{sect:4} for details.

If $f$ is a harmonic Whittaker form and $\eta(f)$ the corresponding
weak Maass form, we can unfold the regularized theta integral
\begin{align}
\nonumber
\Phi(z,h,\eta(f))&= \int_{\Gamma\bs \H}  \eta(f)(\tau) \Theta_S(\tau, z,h) \,d\mu(\tau)\\
\label{intro:theta}
&=\int_{\Gamma_\infty\bs \H}  f(\tau) \Theta_S(\tau, z,h) \,d\mu(\tau).
\end{align}
Consequently, regularized theta
lifts of weakly holomorphic elliptic modular forms can also be viewed
as regularized theta lifts of weakly holomorphic Whittaker forms.

We now come back to the more general setup above for an arbitrary
totally real field $F$ of degree $d$.  The idea of the present paper
is that Whittaker forms have a straightforward generalization to this
situation (see Definitions \ref{def:wm} and \ref{def:wm2}).
Since they are only invariant under translations there is no Koecher
principle.  If $d>1$, then Poincar\'e series analogous to
\eqref{intro:ps} diverge wildly.  Nevertheless, in view of
\eqref{intro:theta} we consider for a Whittaker form $f$ of weight $k$
the theta integral
\[
\Phi(z,h,f)= \frac{1}{\sqrt{D}}\int_{\Gamma_\infty\bs \H^d}  f(\tau)\Theta_S(\tau, z,h) (v_2\cdots v_d)^{\ell/2}\,d\mu(\tau),
\]
where $\Gamma_\infty=\{\kzxz{1}{b}{0}{1};\; b\in \calO_F\}$ denotes the subgroup
of translations of $\Gamma=\Sl_2(\calO_F)$.  Since Whittaker forms are
exponentially increasing as $v_1\to \infty$, the integral has to be
regularized.

In Section \ref{sect:sing} we define the regularization and study its
properties. It suffices to do this for the functions
$f_{m}(\tau,1-k)$ for $m$ totally positive, see \eqref{eq:deffms}. If $\Re(s)$ is
sufficiently large, then the theta integral of $f_{m}(\tau,s)$ can be
regularized by first integrating over $u$ and afterwards over $v$, see
Definition~\ref{def:regint}. The resulting function $\Phi_m(z,h,s)$ is
an eigenfunction of the invariant Laplacian on $\D$ with a singularity
along the special divisor $Z(m)$ of discriminant $m$.  By means of ideas
of Oda and Tsuzuki \cite{OT}
we compute
the spectral expansion of $\Phi_m(z,h,s)$ and employ it to derive a
meromorphic continuation to the whole $s$-plane and a functional
equation in $s$ (see Theorems \ref{thm:spectral} and \ref{thm:smoothcont}).
We define the regularized theta integral of
$f_{m}(\tau,1-k)$ as the constant term in the Laurent expansion of
$\Phi_m(z,h,s)$ at $s=1-k$.  So for any harmonic Whittaker form
\begin{align}
\label{intro:f}
f=\sum_{m\gg 0} c(m) f_{m}(\tau,1-k)
\end{align}
of weight $k$ we
obtain a regularized theta lift $\Phi(z,h,f)$.
\begin{theorem}
  (See Theorem \ref{thm:reglift}.)  The regularized theta lift
  $\Phi(z,h,f)$ of $f$ is a logarithmic Green function in the sense of
  Arakelov geometry for the divisor
\[
Z(f)=\sum_{m\gg 0} c(m)Z(m).
\]
\end{theorem}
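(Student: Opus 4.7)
By linearity of both the lift and of $Z(f)$ in the coefficients $c(m)$, it suffices to treat a single basis function $f_{-m}(\tau,1-k)$, i.e.\ to show that the constant term $\Phi_m(z,h,1-k)$ in the Laurent expansion of $\Phi_m(z,h,s)$ at $s=1-k$ is a logarithmic Green function for the divisor $Z(m)$. This breaks into three assertions: (a) smoothness of $\Phi_m(\cdot,1-k)$ on $X_K \setminus |Z(m)|$; (b) a logarithmic singularity of type $-\log|s_\lambda|^2 + C^\infty$ along each local component of $Z(m)$; and (c) the current equation $dd^c[\Phi_m(\cdot,1-k)] + \delta_{Z(m)} = [\omega_m]$ with $\omega_m$ a smooth form.

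For (a) my plan is to exploit the fact that for $\Re(s)$ large the convergent definition of $\Phi_m(z,h,s)$ produces a real-analytic eigenfunction of the invariant Laplacian on $\D$ off $|Z(m)|$, and that the spectral expansion together with its meromorphic continuation (Theorems \ref{thm:spectral} and \ref{thm:smoothcont}) propagates both the real analyticity in $(z,h)$ and the Laplace eigenvalue equation to the constant term at $s=1-k$ on the complement of $|Z(m)|$.

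The serious work is in (b). I would split the Siegel theta kernel $\Theta_S(\tau,z,h)$ with respect to the orthogonal decomposition of $V\otimes\R$ determined by a base point $z_0\in Z(m)$, in the style of Borcherds \cite{Bo2}. In a small neighborhood $U$ of $z_0$ only finitely many vectors $\lambda\in L$ with $Q(\lambda)=m$ have $\lambda^\perp$ meeting $U$; the remaining part of the theta series contributes a locally smooth function. For each such singular $\lambda$, performing the $u$-integration picks off the Fourier coefficient $e(-mu)$ of $\Theta_S$, leaving, for $\Re(s)$ large, a $v$-integral of a Whittaker function in $v_1$ against Gaussians in $v_2,\dots,v_d$, multiplied by a further Gaussian in the distance from $z$ to the local component $D_\lambda=\{s_\lambda=0\}$ of $Z(m)$. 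This integral evaluates in terms of incomplete Whittaker functions whose analytic continuation, pushed through to $s=1-k$ and combined with the spectral meromorphic continuation, yields the expected $-\log|s_\lambda|^2$ plus a real-analytic remainder. The main obstacle is here: one must track the Gaussian factors $v_j^{\ell/2}$ coming from the totally positive archimedean places and the Whittaker regularization at $\tau_1$ carefully enough to read off the normalization of the $-\log|s_\lambda|^2$ coefficient, and to match the two expressions on the overlap where $\Re(s)$ is large and the local analysis is valid.

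Assertion (c) then follows by combining (a) and (b). The explicit local model $\Phi_m(z,h,1-k)=-\log|s_\lambda|^2+C^\infty$ around each component of $Z(m)$ yields the delta-current $\delta_{Z(m)}$ by a standard distributional computation using integration by parts, while smoothness of $\omega_m=dd^c\Phi_m+\delta_{Z(m)}$ globally is forced by the fact that $\Phi_m$ is a Laplace eigenfunction off the singular locus. Summing with coefficients $c(m)$ produces the asserted Green equation for $Z(f)$.
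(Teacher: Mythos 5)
Your overall architecture---reduce by linearity to a single $f_{-m,\mu}$, get smoothness off the divisor from the spectral continuation, extract the logarithmic singularity from the finitely many lattice vectors perpendicular to the base point, and finish with Poincar\'e--Lelong---is the same as the paper's. The genuine gap is in your step (b), which you yourself flag as the ``main obstacle'': the exact identification of the local singularity as $-\log|Q(\lambda_{1z})|$ for each singular $\lambda$ (equivalently $-\log|s_\lambda|^2$, i.e.\ a logarithmic singularity along $-2Z(f)$) is never carried out, and this normalization is precisely the content of the theorem; without it you only get that $\Phi$ is a Green function for \emph{some} multiple of $Z(f)$. Moreover the computational route you sketch is off target for this regularization: since the unfolded integral is over $\tilde\Gamma_N\backslash\H^d$, i.e.\ over the full range $v\in(\R_{>0})^d$ with no truncation of the $v_1$-variable, the $v$-integral attached to each singular $\lambda$ is a \emph{complete} Laplace transform, not an incomplete one; it evaluates in closed form (Theorem \ref{thm:lift2}) to the hypergeometric function $\phi(\lambda,z,s)$ of \eqref{eq:phi}, and no incomplete Whittaker functions arise. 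The paper then pins down the log coefficient by the elementary Lemma \ref{lem:reglift}, namely that $\tfrac{2}{n}w^{n/2}F(n/2,1,n/2+1;w)+\log(1-w)$ is real analytic at $w=1$, so that $\phi(\lambda,z,s_0)+\log|Q(\lambda_{1z})|$ extends real analytically; you need this identity or an equivalent computation.

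The second under-addressed point is the bridge from the region where the termwise evaluation is valid ($\Re(s)>s_0+2$ for the integral, $\Re(s)>s_0$ for the series) to the constant term at $s=s_0$, which lies on the boundary of convergence and is a simple pole of $\Phi_{m,\mu}(z,h,s)$. You say the local analysis should be ``combined with the spectral meromorphic continuation,'' but what must actually be proved is that, after removing the finite singular sum $\sum_{\lambda\perp z_0}\phi(\lambda,z,s)$, the remainder continues to $s=s_0$ and stays real analytic in $(z,h)$ near $z_0$; this is exactly what the cutoff-function decomposition in the proof of Theorem \ref{thm:smoothcont} provides, combined with the observation (Corollary \ref{cor:res}) that the residue at $s_0$ is independent of $(z,h)$, so that passing to the constant term of the Laurent expansion does not disturb the local singularity structure. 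Finally, a small correction to your (a)/(c): the constant term is not a Laplace eigenfunction; because of the pole it satisfies $\Delta_\D\Phi_{m,\mu}=\tfrac{n}{8}A(m,\mu)$ off the divisor (Remark \ref{thm:regdiffeq}), which is still sufficient for your regularity and Poincar\'e--Lelong arguments but should be stated correctly.
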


In Section \ref{sect:6} we investigate the relationship of the
regularized theta lift and the Kudla--Millson lift (see
e.g.~\cite{KM3}).
Recall that Kudla and Millson constructed a theta function
$\Theta_{KM}(\tau,z,h)$ which transforms in $\tau$ like a
non-holomorphic Hilbert modular form of weight
$\kappa=(\frac{n+2}{2},\dots,\frac{n+2}{2})$ and which takes values in
the closed differential forms of type $(1,1)$ on $X_K$.  For a Hilbert
cusp form $g$ of weight $\kappa$ we may consider the theta integral
\begin{align*}
\Lambda(z,h,g) = \frac{1}{\sqrt{D}} \int_{\Gamma\bs \H^d}
\overline {g(\tau)}  \Theta_{KM}(\tau,z,h) v^\kappa \,d\mu(\tau).
\end{align*}
It gives rise to a map from Hilbert cusp forms to closed harmonic
$(1,1)$-forms on $X_K$.  Using the action of various differential
operators on the Siegel and the Kudla--Millson theta kernels as in
\cite{BF}, we prove (see Theorem \ref{thm:bokm}):

\begin{theorem}
Let $f$ be a harmonic Whittaker form of weight $k$ for $\Gamma$. Then
\begin{align*}
dd^c\Phi(z,h,f)=\Lambda(z,h,\xi_k(f))-B(f)\Omega.
\end{align*}
Here $B(f)$ is a constant which is explicitly given by the Fourier
coefficients of a certain Hilbert Eisenstein series of weight $\kappa$ and $\Omega$
denotes the invariant K\"ahler form on $\D$.
\end{theorem}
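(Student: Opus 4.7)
The plan is to apply the operator $dd^c$ directly inside the regularized integral defining $\Phi(z,h,f)$, using an identity between the differentials of the two theta kernels, and then to integrate by parts in the variable $\tau_1$ at the single archimedian place where $V$ has signature $(n,2)$. This follows the strategy of Bruinier--Funke \cite{BF} but complicated by the regularization, the boundary contributions at the cusp of $\Gamma_N\bs \H^d$, and the totally real setting.

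First, I would establish (or invoke from Section \ref{sect:6}) the key kernel identity
\[
dd^c\bigl[\Theta_S(\tau,z,h)\,(v_2\cdots v_d)^{\ell/2}\bigr] \;=\; C\cdot\frac{\partial}{\partial\bar\tau_1}\!\bigl[\Theta_{KM}(\tau,z,h)\,v_1^{k_1}(v_2\cdots v_d)^{(n+2)/2}\bigr]\,d\bar\tau_1\wedge \text{(stuff)} \;+\; R(\tau,z,h)\,\Omega,
\]
relating the $(1,1)$-form obtained by applying $dd^c$ on $\D$ to the Siegel kernel with the Maass lowering operator in $\tau_1$ applied to the Kudla--Millson kernel, up to a term proportional to the Kähler form $\Omega$ whose coefficient $R$ is explicit. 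The existence of such an identity is the orthogonal analogue of the Bruinier--Funke Laplacian identity and is a local computation on the Lie algebra level; the factors $v_j^{(n+2)/2}$ for $j\ge 2$ are harmless because $f$ is antiholomorphic in those variables.

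Second, I would substitute this identity into the regularized integral. Interchanging $dd^c$ with the integral is justified by the meromorphic continuation of $\Phi_m(z,h,s)$ established in Theorems \ref{thm:spectral} and \ref{thm:smoothcont}, so that the interchange is valid for $\Re(s)$ large and then extended by analytic continuation to the constant term at $s=1-k$. Then I would apply Stokes' theorem in $\tau_1$ on $\Gamma_N\bs \H^d$ to move $\partial/\partial\bar\tau_1$ from $\Theta_{KM}$ onto $f(\tau)$; the resulting interior integral, after the conjugation implicit in the definition of $\xi_k$, gives precisely
\[
\frac{1}{\sqrt D}\int_{\Gamma_N\bs \H^d}\overline{v_1^{k_1}\tfrac{\partial}{\partial\bar\tau_1}f(\tau)}\;\Theta_{KM}(\tau,z,h)\,v^{\kappa}\,d\mu(\tau),
\]
which, upon folding the sum over $\Gamma_N\bs\Gamma$ hidden in $\xi_k(f)$ back into the fundamental domain, equals $\Lambda(z,h,\xi_k(f))$.

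Third, the Stokes boundary is taken at $v_1\to\infty$ (the other three walls of a fundamental domain for $\Gamma_N$ in $\H^d$ cancel by $\Gamma_N$-equivariance). The boundary integral picks out the constant term in $u_1$ of $f(\tau)\Theta_{KM}(\tau,z,h)$; since $\Theta_{KM}$ is the Kudla--Millson kernel, this constant term is known to be a Hilbert Eisenstein series of weight $\kappa$ times $\Omega$ (at least up to rapidly decaying terms which vanish in the limit), and pairing its Fourier coefficients against those of $f$, as written out in \eqref{intro:f}, yields a finite expression
\[
B(f)=\sum_{m\gg 0} c(m)\,b(-m),
\]
where $b(-m)$ are the relevant Eisenstein Fourier coefficients. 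Combining the interior and boundary contributions gives the claimed formula.

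The main obstacle will be the third step: controlling the boundary term rigorously despite the exponential growth of $f$ at the cusp. Because $f_{-m}(\tau,s)$ only gives a convergent integral for $\Re(s)\gg 0$, the boundary contribution at $s=1-k$ must be read off from the Laurent expansion supplied by the spectral machinery of Section \ref{sect:sing}. A secondary technical point is the explicit identification of the Eisenstein series whose Fourier coefficients give $B(f)$; this requires computing the constant Fourier coefficient of $\Theta_{KM}$ in $u_1$ and recognizing it, via the Siegel--Weil formula for the quadratic space $V$ with the $\tau_1$-signature split off, as the relevant Hilbert Eisenstein series of weight $\kappa$.
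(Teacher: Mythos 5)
Your first two steps follow the paper's route (Proposition \ref{prop:ddcex}): apply $dd^c$ under the regularized integral for $\Re(s)$ large, use the kernel identity $dd^c\varphi_0=-L\varphi_{KM}$ of Proposition \ref{prop:BF} (with no extra $R\,\Omega$ term, by the way), and integrate by parts in $\tau_1$. But your third step misidentifies the source of the term $-B(f)\Omega$, and this is a genuine gap. Since $f_{-m,\mu}$ carries the frequency $-m$ in $u$, the Stokes boundary at $v_1\to\infty$ pairs $f$ against the $m$-th Fourier coefficient of $\Theta_{KM}$, not its constant term; away from $Z(m,\mu)$ that coefficient decays like $e^{-2\pi(Q(\lambda_{1z^\perp})-Q(\lambda_{1z}))v_1}$ with $Q(\lambda_{1z^\perp})-Q(\lambda_{1z})>m_1$, which beats the growth $e^{2\pi m_1v_1}$ of $f$, so this boundary contribution vanishes (as the paper checks; the $v_1\to 0$ wall is what forces $\Re(s)>s_0+2$). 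Moreover, the constant term of $\Theta_{KM}$ in $u$ is essentially $\varphi_{KM}(0)=-\Omega$ times powers of $v$, not a Hilbert Eisenstein series, so there is no mechanism by which the boundary produces $-B(f)\Omega$.

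What actually produces that term is the failure of your unfolding in step two: $\Theta_{KM}$ has only moderate growth, so folding the $\tilde\Gamma_N\backslash\H^d$ integral of $\overline{\delta_k(f)}$ against $\Theta_{KM}$ into the Petersson pairing with the Poincar\'e series $\xi_k(f)$ is not legitimate, and the discrepancy is exactly the Eisenstein term. The paper's key move (Proposition \ref{prop:ddcex2} and Theorem \ref{thm:bokm}) is to invoke the Siegel--Weil formula in the $(z,h)$-variable, namely \eqref{eq:kms}, to split $\Theta_{KM}=\tilde\Theta_{KM}-E_L(\tau,\kappa)\Omega$ into a rapidly decreasing ``cuspidal'' part and the Eisenstein part given by the average over $X_K$; the regularized integral of $\overline{\delta_k(f_{-m,\mu}(\cdot,s))}$ against $E_L(\tau,\kappa)\Omega$ is then an explicit Laplace transform equal to $B(m,\mu)\Omega/\Gamma(\tfrac{s}{2}-\tfrac{s_0}{2}+1)$, while the integral against $\tilde\Theta_{KM}$ converges near $s_0$ (for $n>2$; with Hecke summation in $s'$ for $n=1,2$), may be unfolded there, and yields $\Lambda(z,h,\xi_k(f))$ because the cusp form $\xi_k(f)$ is orthogonal to $E_L(\tau,\kappa)$. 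So the Siegel--Weil input is not about ``splitting off the $\tau_1$-signature'' at the archimedean place, but about identifying the $(z,h)$-average of $\Theta_{KM}$ over $X_K$ with the Eisenstein series whose coefficients $B(m,\mu)$ define $B(f)$; without this decomposition your accounting gives $dd^c\Phi=\Lambda(z,h,\xi_k(f))$ and misses the constant term.
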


If $f$ is weakly holomorphic, then $\xi_k(f)=0$.
Hence the first term on the right hand side vanishes and $\Phi(z,h,f)$
is essentially a pluriharmonic function. This can be used to prove the
main result of the present paper (Theorem \ref{thm:bop}).

\begin{theorem}
\label{intro:bop}
Let $f$ be a weakly holomorphic Whittaker form of weight $k$ for
$\Gamma$ as in \eqref{intro:f}. Assume that the coefficients $c(m)$
are integral.  Then there exists a meromorphic modular form $\Psi(z,h,f)$ for
$H(\Q)$ of level $K$ with a multiplier system of finite order such
that:
\begin{enumerate}
\item[(i)]
The weight of $\Psi$ is  $-B(f)$.
\item[(ii)]
The divisor of $\Psi$ is equal to $Z(f)$.
\item[(iii)]
The Petersson metric of $\Psi$ is given by
\[
-\log\|\Psi(z,h,f)\|_{Pet}^2 =
\Phi(z,h,f).
\]
\end{enumerate}
\end{theorem}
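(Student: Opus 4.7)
First, I would combine the two preceding theorems. Since $f$ is weakly holomorphic, $\xi_k(f)=0$, and so Theorem~\ref{thm:bokm} simplifies to
\[
dd^c\,\Phi(z,h,f) = -B(f)\,\Omega
\]
on the complement of $Z(f)$. Together with Theorem~\ref{thm:reglift}, which states that $\Phi(z,h,f)$ is a logarithmic Green function for $Z(f)=\sum_m c(m)Z(m)$ in the Arakelov sense, this determines $\Phi$ as a current on $X_K$: its singular part is prescribed by the divisor $Z(f)$ and its smooth part is $-B(f)\,\Omega$. By assumption the $c(m)$ are integers, so $Z(f)$ is an honest divisor with integral multiplicities.

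Next, I would construct $\Psi$ locally by a $dd^c$-Poincaré lemma argument. Working on the cover $\mathbb{D}\times H(\hat{\mathbb{Q}})$, pick a smooth Kähler potential $\chi$ with $\Omega=dd^c\chi$. The standard realization of $\mathbb{D}$ as a Harish--Chandra bounded domain provides such a $\chi$ that transforms under $H(\mathbb{R})$ by the automorphy factor attached to the line bundle of weight $-B(f)$ modular forms equipped with its Petersson metric (this is a classical computation and fixes the weight). On any sufficiently small open $U\subset\mathbb{D}$, the function $\Phi(z,h,f)+B(f)\,\chi$ is then pluriharmonic on $U\setminus Z(f)$ and has integer logarithmic singularities along $Z(f)|_U$; the $dd^c$-Poincaré lemma therefore produces a meromorphic function $\psi_U$ on $U$ with $\operatorname{div}(\psi_U)=Z(f)|_U$ and
\[
\Phi(z,h,f) = -\log|\psi_U|^2 - B(f)\,\chi,
\]
unique up to multiplication by a constant in $U(1)$.

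The third step is global assembly. Since $\chi$ already carries the correct weight $-B(f)$ automorphy factor, the transition data between the local $\psi_U$ defines a $1$-cocycle with values in $U(1)$. Thus the $\psi_U$ glue to a meromorphic section $\Psi(z,h,f)$ of the line bundle of weight $-B(f)$ modular forms on $X_K$, with multiplier system $\sigma\colon H(\mathbb{Q})\to U(1)$. By construction $\Psi$ has divisor $Z(f)$, weight $-B(f)$, and Petersson norm
\[
-\log\|\Psi(z,h,f)\|^2_{\mathrm{Pet}} = \Phi(z,h,f),
\]
giving (i)--(iii) modulo the finite-order statement for $\sigma$.

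The main obstacle is precisely this last point: unimodularity of the cocycle follows from the local construction, but finite order requires additional rigidity. A natural route is to identify, in $\operatorname{Pic}(X_K)\otimes\mathbb{Q}$, the class of $\mathcal{O}(Z(f))$ with that of the weight $-B(f)$ modular-forms bundle, so that the discrepancy is pure torsion; alternatively, one can argue directly by computing the behavior of $\Phi$ under the action of a subgroup (e.g.\ a unipotent or CM stabilizer) and showing the induced character lands in the roots of unity, exploiting the integrality of the $c(m)$ and the rational structure of the Shimura datum. When $d=1$ this reduces to Borcherds' theorem and the finite-order statement follows from the explicit product expansion; for $d>1$ the absence of the Koecher principle precludes a direct product formula, making this step genuinely the most delicate part of the argument.
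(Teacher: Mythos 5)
Up to the finite-order claim, your plan follows the paper's own route: $\xi_k(f)=0$ kills the cuspidal term in Theorem \ref{thm:bokm}, so $dd^c\Phi=-B(f)\Omega$; combined with the logarithmic singularities from Theorem \ref{thm:reglift} one writes $\Phi(z,h,f)-B(f)\log|y|^2$ as a pluriharmonic function with integral log singularities and exponentiates it. The paper does this in one step on the convex tube domain $\calH$ (Lemma \ref{lem:logprod}, solvability of the Cousin problem plus a global harmonic conjugate), and then gets the unitary multiplier system from the maximum modulus principle (Lemma \ref{lem:3.21}); your local $dd^c$-Poincar\'e lemma plus $U(1)$-cocycle gluing is an equivalent packaging, since $\calH$ is contractible. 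So (i)--(iii) with \emph{some} unitary multiplier system are fine.

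The genuine gap is the finiteness of the multiplier system, which is part of the statement and which you leave unresolved. Neither of your suggested routes works as stated. The ``direct'' route via unipotent or CM stabilizers fails: for $d>1$ the space $V$ is anisotropic over $F$, the arithmetic groups $\Gamma_j$ are cocompact and contain no unipotent elements, and controlling the character on finite (CM) stabilizers says nothing about its global order. The route via an identity $\OO(Z(f))\equiv\calM_{-B(f)}$ in $\Pic(X_K)\otimes\Q$ is essentially correct in spirit, but you must say where that identity comes from without circularity: in the paper this identity (equivalently the modularity of the generating series, Theorem \ref{thm:modularity}) is \emph{deduced} from the theorem you are proving, so it can only be used here if it is supplied independently. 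The paper's actual resolution is different and you do not mention it: for $n>2$, $H(\R)$ has no almost simple factor of real rank one, and Margulis' theorem \cite{Mar} forces any unitary character of the lattices $\Gamma_j$ to have finite order; for $n=1,2$ one invokes the embedding trick and the pullback formula of \cite{YZZ} (Proposition \ref{prop:modularity}) to get the Chow-group relation $Z(f)=-B(f)c_1(\calM_1)$ independently, producing a comparison form $\tilde\Psi$ with finite-order multiplier and the same divisor, whence $\Psi/\tilde\Psi$ is a nowhere-vanishing weight-zero form, hence constant (Corollary \ref{cor:finmult}). Without an input of this kind your argument proves the theorem only with a unitary multiplier system of a priori infinite order.
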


Up to the statement about the Fourier expansion of $\Psi$, this result
is completely analogous to Theorem 13.3 of \cite{Bo2} on the
regularized theta lift of weakly holomorphic modular forms.
When $d=1$, it
is compatible via the map $\eta$ with Borcherds' result  (and gives
a new proof of it).
Notice that
when $d>1$, the variety $X_K$ is compact and there are no Fourier
expansions.

The third assertion of Theorem \ref{intro:bop} provides a regularized
integral representation for $\log\|\Psi\|_{Pet}^2$.
In a follow-up paper \cite{BY:Ku} this is used to compute CM values and integrals of
$\log\|\Psi\|_{Pet}^2$, and more generally of the Green functions
$\Phi(z,h,f)$, extending results of \cite{Ku:Integrals}, \cite{Scho},
\cite{BY}, \cite{BK} to totally real fields. Such quantities can be interpreted as archimedian intersection pairings and therefore play an important role in
arithmetic intersection theory.

Theorem \ref{intro:bop} can be used (when $n>2$)
to show that the generating series
\[
A(\tau)= -c_1(\calM_1) + \sum_{m\gg0} Z(m)q^m,
\]
of the special divisors $Z(m)$ is a Hilbert modular form of weight
$\kappa$ with values in the first Chow group of $X_K$ (see Theorem
\ref{thm:modularity}). Here $c_1(\calM_1)$ denotes the Chern class of
the line bundle of modular forms of weight $1$.  Our proof is a
variant of the proof that Borcherds gave for $F=\Q$, see \cite{Bo3}.
This result was also proved in \cite{YZZ} using the modularity result
of Kudla--Millson \cite{KM3} for the cohomology classes of special
divisors.

In Section \ref{sect:8} we present some examples illustrating Theorem
\ref{intro:bop}.
In particular, we consider Shimura curves over totally real fields and
Shimura varieties associated to orthogonal groups of even unimodular
lattices over real quadratic fields.

I thank A. Deitmar for his help with Lemma \ref{lem:uniconv} and
G. Nebe for her help with Section \ref{sect:8.2}. Moreover, I thank E. Freitag, J. Funke, K. Ono and T. Yang for many useful conversations and comments related to this paper.

\section{Quadratic spaces and Shimura varieties}
\label{sect2}

Throughout we use the setup of \cite{Ku:Duke}, Section 1.
Let $F$ be a totally real number field of degree $d$ over $\Q$. We write $\calO_F$ for the ring of integers in $F$, and write $\partial=\partial_F$ for the different ideal of $F$.
The discriminant of $F$ is denoted by $D=\norm(\partial_F)=\#\calO_F/\partial_F$.
Let $\sigma_1,\dots,\sigma_d$ be the different embeddings of $F$ into $\R$.
We write $\A_F$ for the ring of adeles of $F$ and $\hat F$ for the subring of finite adeles. Moreover, we put $\A=\A_\Q$.

Let $(V,Q)$ be a non-degenerate quadratic space of dimension $\ell=n+2$ over $F$.
We put $V_{\sigma_i}=V\otimes_{F,\sigma_i}\R$ and identify
$V(\R)=V\otimes_\Q\R=\bigoplus_i V_{\sigma_i}$. We assume that $V$ has signature
\[((n,2),(n+2,0),\dots, (n+2,0)),
\]
that is, $V_{\sigma_1}$ has signature $(n,2)$ and $V_{\sigma_i}$ has signature $(n+2,0)$ for $i=2,\dots ,d$.
Sometimes we will also refer to the quantity
\[
\sig(V)=(n-2,n+2,\dots,n+2)\in \Z^d
\]
as the signature.
Let $\GSpin(V)$ be the `general' Spin group of $V$, that is, the group of all invertible elements $g$ in the even Clifford algebra of $V$ such that $gVg^{-1}=V$.
It is an algebraic group over $F$, and the vector representation gives rise to an exact sequence
\[
1\longrightarrow F^\times\longrightarrow  \GSpin(V)\longrightarrow  \SO(V)\longrightarrow 1.
\]
We consider the algebraic group $H=\Res_{F/\Q}\GSpin(V)$ over $\Q$ given by Weil restriction of scalars. So $H(\Q)$ can be identified with $\GSpin(V)(F)$.

We realize the hermitean symmetric space corresponding to $H$ as  the Grassmannian $\D$ of oriented negative definite $2$-dimensional subspaces of $V_{\sigma_1}$.
Note that $\D$ has two components corresponding to the two possible choices of the orientation.
The complex structure on $\D$ is most easily realized as follows. We let $V_\C=V\otimes_{F,\sigma_1} \C$ and extend the bilinear form $\C$-bilinearly to $V_\C$. The open subset
\begin{align}
\calK=\{ [Z]\in P(V_\C);\; \text{$(Z,Z)=0$ and $(Z,\bar Z)<0$}\}
\end{align}
of the zero quadric of the projective space $P(V_\C)$ of $V_\C$ is isomorphic to $\D$ by mapping $[Z]$ to the subspace $\R\Re(Z)+\R\Im(Z)\subset V_{\sigma_1}$ with the appropriate orientation.

We choose a pair $a,b\in V_{\sigma_1}$ of isotropic vectors such that $(a,b)=1$. The real quadratic space
$V_0:=V_{\sigma_1}\cap a^\perp\cap b^\perp$ has signature $(n-1,1)$. The tube domain
\begin{align}
\calH=\{ z\in V_0\otimes_\R\C;\; \text{$Q(\Im(z))<0$}\}
\end{align}
is isomorphic to $\calK$ by mapping $z\in \calH$ to the class in $P(V_\C)$ of
\begin{align}
w(z)=z+a-Q(z)b.
\end{align}
The domain $\calH$ can be viewed as a generalized complex upper  half plane.
The linear action of $H(\R)$ on $V_\C$ induces an action on $\calH$ by fractional linear transformations.
If $\gamma\in H(\R)$, we have
\begin{align}
\gamma w(z) = j(\gamma, z) w(\gamma z)
\end{align}
for an automorphy factor $j(\gamma,z)=(\gamma w(z), b)$.

The function
\begin{align}
\label{eq:pet1}
Z\mapsto -\frac{1}{2}(Z,\bar Z)=-(Y,Y)=:|Y|^2
\end{align}
on $V_\C$ defines a hermitean metric on the tautological line bundle $\calL$ over $\calK$, where $Y=\Im(Z)$.
Its first Chern form
\begin{align}
\label{eq:omega}
\Omega=-dd^c\log|Y|^2
\end{align}
is $H(\R)$-invariant and positive.  It corresponds to an invariant K\"ahler metric on $\D\cong \calK$
and gives rise to an invariant volume form $d\mu(z)=\Omega^n$.
Note that for $z\in \calH$ we have
\begin{align}
-\frac{1}{2}(w(z),\overline{w(z)})&=-(\Im(z),\Im(z)),\\
|\Im(\gamma z)|^2&= |j(\gamma,z)|^{-2}|\Im(z)|^2 .
\end{align}

For $K\subset H(\hat\Q)$ compact open we consider the Shimura variety
\begin{align}
X_K=X_{K,V}:=H(\Q)\bs (\D\times H(\hat \Q))/K.
\end{align}
It is a quasi-projective variety of dimension $n$ defined over $F$. It is projective if and only if $V$ is anisotropic over $F$.
By our assumption this is always the case when $d>1$. Let $\D^+$ be one of the two components of $\D$. The connected component of the identity $H(\R)^+$ of $H(\R)$ acts on $\D^+$. We let $H(\Q)^+=H(\Q)\cap H(\R)^+$ and write
\begin{align}
\label{decomp1}
H(\hat\Q)=\coprod_{j} H(\Q)^+ h_j K,
\end{align}
as a finite disjoint union with  $h_j\in H(\hat \Q)$. Then we have
\begin{align}
\label{decomp2}
X_K=\coprod_{j} \Gamma_j\bs \D^+,
\end{align}
where $\Gamma_j= H(\Q)^+\cap h_j K h_j^{-1}$. The different components of $X_K$ all have the same finite volume.


\subsection{Modular forms}

We define modular forms for the group $H(\Q)$ as follows.
A function $\Psi$ on $ \calH\times H(\hat\Q)$ is called a meromorphic (holomorphic) modular form of weight $w\in\Z$ and level $K$ if:
\begin{enumerate}
\item[(i)] For fixed $h\in H(\hat\Q)$ the function $\Psi(z,h)$ is meromorphic (holomorphic) in $z\in \calH$,
\item[(ii)]
$\Psi(z,hk)=\Psi(z,h)$ for all $k\in K$,
\item[(iii)]
$\Psi(\gamma z,\gamma h)= j(\gamma,z)^w\Psi(z,h)$ for all $\gamma\in H(\Q)$,
\item[(iv)]
$\Psi$ is meromorphic (holomorphic) at the boundary.
\end{enumerate}

The last condition is trivially fulfilled if $V$ is anisotropic over $F$. By the Koecher principle, it is also automatically fulfilled if the Witt rank of $V$ over $F$ (i.e.~the dimension of a maximal totally isotropic subspace over $F$) is smaller than $n$.
Using the decomposition \eqref{decomp1}, one can define functions $\Psi_j(z):=\Psi(z,h_j)$ on $\D^+$. They are classical modular forms of weight $w$ for the groups $\Gamma_j$. Consequently, the function $\Psi(z,h)$ corresponds to a tuple of classical modular forms $(\Psi_j)$.

The transformation law (iii) can be relaxed by allowing characters or multiplier systems.
By slight abuse of notation, a function $\sigma: H(\Q)\times H(\hat \Q)\to \C^\times$ is called a character for $H(\Q)$, if
$\sigma(\gamma,hk)=\sigma(\gamma,h)$ for all $k\in K$, and
$\sigma(\gamma_1\gamma_2 ,h)= \sigma(\gamma_1,\gamma_2 h)\sigma(\gamma_2,h)$ for all $\gamma_1,\gamma_2\in H(\Q)$.
Then the functions $\sigma_j(\gamma):= \sigma(\gamma,h_j)$ are homomorphisms $\Gamma_j\to \C^\times$.
A function $\Psi$ on $ \calH\times H(\hat\Q)$ is called a modular form of weight $w\in\Z$ and level $K$ with character $\sigma$, if it satisfies besides $(i)$, $(ii)$, $(iv)$
that
\begin{enumerate}
\item[(iii')]
$\Psi(\gamma z,\gamma h)= \sigma(\gamma,h) j(\gamma,z)^w\Psi(z,h)$ for all $\gamma\in H(\Q).$
\end{enumerate}
More generally, one can define modular forms of rational weight $w\in\Q$ with a multiplier system
$H(\Q)\times H(\hat \Q)\to \C^\times$, see e.g.~\cite{Br2}, Chapter 3.3.

Modular forms of weight $w$ can be viewed as sections of the
line bundle $\calM_w$ of modular forms of weight $w$.
The line bundle $\calM_1$ of modular forms of weight $1$ can be defined as the quotient
\[
H(\Q)\bs \calL \times H(\hat\Q)/K
\]
of the tautological bundle $\calL \times H(\hat \Q)$.
The $w$-th power of this bundle is the line bundle of  modular forms of weight $w$.
The hermitean metric \eqref{eq:pet1} on $\calL$ induces a metric on the bundle $\calM_w$ called the Petersson metric.
For a modular form $\Psi$ of weight $w$ it is given by
\begin{align}
\|\Psi(z,h)\|_{Pet}= |\Psi(z,h)|\cdot |y|^{w}.
\end{align}
The first Chern form of the line bundle $\calM_w$ with the Petersson metric is $w\Omega$.

\subsection{Lattices}
\label{sect:lattices}

Let $L\subset V$ be an $\calO_F$-lattice, that is, a finitely generated $\calO_F$-submodule such that $L\otimes_{\calO_F} F=V$.
We assume that $L$ is even, that is, $Q(L)\subset \partial^{-1}$.
Then $Q_\Q(x)=\tr_{F/\Q}Q(x)$ defines an even $\Z$-valued quadratic form on $L$.
Let $L'$ be the $\Z$-dual lattice of $L$ with respect to the quadratic form $Q_\Q$. The following is easily seen.

\begin{proposition}
\label{prop:lattice}
We have
\begin{enumerate}
\item $L'$ is an $\calO_F$-lattice.
\item $L\subset L'$ is a sublattice of finite index.
\item $(L,L')\subset \partial^{-1}$.
\end{enumerate}
\end{proposition}

The finite $\calO_F$-module $L'/L$ is called the discriminant group of $L$.
We write $\hat L= L\otimes_\Z \hat\Z$, where $\hat \Z=\prod_p \Z_p$. We have $L'/L\cong \hat L'/\hat L$. The lattice $L$ is called unimodular if $L'=L$.

Recall that $H(\hat \Q)$ acts on the set of lattices $M\subset V$ by $M\mapsto h M:= (h\hat M)\cap V(F)$.
This action induces an isomorphism $M'/M\to (h M)'/(h M)$, and $hM$ lies in the same genus as $M$.
Throughout we assume that the compact open subgroup $K\subset H(\hat\Q)$ fixes the lattice $L\subset V$ and acts trivially on $L'/L$.

\subsection{Special divisors}

Here we define special divisors on $X_K$ (see e.g.~\cite{Ku:Duke}, \cite{Bo2},
\cite{Br2}). They generalize Heegner divisors on
modular curves.
We follow the description in \cite{Ku:Duke}.

Let $x\in V$ be a vector of totally positive norm. We write
$V_x$ for the orthogonal complement of $x$ in $V$ and $H_x$ for the
stabilizer of $x$ in $H$. So $H_x\cong \Res_{F/\Q}\GSpin(V_x)$. The
sub-Grassmannian
\begin{align*}
\D_{x}=\{ z\in \D;\;\text{$z\perp x$}\}
\end{align*}
defines an analytic divisor on $\D$. For $h\in H(\hat\Q)$ we consider
the natural map
\begin{align*} \label{eqY4.3}
H_x(\Q)\bs \D_x\times H_x(\hat\Q)/(H_x(\hat \Q)\cap hKh^{-1})
\longrightarrow X_K,\quad (z,h_1)\mapsto (z,h_1 h).
\end{align*}
Its image defines a divisor $Z(x,h)$ on $X_K$, which is rational
over $F$.
Let $m\in F$ be totally positive, and let
$\varphi\in S(V(\hat F))^K$ be a $K$-invariant Schwartz function.
If there is an $x_0\in V(F)$ with $Q(x_0)=m$, we define the weighted cycle
\begin{align*}
Z(m,\varphi)= \sum_{h\in H_{x_0}(\hat\Q)\bs  H(\hat\Q)/K } \varphi(h^{-1} x_0) Z(x_0,h).
\end{align*}
The sum is finite, and $Z(m,\varphi)$ is a divisor on $X_K$ with complex coefficients.
If there is no $x_0\in V(F)$ with $Q(x_0)=m$, we put $Z(m,\varphi)=0$.
If $\mu\in L'/L$ is a coset, and $\chi_\mu=\Char(\mu+\hat L) \in S(V(\hat F))^K$ is the characteristic function, we briefly write
\[Z(m,\mu):=Z(m,\chi_\mu).
\]

\section{Theta functions}
\label{sect:3}
\subsection{Special Schwartz functions}
\label{sect:3.1}

We begin by recalling some properties of Schwartz functions on quadratic spaces over $\R$.
Let $(W,Q_W)$ be a real quadratic space of signature $(p,q)$ and write $S(W)$ for the space of Schwartz functions on $W$. Let $\D_W$ be the symmetric domain associated to $\SO(W)$,
realized as the Grassmannian of $q$-dimensional negative definite oriented subspaces of $W$. When $q>0$, then $\D_W$ consists of two components corresponding to the two possible choices of the orientation, and when $q=0$, then $\D_W$ is a point.

Let $\H$ be the upper complex half plane.
Let $\Mp_2(\R)$ be the two-fold metaplectic cover of $\Sl_2(\R)$  realized by the two possible choices of a holomorphic square root of the automorphy factor $c\tau+d$ for $\kabcd\in \Sl_2(\R)$.
Recall that $\Mp_2(\R)$ acts on $S(W)$ via the Weil representation $\omega_W$ associated to the standard additive character  $u\mapsto e(u)$ of $\R$.
Let $\widetilde{\SO}_2(\R)$ be the inverse image of $\SO_2(\R)$ in  $\Mp_2(\R)$, and let
\begin{align}
\label{eq:kalpha}
k_\alpha&= \zxz{\cos(\alpha)}{\sin(\alpha)}{-\sin(\alpha)}{\cos(\alpha)}\in \SO_2(\R)
\end{align}
for $\alpha \in\R$. There is a character $ \chi_{1/2}: \widetilde{\SO}_2(\R)\to \C^\times$ given by
\begin{align}
\label{eq:chi12}
\chi_{1/2}(k_\alpha, \phi)\mapsto \phi(i)^{-1}= \pm e^{i\alpha/2}.
\end{align}
We fix a base point $w_0\in \D_W$, and let
$l\in \frac{1}{2}\Z$.
Let $\varphi\in S(W)$ be a Schwartz function which is invariant under the stabilizer in $\SO(W)$ of $w_0$, and
which is an eigenfunction of weight $l$ for $\widetilde{\SO}_2(\R)$, that is,
$\omega_W(\tilde k)(\varphi)= \chi_{1/2 }^{2l}(\tilde k) \varphi$ for $\tilde k\in \widetilde{\SO}_2(\R)$.
Then we obtain a Schwartz function
\begin{align}
\label{eq:sch}
\varphi(\tau,w,\lambda)=
\phi(i)^{2l}
\big(\omega_W(\tilde g_\tau,h_w)\varphi\big)(\lambda),
\end{align}
depending on $\tau\in \H$ and $w\in \D_W$. Here $\tilde g_\tau=(g_\tau, \phi)\in \Mp_2(\R)$ with $g_\tau (i)=\tau$ and $h_w\in \SO(W)$ with $h_w w_0 = w$.

Every  $w\in \D_W$ defines an orthogonal sum decomposition $W= w^\perp\oplus w$ into a positive definite subspace $w^\perp$ and a negative definite subspace $w$. If $\lambda\in W$ we write $\lambda_{w^\perp}$ and $\lambda_w$  for the corresponding orthogonal projections. The Gaussian associated to $w$ is the Schwartz function
\[
\varphi_0^W(w,\lambda)= \exp\left(-2\pi Q_W(\lambda_{w^\perp})+2\pi Q_W(\lambda_{w})\right).
\]
It has weight $(p-q)/2$. Using the explicit formulas for the Weil representation, it is easily seen that the corresponding Schwartz function $\varphi_0^W(\tau , w,\lambda)$ as in \eqref{eq:sch} is given by
\[
\varphi_0^W(\tau , w,\lambda)= \Im(\tau)^{q/2}
e\big( Q(\lambda_{w^\perp})\tau+ Q(\lambda_{w})\bar\tau\big) .
\]


Kudla and Millson
constructed  a Schwartz form $\varphi_{KM}^W$ on $W$ taking values
in $A^q(\D_W)$, the differential $q$-forms on $\D_W$ (see e.g.~\cite{KM3}, and \cite{BF} Section 4). We will require the following properties of this form.
With respect to the natural action, we have the invariance
\begin{equation*}
\varphi_{KM}^W (w,\lambda)\in [ A^q(\D_W) \otimes S(W)]^{\SO(W)},
\end{equation*}
and $\varphi_{KM}^W(w,\lambda)$ is closed for all $\lambda \in W$.
We have $\varphi_{KM}^W (w,\lambda)= P_{KM}(w,\lambda) \varphi_{0}^W (w,\lambda)$, where
$P_{KM}(w,\lambda)$ is a polynomial  on $W$
taking values
in $A^q(\D_W)$.
Under the action of the Weil representation,  the function $\varphi_{KM}^W(\lambda)$ has weight $(p+q)/2$. We also have the corresponding Schwartz function
\begin{align}
\label{eq:kmex}
\varphi_{KM}^W(\tau,w,\lambda)=
P_{KM}(w,\sqrt{\Im(\tau)}\lambda) e\big( Q(\lambda_{w^\perp})\tau+ Q(\lambda_{w})\bar\tau\big) .
\end{align}
When $q=0$, so that $W$ is positive definite, $\varphi_{KM}^W$ is simply the Gaussian $\varphi^W_0$.

In the present paper we only need the hermitean case $q=2$, for which a convenient construction of  $\varphi_{KM}^W$ is given in \cite{Ku:Integrals}, Section 4.
We have that $\varphi_{KM}^W(0)=-\Omega$, where $\Omega$ is the negative of the invariant  K\"ahler form $\Omega$ on $\D_W$ defined in \eqref{eq:omega}.
The following relationship of $\varphi_{KM}^W$ and the Gaussian is obtained in \cite{BF}, Theorem 4.4.

\begin{proposition}
\label{prop:BF}
Assume that $q=2$ so that $\D_W$ is hermitean. We have
\[
dd^c \varphi_{0}^W(\tau,w,\lambda)=-L \varphi_{KM}^W(\tau,w,\lambda),
\]
where $L=-2i \Im(\tau)^2 \frac{\partial}{\partial\bar{\tau}}$ denotes the Maass lowering operator, and $d$ and $d^c=\frac{1}{4\pi i}(\partial -\bar \partial)$ are the usual differentials on $\D_W$.
\end{proposition}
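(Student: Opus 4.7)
The plan is to verify the identity by an explicit computation, exploiting the $\widetilde{\SO}_2(\R) \times \SO(W)$-equivariance to reduce the problem to a calculation at the base point $(\tau, w) = (i, w_0)$. Since the Maass lowering operator $L$ reduces the Weil-representation weight by two, the right-hand side has weight $(p+2)/2 - 2 = (p-2)/2$, matching the weight $(p-q)/2$ of the left. Hence both sides transform in the same way under $\widetilde{\SO}_2(\R) \times \SO(W)$, and it suffices to verify equality at this base point together with the necessary finite jet in $w$.

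First I would insert the explicit presentation of the Kudla--Millson form from \cite{Ku:Integrals}, Section~4: one has $\varphi_{KM}^W(\tau,w,\lambda) = P_{KM}(w,\sqrt{v}\,\lambda)\, e\bigl(Q(\lambda_{w^\perp})\tau + Q(\lambda_w)\bar\tau\bigr)$, where $P_{KM}$ is a $(1,1)$-form valued polynomial that reduces at $w_0$ to a simple quadratic expression in the coordinates of $\lambda_{w^\perp}$ and $\lambda_w$. Applying $L = -2iv^2\,\partial/\partial\bar\tau$ produces two contributions: one from differentiating the exponential in $\bar\tau$, which pulls down a factor proportional to $Q(\lambda_w)$, and one from differentiating the $\sqrt v$-dependence inside $P_{KM}$. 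Reorganizing these in terms of $\lambda_w$ and $\lambda_{w^\perp}$ and factoring out the Gaussian yields a $(1,1)$-form valued polynomial times $\varphi_0^W$.

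On the other side, I would compute $dd^c \varphi_0^W$ in holomorphic coordinates on the tube domain model $\calH$ of $\D_W$ introduced in Section~\ref{sect2}. The $w$-dependence of $\varphi_0^W(\tau,w,\lambda)$ is entirely encoded in the orthogonal projections $\lambda_{w^\perp}$ and $\lambda_w$; differentiating in the holomorphic coordinates of $\calH$ brings down linear forms in $\lambda$ multiplied by $\tau$ or $\bar\tau$, and applying $\bar\partial$ produces quadratic expressions in $\lambda$ with coefficients linear in $v$. Dividing by $4\pi i$ and comparing term by term with the previous computation of $-L\varphi_{KM}^W$ should give agreement.

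The main obstacle is the bookkeeping of signs, normalizations, and the precise form of $P_{KM}$: both $L\varphi_{KM}^W$ and $dd^c \varphi_0^W$ assemble out of several contributions of different origin, and agreement is only visible once they are fully collected. A conceptually cleaner alternative is to argue via Howe duality for the pair $(\mathfrak{sl}_2, \mathfrak{so}(W))$: both $dd^c$ on $\D_W$ and $L$ on $\H$ correspond, on the Weil representation, to the actions of dual Lie-algebra elements, and $\varphi_{KM}^W$ is by construction the image of $\varphi_0^W$ under the $\SO(W)$-raising operator used in \cite{KM3}. The identity then becomes a reformulation of the commutation relation between this raising operator and the $\mathfrak{sl}_2$-lowering operator $L$, up to the sign and normalization in the statement.
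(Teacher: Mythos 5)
The paper does not prove this proposition at all: it is quoted verbatim from Bruinier--Funke \cite{BF}, Theorem 4.4, whose proof is exactly the kind of argument you outline --- insert the explicit expression for $\varphi_{KM}^W$ (as in \cite{Ku:Integrals}, Section 4), use the $\widetilde{\SO}_2(\R)\times\SO(W)$-equivariance of both sides, and compare $-L\varphi_{KM}^W$ with $dd^c\varphi_0^W$ term by term in coordinates near the base point. Your sketch is therefore correct in approach and matches the proof of the cited result; the only caveat is that the "verification at the base point" must indeed, as you note, be carried out on the full $2$-jet in $w$, i.e.\ with the explicit coordinate expressions, since $dd^c$ is second order in $w$.
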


\subsection{Discriminant forms and the Weil representation}
\label{sect:3.2}

We now come back to our global quadratic space $V$ over the totally real field $F$ as in Section~\ref{sect2}.
Let $\H$ be the upper complex half plane.
We use $\tau=(\tau_1,\dots,\tau_d)$ as a standard variable on $\H^d$ and put $u_i=\Re(\tau_i)$, $v_i=\Im(\tau_i)$. For a $d$-tuple $(w_1,\dots,w_d)$ of complex numbers, we put
$\tr(w)= \sum_{i} w_i$ and
$\norm(w)= \prod_{i} w_i$.

We view $\C^d$ as a $\R^d$-module by putting $\lambda w=(\lambda_1 w_1,\dots,\lambda_d w_d)$ for $\lambda=(\lambda_1,\dots,\lambda_d)\in \R^d$.
For $x\in F$ we briefly write $x_i=\sigma_i(x)$ and identify $x$ with its image $(x_1,\dots,x_d)\in \R^d$. The usual trace and norm of $x$ coincide with the above definitions. Moreover, the inclusion $F\to\R^d$ defines an $F$-vector space structure on $\C^d$.

We are interested in certain  vector valued modular forms for the Hilbert modular group associated to $F$. Let
$G=\Res_{F/\Q} \Sl_2$. For $g\in \Sl_2(F)\cong G(\Q)$, we briefly write $g_i=\sigma_i(g)$. So the image of $g$ in $G(\R)\cong \Sl_2(\R)^d$ is given by $(g_1,\dots,g_d)$.
The group $G(\R)$ acts on $\H^d$ by fractional linear transformations.
We denote by $\tilde G_\A$ the twofold metaplectic cover of $G(\A)$. Let  $\tilde G_\R$ be the full inverse image  in $\tilde G_\A$ of $G(\R)$.
We will frequently realize $\tilde G_\R$ as the group of pairs
\[
\left( g, \phi(\tau)\right),
\]
where $g=\kabcd\in G(\R)$ and $\phi(\tau)$ is a holomorphic function on $\H^d$ such that $\phi(\tau)^2=\norm(c\tau+d)$.
The product of $( g_1, \phi_1(\tau))$, $( g_2, \phi_2(\tau))$ is given by
\[
 ( g_1, \phi_1(\tau))\, ( g_2, \phi_2(\tau)) = (g_1g_2, \phi_1(g_2\tau)\phi_2(\tau)).
\]
Let $\tilde \Gamma$ be the full inverse image in $\tilde G_\R$ of the Hilbert modular group
\[
\Gamma=\Sl_2(\calO_F)\subset G(\R).
\]
It follows from Vaserstein's theorem that $\tilde\Gamma$ is generated by the elements
\begin{align*}
T_b &= \left( \zxz{1}{b}{0}{1 }, 1\right), \qquad b\in \calO_F,\\
S &  = \left( \zxz{0}{-1}{1}{0 }, \sqrt{\norm(\tau)}\right),\\
N &  =  \left( \zxz{1}{0}{0}{1 }, -1 \right).
\end{align*}
We also put
$Z = \left(  \kzxz{-1}{0}{0}{-1 }, i^d \right)$.
Observe that we have the relations
$(ST)^3=S^2=Z $. If $d$ is odd then $Z^2=N$, and if $d$ is even then $Z^2=N^2=1$. There are further relations corresponding to the elliptic fixed points of $\Gamma$.

Let $L\subset V$ be an $\calO_F$-lattice.
For $\mu \in L'/L$ we write $\chi_\mu=\Char(\mu+\hat L)\in S(V(\hat F))$ for the characteristic function of the coset.
Associated to the reductive dual pair $(\Sl_2, \Orth(V))$ there is a Weil representation $\omega=\omega_\psi$ of $\tilde G_\A$ on the Schwartz space $S(V(\A_F))$, where $\psi$ is the standard additive  character of $F\bs \A_F$ with $\psi_\infty (x)= e(\tr x)$ \cite{We1}.
The subspace
\[
S_L= \bigoplus_{\mu\in L'/L}  \C\chi_\mu \subset S(V(\hat F))
\]
is preserved by the action of $\widetilde{\Sl}_2(\hat \calO_F )$, the full inverse image in $\tilde G_\A$ of $\Sl_2(\hat\calO_F)\subset G(\hat \Q)$.
The canonical splitting $G(F)\to  \tilde G_\A$ defines a homomorphism
\[
\tilde \Gamma \longrightarrow   \widetilde{\Sl}_2(\hat \calO_F ), \quad \gamma \mapsto \hat \gamma,
\]
where $\hat \gamma$  is the unique element such that $\gamma \hat \gamma$ is in the image of $G(F)$.
This induces a representation $\rho_L$ of $\tilde \Gamma$ on $S_L$ by
\[
\rho_L(\gamma)\varphi = \bar \omega(\hat \gamma),\quad \gamma\in \tilde\Gamma.
\]

In terms of the above generators of $\tilde \Gamma$ the representation $\rho_L$ is given by
%
\begin{align}
\label{eq:weilt}
\rho_L(T_b)(\chi_\mu)&=e\left(\tr (Q(\mu)b) \right)\chi_\mu, \qquad b\in \calO_F,\\
\label{eq:weils}
\rho_L(S)(\chi_\mu)&=\frac{e(-\tr(\sig V)/8)}{\sqrt{|L'/L|}} \sum_{\nu\in
L'/L} e\left(-\tr(\mu,\nu)\right) \chi_\nu,\\
\label{eq:weiln}
\rho_L(N)(\chi_\mu)&=(-1)^\ell \chi_{\mu}.
\end{align}
Note that this is compatible with the conventions in \cite{Bo2}, \cite{Ku:Integrals}, \cite{Br2}, where the case $F=\Q$ is considered.
We also have the useful formula
$\rho_L(Z)(\chi_\mu)=e(-\tr(\sig V)/4) \chi_{-\mu}$.
For a totally positive unit $\eps\in \calO_F^\times$ the element $m(\eps)=(\kzxz{\eps}{0}{0}{\eps^{-1}}, 1)$ acts by
\begin{align}
\label{eq:weilme}
\rho_L(m(\eps))(\chi_\mu)&=\chi_{\eps^{-1}\mu}.
\end{align}
%
We denote the standard $\C$-bilinear pairing on $ S_L$ (the $L^2$ bilinear pairing) by
\begin{align}
\langle a,b\rangle=\sum_{\mu\in L'/L} a_\mu b_\mu
\end{align}
for $a,b \in S_L$.
The representation $\rho_L$ is unitary, that is, we have $\langle \bar\rho_L a,\rho_L b\rangle = \langle a,b\rangle$.
It is well known that $\rho_L$ factors through a finite quotient of $\tilde\Gamma$.

\subsection{Siegel theta functions}

Next, we define Siegel theta functions for the lattice $L$.
For the global quadratic space $V$ over $F$, we obtain the Gaussian on $V(\R)$ by piecing together the local data.
For $\lambda=(\lambda_1,\dots,\lambda_d)\in
V(\R)$ and $z\in \D$ we define
\[
\varphi_0(z,\lambda) = \varphi_0^{V_{\sigma_1}}(z,\lambda_1)\otimes \varphi_0^{V_{\sigma_2}}(\lambda_2)\otimes \dots \otimes \varphi_0^{V_{\sigma_d}}(\lambda_d).
\]
By our assumption on the signature of $V$, it has weight $(\frac{n-2}{2},\frac{n+2}{2}, \dots, \frac{n+2}{2})$.
We obtain the corresponding Schwartz function
\[
\varphi_0(\tau, z,\lambda) = \varphi_0^{V_{\sigma_1}}(\tau_1, z,\lambda_1)\otimes \varphi_0^{V_{\sigma_2}}(\tau_2,\lambda_2)\otimes \dots \otimes \varphi_0^{V_{\sigma_d}}(\tau_d,\lambda_d)
\]
for $\tau\in \H^d$. It can be explicitly described as follows.
We put
\[
Q(\lambda) = (Q(\lambda_1),\dots ,Q(\lambda_d))\in \R^d.
\]
If $z\in \D$ we let $\lambda_{1z}$ (respectively $\lambda_{1z^\perp}$) be the orthogonal projection of $\lambda_1$ to $z$ (respectively to $z^\perp$). Moreover, we put
\begin{align*}
\lambda_z&=(\lambda_{1z},0,\dots,0),\\
\lambda_{z^\perp} &=(\lambda_{1z^\perp},\lambda_2,\dots,\lambda_d).
\end{align*}
Hence
\[
\lambda \mapsto \tr Q(\lambda_{z^\perp})-\tr Q(\lambda_{z})
\]
defines a positive definite quadratic form on $V(\R)$, the majorant associated to $z$. It is easily seen that
\[
\varphi_0(\tau, z,\lambda) = v_1 e\big( \tr Q(\lambda_{z^\perp})\tau+ \tr Q(\lambda_{z})\bar\tau\big).
\]
In particular, this Schwartz function is non-holomorphic in $\tau_1$, but holomorphic in $\tau_2,\dots,\tau_d$.

We identify $V(F)$ with its image under the canonical embedding into $V(\R)$.
Let $ \varphi_f\in S(V(\hat F))$ be a Schwartz-Bruhat function. For $\tau\in \H^d$, $z\in \D$, and $h\in H(\hat \Q)$, we define the Siegel theta function associated to $\varphi_f$ by
\begin{align}
\label{theta2}
\theta_S(\tau,z,h;\varphi_f)  &=\sum_{\lambda\in V(F)}\varphi_f(h^{-1} \lambda)\varphi_0(\tau,z,\lambda)\\
\nonumber
&= v_1  \sum_{\lambda\in V(F)}\varphi_f(h^{-1} \lambda)
e\big( \tr Q(\lambda_{z^\perp})\tau+ \tr Q(\lambda_{z})\bar\tau\big) .
\end{align}
It satisfies the transformation formulas
\begin{align}
\label{eq:thetaH}
\theta_S(\tau,\gamma z,\gamma h;\varphi_f)&= \theta_S(\tau,z,h;\varphi_f), \quad \gamma\in H(\Q),\\
\label{eq:thetaG}
\theta_S(\gamma \tau,z,h;\varphi_f)&= (c_1\tau_1+d_1)^{-2}\phi(\tau)^\ell \theta_S( \tau,z,h;\omega(\hat \gamma^{-1})\varphi_f),\quad \gamma=(\kabcd,\phi)\in \tilde \Gamma.
\end{align}
%
%
%
%
We will also consider the $ S_L$-valued theta function
\begin{align}
\label{theta5}
\Theta_S(\tau,z,h)=\sum_{\mu\in L'/L}\theta_S(\tau,z,h;\chi_\mu)\chi_\mu.
\end{align}
It has the following transformation formula, which can be deduced from \eqref{eq:thetaG} or by applying the Poisson summation formula as in \cite{Bo2}.

\begin{theorem}
\label{thm:theta}
For $\gamma=\left( g, \phi(\tau)\right)\in \tilde\Gamma$ with $g=\kabcd$,  we have
\[
\Theta_S(\gamma\tau,z,h) = (c_1\tau_1+d_1)^{-2}\phi(\tau)^\ell\rho_L(\gamma)\Theta_S(\tau,z,h).
 \]
\hfill
$\square$
\end{theorem}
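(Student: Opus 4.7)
The plan is to reduce the vector-valued identity to the scalar-valued transformation law \eqref{eq:thetaG} applied coset by coset, and then to reinterpret the resulting sum as the action of $\rho_L(\gamma)$ on $\Theta_S(\tau,z,h)$. This is the more economical of the two routes suggested in the text, since the scalar transformation has already been recorded.

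First I would apply \eqref{eq:thetaG} to $\varphi_f=\chi_\mu$ for each $\mu\in L'/L$, obtaining
\[
\theta_S(\gamma\tau,z,h;\chi_\mu) = (c_1\tau_1+d_1)^{-2}\phi(\tau)^\ell\,\theta_S(\tau,z,h;\omega(\hat\gamma^{-1})\chi_\mu).
\]
Tensoring with $\chi_\mu$ and summing over $\mu$ gives
\[
\Theta_S(\gamma\tau,z,h) = (c_1\tau_1+d_1)^{-2}\phi(\tau)^\ell \sum_{\mu\in L'/L}\theta_S\bigl(\tau,z,h;\omega(\hat\gamma^{-1})\chi_\mu\bigr)\,\chi_\mu.
\]
It then suffices to identify the sum on the right with $\rho_L(\gamma)\Theta_S(\tau,z,h)$. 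Writing $\omega(\hat\gamma^{-1})\chi_\mu=\sum_\nu c_{\mu\nu}\chi_\nu$ and $\rho_L(\gamma)\chi_\nu=\sum_\mu \tilde c_{\nu\mu}\chi_\mu$, linearity of $\theta_S(\tau,z,h;\cdot)$ reduces the identity to the matrix equality $c_{\mu\nu}=\tilde c_{\nu\mu}$. Since $\{\chi_\mu\}$ is orthonormal for the $L^2$-pairing on $S_L$ and $\omega$ is unitary on $\widetilde{\Sl}_2(\hat\calO_F)$, the matrix of $\omega(\hat\gamma^{-1})=\omega(\hat\gamma)^*$ in this basis is the conjugate transpose of the matrix of $\omega(\hat\gamma)$; combined with the definition $\rho_L(\gamma)=\bar\omega(\hat\gamma)$ given in Section \ref{sect:3.2}, this is exactly the desired relation $c_{\mu\nu}=\tilde c_{\nu\mu}$.

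As a sanity check and an independent route, one may verify the formula directly on the three generators $T_b$, $S$, $N$ of $\tilde\Gamma$ listed at the beginning of Section \ref{sect:3.2}, in which case the statement reduces to \eqref{eq:weilt}, \eqref{eq:weils}, and \eqref{eq:weiln}, respectively. For $T_b$ this amounts to the elementary computation $e(\tr Q(\lambda)b)=e(\tr Q(\mu)b)$ for $\lambda\in\mu+\hat L$ (using $Q(L)\subset\partial^{-1}$ from Proposition \ref{prop:lattice}); for $N$ it amounts to the involution $\lambda\mapsto-\lambda$ on $V(F)$ together with the parity $(-1)^\ell$; for $S$ one applies Poisson summation to each coset $\mu+\hat L$ in \eqref{theta2}, producing the Gauss-sum factor together with the signature-dependent phase $e(-\tr(\sig V)/8)$ and the $\sqrt{|L'/L|}^{-1}$ normalization. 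The main bookkeeping obstacle in either approach is the tracking of this Weil-index phase, which depends in a delicate way on the prescribed signature $((n,2),(n+2,0),\dots,(n+2,0))$ of $V$ and on the conjugation built into the definition $\rho_L=\bar\omega\circ\hat{\ }$. Once the Poisson-summation step for $S$ is carried out and its phase matched with \eqref{eq:weils}, multiplicativity in $\gamma$ extends the identity from generators to all of $\tilde\Gamma$.
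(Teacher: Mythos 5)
Your proposal is correct and follows essentially the route the paper itself takes: the deduction from \eqref{eq:thetaG}, using $\rho_L(\gamma)=\bar\omega(\hat\gamma)$ together with unitarity of the Weil representation to match the matrices coefficientwise, is exactly the first of the two arguments the paper indicates for this theorem. Your sanity-check via the generators $T_b$, $S$, $N$ and Poisson summation coincides with the paper's second, Borcherds-style route, so nothing genuinely different is being done.
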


The following growth estimate will be used later. It is proved in the same way as the corresponding statement for holomorphic Hilbert modular forms.

\begin{proposition}
\label{prop:thetagrowth}
The Siegel theta function satisfies uniformly in $u$ that
\[
\theta_S(\tau,z,h;\chi_\mu)=O\left(v_1 N(v)^{-(n+2)/2}\right),\quad v_i\to 0.
\]
\end{proposition}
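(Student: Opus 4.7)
My plan is to pass to absolute values inside the theta sum and then reduce the statement to a bound on a Gaussian lattice sum, which I handle by Poisson summation. Taking absolute values immediately eliminates the $u$-dependence of every term, so uniformity in $u$ will be automatic.

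From the explicit formula \eqref{theta2},
\[
\bigl| \theta_S(\tau,z,h;\chi_\mu) \bigr| \le v_1 \sum_{\lambda \in h(\mu+L)} \exp\bigl(-2\pi R_v(\lambda)\bigr),
\]
where $R_v(\lambda) = \sum_{i=1}^{d} v_i \bigl( Q(\lambda_{iz^\perp}) - Q(\lambda_{iz}) \bigr)$. Because $z$ is negative definite in $V_{\sigma_1}$ and $V_{\sigma_i}$ is positive definite for $i\ge 2$, the restriction of $R_v$ to $V_{\sigma_i}$ is $v_i$ times a fixed positive definite majorant $R_i$ of rank $n+2$. Hence $R_v$ is positive definite on $V(\R)$, and its Gram determinant in any fixed $\Z$-basis of $h(\mu+L)$ is a positive constant, depending on $z$, $h$ and the basis but not on $v$, times $\prod_i v_i^{n+2} = N(v)^{n+2}$. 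It therefore suffices to establish
\[
\sum_{\lambda \in h(\mu+L)} \exp\bigl(-2\pi R_v(\lambda)\bigr) = O\bigl( N(v)^{-(n+2)/2} \bigr) \qquad \text{as } v_i \to 0.
\]

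This is standard Gaussian-theta asymptotics. Poisson summation on the shifted lattice $h(\mu+L) \subset V(\R)$ rewrites the left hand side as
\[
\frac{1}{\operatorname{vol}(V(\R)/hL)}\, \det(R_v)^{-1/2} \sum_{\lambda^* \in (hL)^*} \exp\bigl(-\tfrac{\pi}{2} R_v^{-1}(\lambda^*)\bigr)\, e(\lambda^*\cdot\mu),
\]
up to the Jacobian of the reference metric. As $v_i \to 0$ the inverse form $R_v^{-1}$ blows up on every nonzero vector of $(hL)^*$, so the dual series is dominated uniformly by its $\lambda^*=0$ contribution. What remains is $\det(R_v)^{-1/2} \asymp N(v)^{-(n+2)/2}$, which together with the $v_1$ prefactor gives the claim.

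The main (and only mild) obstacle is bookkeeping: one must verify that the Gram determinant of $R_v$ scales exactly as $N(v)^{n+2}$ (which follows from $R_v|_{V_{\sigma_i}} = v_i R_i$ and $\dim V_{\sigma_i} = n+2$), and that the tail of the Poisson-dual sum is absorbed by the $\lambda^*=0$ term uniformly on any truncated region $0 < v_i < v_i^{(0)}$. Neither is difficult; the argument then runs in parallel with the proof for holomorphic Hilbert modular forms, the only substantive new input being the positive definiteness of $R_v$, which is precisely where the signature hypothesis on $V$ enters.
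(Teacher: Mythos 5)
Your argument is correct and is exactly the standard estimate the paper invokes without writing out (``proved in the same way as the corresponding statement for holomorphic Hilbert modular forms''): take absolute values termwise, which removes all $u$-dependence, reduce to a Gaussian sum over the rank-$d(n+2)$ $\Z$-lattice underlying $hL$ with the positive definite majorant $R_v$ whose determinant scales like $N(v)^{n+2}$, and conclude by Poisson summation, the nonzero dual terms being uniformly negligible as $v_i\to 0$. The only cosmetic slip is that the fixed $\Z$-basis should be taken of the lattice $hL$ rather than of the coset $h(\mu+L)$; this affects nothing.
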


\subsection{Kudla--Millson theta functions}
\label{sect:3.4}

For $\lambda=(\lambda_1,\dots,\lambda_d)\in
V(\R)$ and $z\in \D$ the Kudla--Millson Schwartz form on $V(\R)$ is defined by
\[
\varphi_{KM}(z,\lambda)=\varphi_{KM}^{V_{\sigma_1}}(z,\lambda_1)\otimes \varphi_{KM}^{V_{\sigma_2}}(\lambda_2)\otimes \dots \otimes \varphi_{KM}^{V_{\sigma_d}}(\lambda_d).
\]
It has parallel weight $\ell/2=(n+2)/2$.
We also have the corresponding Schwartz form $\varphi_{KM}(\tau,z,\lambda)$ for $\tau\in \H^d$. It is non-holomorphic in $\tau_1$, but holomorphic in $\tau_2,\dots,\tau_d$.

Let $ \varphi_f\in S(V(\hat F))$ be a Schwartz-Bruhat function. For $\tau\in \H^d$, $z\in \D$, and $h\in H(\hat \Q)$,  the Kudla--Millson theta function associated to $\varphi_f$ is given by
\begin{align}
\label{thetakm2}
\theta_{KM}(\tau,z,h;\varphi_f)  &=\sum_{\lambda\in V(F)}\varphi_f(h^{-1} \lambda)\varphi_{KM}(\tau,z,\lambda),
\end{align}
see \cite{KM1}, \cite{KM2}, \cite{KM3} for details.
Its geometric significance lies in the fact  that the Fourier coefficients
with totally positive index $m$ are Poincar\'e dual forms for the cycles $Z(m,\varphi_f)$ (however, we do not need that here).

The characteristic functions of the cosets of $L$ can be used  to define the  $ S_L$-valued valued Kudla--Millson theta function
\begin{align}
\label{thetakm5}
\Theta_{KM}(\tau,z,h)=\sum_{\mu\in L'/L}\theta_{KM}(\tau,z,h;\chi_\mu)\chi_\mu.
\end{align}
In the variable $(z,h)$ it defines a closed $2$-form on $X_K$. In $\tau$ it satisfies the transformation formula
\[
\Theta_{KM}(\gamma\tau,z,h) = \phi(\tau)^{\ell}\rho_L(\gamma)\Theta_{KM}(\tau,z,h),
 \]
for $\gamma=\left( g, \phi(\tau)\right)\in \tilde\Gamma$.
Hence it is a non-holomorphic vector valued Hilbert modular form for $\tilde \Gamma$ of parallel weight $\ell/2$
with representation $\rho_L$. Since it has moderate growth at the cusps, it can be integrated against cusp forms.

\begin{proposition}
\label{prop:thetakmgrowth}
The Kudla--Millson theta function satisfies uniformly in $u$ that
\[
\theta_{KM}(\tau,z,h;\chi_\mu)=O\left(N(v)^{-(n+2)/2}\right),\quad v_i\to 0.
\]
\end{proposition}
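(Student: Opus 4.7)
The plan is to follow the strategy of Proposition~\ref{prop:thetagrowth}, adapting the proof for the Siegel theta function to the KM setting. The key input is Poisson summation applied to the $\Z$-lattice $hL \subset V(\R)$ underlying the theta series. First, using the translation formula $\Theta_{KM}(\tau+b,z,h)=\rho_L(T_b)\Theta_{KM}(\tau,z,h)$ for $b \in \calO_F$ together with the unitarity of $\rho_L$, one reduces to the case where $u$ lies in a fixed compact fundamental domain for the action of $\calO_F$ on $\R^d$ by translation; then ``uniformly in $u$'' becomes uniformity over a compact set.

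Next I would apply Poisson summation directly to the defining sum
\[
\theta_{KM}(\tau,z,h;\chi_\mu)=\sum_{\lambda \in hL + h\mu}\varphi_{KM}(\tau,z,\lambda).
\]
This is essentially the computation that yields the transformation formula under $S$ for the KM theta, analogous to Theorem~\ref{thm:theta} for the Siegel theta. At each archimedian place $\sigma_i$, the Fourier transform of the complex Gaussian factor in $\varphi_{KM}^{V_{\sigma_i}}$ produces a prefactor of modulus $|\tau_i|^{-(n+2)/2}$ together with a dual complex Gaussian of width of order $1/\tau_i$; the polynomial factor $P_{KM}(z,\sqrt{v_1}\lambda_1)$ is converted into a polynomial in the dual variable $\xi$ of the same total degree.

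Bounding the resulting dual sum over $hL'$ via the standard Jacobi convexity estimate $\sum_{\xi\in L'} e^{-\pi\alpha(\xi,\xi)}\ll \alpha^{-\operatorname{rank}(L')/2}$ (valid as $\alpha\to 0^+$), applied with $\alpha$ of order $v_i/|\tau_i|^2$ at the $i$-th place, gives
\[
|\theta_{KM}(\tau,z,h;\chi_\mu)| \ll |N(\tau)|^{-(n+2)/2}\cdot \prod_{i=1}^d\bigl(|\tau_i|^2/v_i\bigr)^{(n+2)/2} = |N(\tau)|^{(n+2)/2}\, N(v)^{-(n+2)/2}.
\]
Since $u$ is confined to a compact set, $|N(\tau)|=\prod_i\sqrt{u_i^2+v_i^2}$ stays bounded as $v_i\to 0$, yielding the stated estimate $O(N(v)^{-(n+2)/2})$ uniformly in $u$. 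The main subtlety I expect is the careful tracking of the polynomial factor $P_{KM}$ through the Fourier transform: one must verify that the resulting polynomial in $\xi$, once multiplied against the shifted dual Gaussian, only contributes bounded extra factors. This holds because the Fourier transform sends a polynomial times a Gaussian to a polynomial times a Gaussian of the same width, and the accompanying extra powers of $|\tau_i|$ are absorbed into the constant once $u$ is restricted to a compact set.
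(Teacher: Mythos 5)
Your overall strategy -- reduce to $u$ in a compact set by translation invariance, apply Poisson summation (equivalently, the weight-$\kappa$ transformation under $S$) place by place, and bound the dual theta sum -- is a legitimate way to prove the proposition; the paper itself gives no detailed argument here (it is meant to be proved like Proposition \ref{prop:thetagrowth}, i.e.\ by the classical growth argument for Hilbert modular forms of weight $\kappa$), so a direct lattice-sum estimate is perfectly acceptable. However, as written your estimate chain has a genuine flaw. You invoke the bound $\sum_{\xi}e^{-\pi\alpha(\xi,\xi)}\ll\alpha^{-\operatorname{rank}/2}$, valid as $\alpha\to 0^{+}$, with $\alpha_i=v_i/|\tau_i|^2$; but on your compact $u$-range this quantity does \emph{not} stay small: at $u_i=0$ one has $\alpha_i=1/v_i\to\infty$, where the dual sum is $O(1)$ and not $O(\alpha_i^{-\operatorname{rank}/2})$. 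As a result the displayed inequality $|\theta_{KM}|\ll|N(\tau)|^{(n+2)/2}N(v)^{-(n+2)/2}$ is actually false near $u=0$: at $\tau=iv$ it would give $O(1)$, whereas the theta function there has size of order $N(v)^{-(n+2)/2}$ (under $S$ it tends to the nonzero constant term coming from $\varphi_{KM}(0)=-\Omega$). The repair is short: use $\sum_{\xi}e^{-\pi\alpha(\xi,\xi)}\ll 1+\alpha^{-\operatorname{rank}/2}$ together with $v_i\le|\tau_i|\le C$ on the compact $u$-range, so that each place contributes $|\tau_i|^{-(n+2)/2}\bigl(1+(|\tau_i|^2/v_i)^{(n+2)/2}\bigr)\ll v_i^{-(n+2)/2}$, which is exactly the assertion.

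A second point needs tightening: the claim that the polynomial factor only produces ``bounded extra factors \dots absorbed into the constant once $u$ is restricted to a compact set'' is not the right justification, since $|\tau_i|$ can tend to $0$ on that compact set and negative powers of $|\tau_i|$ could not be absorbed. What actually saves you is that, because $\varphi_{KM}$ has weight $(n+2)/2$, its Fourier transform is (up to a constant of modulus one) $\tau_1^{-(n+2)/2}\varphi_{KM}(-1/\tau,z,\xi)$, so the polynomial reappears evaluated at $\sqrt{v_1}\,\xi_1/|\tau_1|$, exactly matched to the width $v_1/|\tau_1|^2$ of the dual Gaussian, and can be absorbed into a Gaussian with slightly smaller exponent uniformly in $\tau$. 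With these two corrections your proof closes. You might also note a shorter route: taking absolute values termwise in the defining sum removes the $u$-dependence outright (so uniformity in $u$ is automatic and no reduction to a compact set is needed), one bounds $|P_{KM}(z,\sqrt{v_1}\lambda_1)|e^{-2\pi v_1 R_1(\lambda_1)}\ll e^{-\pi v_1 R_1(\lambda_1)}$ with $R$ the majorant, and the proposition reduces to the single estimate $\sum_{\lambda}e^{-\pi\tr(R(\lambda)v)}\ll N(v)^{-(n+2)/2}$, which follows from one Poisson summation or a comparison with an integral.
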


\section{Whittaker forms}
\label{sect:4}

Following Harvey, Moore, and Borcherds (see \cite{HM} and \cite{Bo2}) we would like to construct automorphic forms on $X_K$ with singularities along special cycles by integrating weakly holomorphic modular forms or weak Maass forms against the Siegel theta function.
Unfortunately,  because of the Koecher principle, there are no such automorphic forms with singularities at the cusps when $d>1$.

Here we overcome this problem by viewing weak Maass forms as {\em formal\/} Poincar\'e series ignoring the issue of convergence. Then the theta integral can be formally unfolded leading to an integral over $\tilde \Gamma_\infty \bs \H^d$, where
\begin{align}
\tilde\Gamma_\infty:=\{T_b\cdot (1,\pm 1);\quad b\in \calO_F\}
\end{align}
is the subgroup of translations of $\tilde \Gamma$.
It turns out that such integrals still make sense when they are suitably regularized. It is natural to consider them for translation invariant functions which are eigenfunctions for the Laplacians. This leads to the definition of Whittaker forms below. Moreover, we define weakly holomorphic Whittaker forms, which serve as substitutes of weakly holomorphic modular forms.
We derive some properties of these functions which will be important later.

We begin by fixing some notation.
Let $k=(k_1,\dots,k_d)\in (\frac{1}{2}\Z)^d$ be a weight. Throughout we assume that $k\equiv (\frac{\ell}{2},\dots,\frac{\ell}{2})\pmod{\Z^d}$.
%
We define a Petersson slash operator in weight $k$ for the representation $\rho_L$ on functions $f:\H^d\to S_L$ by
\[
(f\mid_{k,\rho_L}(g,\phi)) (\tau)= (c_1\tau_1+d_1)^{-k_1+\ell/2}\cdots (c_d\tau_d+d_d)^{-k_d+\ell/2} \phi(\tau)^{-\ell} \rho_L(g,\phi)^{-1}f(g\tau),
\]
where $(g,\phi)\in \tilde G_\R$ and $g=\kabcd$.
The Petersson slash operator for the dual representation  $\bar\rho_L$ is defined analogously. We write $S_{k,\rho_L}$ for the space of vector valued Hilbert cusp forms of weight $k$ for $\tilde \Gamma$ with representation $\rho_L$.

We have the usual hyperbolic Laplace operators in weight $k$ acting on smooth functions on $\H^d$. They are given by
\begin{align}
\label{defdelta}
\Delta_k^{(j)} = -v_j^2\left( \frac{\partial^2}{\partial u_j^2}+ \frac{\partial^2}{\partial v_j^2}\right) + ik_j v_j\left( \frac{\partial}{\partial u_j}+i \frac{\partial}{\partial v_j}\right)
\end{align}
for $j=1,\dots,d$.
Moreover, we have the Maass lowering and raising operators
\begin{align*}
R_k^{(j)}  &=2i\frac{\partial}{\partial\tau_j} + k_j v_j^{-1},\\
L_k^{(j)}  &= -2i v_j^2 \frac{\partial}{\partial\bar{\tau_j}}.
\end{align*}
The raising operator $R_k^{(j)}$ raises the weight of an automorphic form in the $j$-th component by $2$ while $L_k^{(j)}$ lowers it by $2$.
The Laplacian $\Delta_k^{(j)}$ satisfies the identity
\begin{align*}
-\Delta_k^{(j)} = L^{(j)}_{k+2} R^{(j)}_k +k_j = R^{(j)}_{k-2} L^{(j)}_{k}.
\end{align*}

We consider functions which are invariant under the group of translations
$\tilde\Gamma_\infty$.
For $s\in \C$ we let $A_{k,\bar\rho_L}(s)$ be the space of smooth functions $f:\H^d\to  S_L$ satisfying:
\begin{enumerate}
\item
$f(T_b\tau) =  \bar \rho_L(T_b) f(\tau),\qquad b\in \calO_F$,
\item
$\Delta_k^{(1)} f =\frac{1}{4}(k_1-1+s)(k_1-1-s)f$,
\item
$f$ is antiholomorphic in $\tau_j$ for $j=2,\dots,d$.
\end{enumerate}

To describe the Fourier expansion of such a function we recall some properties of Whittaker functions, see  \cite{AS} Chap.~13 pp.~189 or \cite{B1} Vol.~I Chap.~6 p.~264.
%
Kummer's confluent hypergeometric function is defined by
\begin{align} \label{kummer1}
 M(a,b, z) = \sum_{n=0}^\infty \frac{(a)_n }{(b)_n} \frac{z^n}{n!},
\end{align}
where
$ (a)_n = \Gamma(a+n)/\Gamma(a)$ and $(a)_0=1$.
The Whittaker functions are defined by
\begin{align}
\label{eq:m1}
M_{\nu,\mu}(z)&= e^{-z/2} z^{1/2+\mu} M(1/2+\mu-\nu, 1+2\mu, z),\\
\label{eq:w1}
W_{\nu,\mu}(z)&= \frac{\Gamma(-2\mu)}{\Gamma(1/2-\mu-\nu)}M_{\nu,\mu}(z)
+\frac{\Gamma(2\mu)}{\Gamma(1/2+\mu-\nu)}M_{\nu,-\mu}(z).
\end{align}
They are linearly independent solutions of the Whittaker differential equation.
The $M$-Whittaker function has the asymptotic behavior
\begin{align}\label{Masy1}
M_{\nu,\,\mu}(z) &= z^{\mu+1/2}(1+O(z)), \qquad z\to 0,\\
\label{Masy2}
M_{\nu,\,\mu}(z) &= \frac{\Gamma(1+2\mu)}{\Gamma(\mu-\nu+1/2)}e^{z/2}z^{-\nu} ( 1+O(z^{-1})) ,\qquad z\to \infty,
\end{align}
while $W_{\nu,\,\mu}(z)$ is exponentially decreasing for real $z\to \infty$ and behaves like a constant times $z^{-\mu+1/2}$ as $z\to 0$.

For convenience we put for $s\in\C$ and $v_1\in\R$:
\begin{align}
\label{calM}
\calM_s(v_1)&=
|v_1|^{-k_1/2} M_{ \sgn(v_1)k_1/2,\,s/2}(|v_1|)
\cdot
e^{-v_1/2},\\
\label{calW}
\calW_s(v_1)&= |v_1|^{-k_1/2} W_{\sgn(v_1)k_1/2 ,\,s/2}(|v_1|)\cdot
e^{-v_1/2}.
\end{align}
%
The functions $\calM_s(v_1)$ and  $\calW_s(v_1)$ are holomorphic in $s$. Later we will be interested in their special value at
\begin{align}
s_0=1-k_1.
\end{align}
We have
\begin{align}
\label{Mspecial}
\calM_{s_0}(v_1)&= (-\sgn(v_1))^{k_1-1}\cdot e^{-v_1} \big(\Gamma(2-k_1)-(1-k_1)\Gamma(1-k_1,-v_1)\big),
\\
\label{Wspecial}
\calW_{s_0}(v_1)&= \begin{cases} e^{-v_1}
,&\text{$v_1>0$,}\\
e^{-v_1}\cdot  \Gamma(1-k_1,-v_1)
,&\text{$v_1<0$,}
\end{cases}
\end{align}
where $\Gamma(a,z)$ denotes the incomplete Gamma function.
Any $f\in A_{k,\bar\rho_L}(s)$ has a Fourier expansion of the form
\begin{align*}
f(\tau)&=\sum_{\substack{\mu\in L'/L\\ Q(\mu)\in \partial_F^{-1}}} \big(a(0,\mu,s)v_1^{(1-k_1-s)/2}+b(0,\mu,s)v_1^{(1-k_1+s)/2 }\big)\chi_\mu\\
&\phantom{=}{}+\sum_{\mu\in L'/L} \sum_{\substack{m\in \partial_F^{-1}-Q(\mu)\\m\neq 0}} \big(a(m,\mu,s)\calW_s(4\pi m_1 v_1)+b(m,\mu,s)\calM_s(4\pi m_1 v_1)\big)e(\tr(m\bar \tau))\chi_\mu.
\end{align*}
Special elements of $A_{k,\bar\rho_L}(s)$ are the functions
\begin{align}
\label{eq:deffms}
f_{m,\mu}(\tau,s):=
C(m,k,s)\calM_s(-4\pi m_1 v_1)e(-\tr (m\bar \tau) ) \chi_\mu 
\end{align}
for $\mu \in L'/L$, $m\in \partial^{-1}+Q(\mu)$, and $m\gg 0$.
Here $C(m,k,s)$ denotes the normalizing factor
\begin{align}
\label{eq:defcms}
C(m,k,s):=\frac{(4\pi m_2)^{k_2-1}\cdots (4\pi m_d)^{k_d-1}}{\Gamma(s+1)\Gamma(k_2-1)\cdots \Gamma(k_d-1)},
\end{align}
which turns out to be convenient later (for instance in Proposition \ref{proppair}).

\begin{definition}
\label{def:wm}
A {\em Whittaker form} of weight $k$ and parameter $s$ (for $\tilde \Gamma$ and $\bar \rho_L$) is a finite linear combination of the functions $f_{m,\mu}(\tau,s)$ for $\mu\in L'/L$, $m\in  \partial_F^{-1}+Q(\mu)$, and $m\gg 0$.
A {\em harmonic Whittaker form} is a Whittaker form with parameter $s_0$.
We denote the $\C$-vector space of such harmonic Whittaker forms by $H_{k,\bar\rho_L}$.
\end{definition}



So a harmonic Whittaker form is a finite linear combination of the functions
\[
f_{m,\mu}(\tau):=f_{m,\mu}(\tau,s_0)
\]
for $\mu\in L'/L$, $m\in  \partial_F^{-1}+Q(\mu)$, and $m\gg 0$.
Explicitly we have
\begin{align}
\label{eq:fs0}
f_{m,\mu}(\tau)
&= C(m,k,s_0)\Gamma(2-k_1)
\left(1-\frac{\Gamma(1-k_1,4\pi m_1 v_1)}{\Gamma(1-k_1)}\right)e^{4\pi m_1 v_1}e(\tr(-m\bar\tau))\chi_\mu .
%
\end{align}
We define the {\em dual weight} for $k$ by $\kappa= (2-k_1,k_2,\dots,k_d)$.
We consider the differential operator $\delta_k$ on functions $f:\H^d\to  S_L$ given by
\begin{align}
\label{def:delta}
\delta_k(f)=v_1^{k_1-2} \overline{L^{(1)}_k f(\tau)}.
\end{align}
If $f\in H_{k,\bar\rho_L}$, then $\delta_k(f)$ is a holomorphic function satisfying
$f(T_b\tau) =  \rho_L(T_b) f(\tau)$ for all $b\in \calO_F$.
In particular, we have
\begin{align}
\label{eq:delta}
\delta_k(f_{m,\mu})(\tau)= \frac{(4\pi m_1)^{\kappa_1-1}\cdots (4\pi m_d)^{\kappa_d-1}}{\Gamma(\kappa_1-1)\cdots \Gamma(\kappa_d-1)}
e(\tr(m\tau))\chi_\mu.
\end{align}

We have $\Delta_k^{(1)} f =0$ for a harmonic Whittaker form $f$.
The functions $f_{m,\mu}(\tau,-s_0)$ have eigenvalue zero for this Laplacian
as well, but they are {\em not\/} harmonic Whittaker forms in the sense of our definition.
The point of the definition is to prescribe a particular growth as $v_j\to \infty$ and $v_j\to 0$ such that there is a smooth relationship with Hilbert cusp forms (see below) and such that the theta lift of a harmonic Whittaker form has singularities along special divisors (see Section \ref{sect:sing}). More precisely, we have the following characterization:

\begin{remark}
\label{rem:charwhitt}
Assume that $k_1<1$ (which will be the case in all later applications).
Then $H_{k,\bar\rho_L}$ is the space of functions in $A_{k,\bar\rho_L}(s_0)$ satisfying:
\begin{enumerate}
\item[(4)]
only finitely many Fourier coefficients are non-zero,
\item[(5)]
there is a totally positive $\eps\in F$ such that $\delta_k(f)(\tau)=O(e^{-\tr (\eps v)})$ for $v_j\to \infty$,
\item[(6)]
we have $f(\tau)=O(v_1^{s_0})$, for $v_1\to 0$.
\end{enumerate}
\end{remark}

For the rest of this section we assume that $\kappa_j\geq 3/2 $ for $j=1,\dots,d$.
We define an operator $\xi_k:H_{k,\bar\rho_L}\to S_{\kappa,\rho_L}$ by
\begin{align}
\label{def:xi}
\xi_k(f)=\sum_{\gamma\in \tilde\Gamma_\infty\bs \tilde\Gamma} \delta_k(f)\mid_{\kappa,\rho_L}\gamma.
\end{align}
When $\kappa_j>2$ for $j=1,\dots,d$, the Poincar\'e series on the right hand side converges normally by a standard estimate (see e.g.~\cite{Ga}, Chapter 1.13) and defines a holomorphic cusp form. When $\kappa_j\geq 3/2$ we define the Poincar\'e series using ``Hecke summation'' as the value at $s'=0$ of the holomorphic continuation in $s'$ of
\begin{align}
\label{def:xi2}
\sum_{\gamma\in \tilde\Gamma_\infty\bs \tilde\Gamma} \delta_k(f)\norm(v)^{s'}\mid_{\kappa,\rho_L}\gamma.
\end{align}


\begin{proposition}
\label{prop:xisurj}
Assume that $\kappa_j\geq 3/2$ for $j=1,\dots,d$. The map $\xi_k:H_{k,\bar\rho_L}\to S_{\kappa,\rho_L}$ is surjective.
\end{proposition}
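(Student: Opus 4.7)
The plan is to prove surjectivity of $\xi_k$ by Petersson duality: I will show that for a generating set of $H_{k,\bar\rho_L}$ the Petersson pairings with $\xi_k(f_{m,\mu})$ recover all Fourier coefficients of an arbitrary cusp form $g\in S_{\kappa,\rho_L}$. Since $H_{k,\bar\rho_L}$ is by definition spanned by the $f_{m,\mu}(\tau):=f_{m,\mu}(\tau,s_0)$ with $\mu\in L'/L$ and $m\in \partial_F^{-1}-Q(\mu)$ totally negative, it suffices to compute $\langle \xi_k(f_{m,\mu}), g\rangle_{\mathrm{Pet}}$ for each such generator and show it is a nonzero explicit multiple of $\overline{c_g(-m,\mu)}$.

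The first step is the standard unfolding argument applied to the Poincar\'e series definition \eqref{def:xi}, which gives
\[
\langle \xi_k(f_{m,\mu}), g\rangle_{\mathrm{Pet}} = \int_{\tilde\Gamma_N\backslash \H^d} \bigl\langle \delta_k(f_{m,\mu})(\tau),\, \overline{g(\tau)}\bigr\rangle \norm(v)^{\kappa}\, d\mu(\tau).
\]
By \eqref{eq:delta}, $\delta_k(f_{m,\mu})(\tau)$ is an explicit nonzero constant times $e(\tr(-m\tau))\chi_\mu$. Since $-m\gg 0$, substituting the Fourier expansion $g(\tau)=\sum_{\nu}\sum_{n\gg 0} c_g(n,\nu)\, e(\tr(n\tau))\chi_\nu$ and integrating over $u\in \R^d/\calO_F$ selects precisely the coefficient with $(n,\nu)=(-m,\mu)$. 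The remaining $v$-integral factors as a product of one-dimensional Gamma integrals $\int_0^\infty v_j^{\kappa_j-2} e^{-4\pi |m_j| v_j}\, dv_j = \Gamma(\kappa_j-1)/(4\pi|m_j|)^{\kappa_j-1}$, which combine with the normalization $C(m,k,s_0)$ of \eqref{eq:defcms} to produce
\[
\langle \xi_k(f_{m,\mu}), g\rangle_{\mathrm{Pet}} = c\cdot \overline{c_g(-m,\mu)}
\]
for an explicit constant $c\neq 0$. Letting $m$ and $\mu$ vary over all admissible values shows that any $g$ orthogonal to the image of $\xi_k$ has all Fourier coefficients equal to zero, hence $g=0$, and $\xi_k$ is surjective.

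The main obstacle I expect is justifying the unfolding in the borderline case $\kappa_j\in [3/2, 2]$, where the Poincar\'e series in \eqref{def:xi} does not converge absolutely and is instead defined by the Hecke summation procedure \eqref{def:xi2}. To handle this, I would carry out the unfolding first for $\Re(s')$ large enough to have absolute convergence, producing
\[
\int_{\tilde\Gamma_N\backslash \H^d}\bigl\langle \delta_k(f_{m,\mu})(\tau),\, \overline{g(\tau)}\bigr\rangle \norm(v)^{\kappa+s'}\,d\mu(\tau),
\]
and then analytically continue to $s'=0$. After integrating over $u$ as before, the $v$-integrals become $\prod_j \Gamma(\kappa_j-1+s')(4\pi|m_j|)^{-(\kappa_j-1+s')}$, which are entire in $s'$ under the hypothesis $\kappa_j\geq 3/2$, so the continuation and evaluation at $s'=0$ are routine and reproduce the formula above. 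The well-definedness of $\xi_k(f_{m,\mu})$ at $s'=0$, which is required for \eqref{def:xi2} to make sense to begin with, is the standard Hecke summation result for vector-valued Hilbert modular Poincar\'e series of weight $\geq 3/2$, which applies here since the $\chi_\mu$-component of $\delta_k(f_{m,\mu})$ is the usual exponential Poincar\'e series seed $e(\tr(-m\tau))$.
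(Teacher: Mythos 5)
Your proposal is correct and is essentially the paper's argument: the paper notes that by \eqref{eq:delta} the forms $\xi_k(f_{m,\mu})$ are (nonzero multiples of) the Poincar\'e series of index $(-m,\mu)$ and then invokes the fact that such Poincar\'e series generate $S_{\kappa,\rho_L}$. Your unfolding computation of $(\xi_k(f_{m,\mu}),g)_{Pet}$, together with the Hecke-summation continuation in $s'$ for the range $3/2\leq\kappa_j\leq 2$, is precisely the standard proof of that spanning fact (compare the use of \cite{Luo} in Proposition \ref{proppair}), so you have simply made explicit what the paper cites.
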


\begin{proof}
The assertion follows from \eqref{eq:delta} and the fact that $S_{\kappa,\rho_L}$ is generated by Poincar\'e series.
\end{proof}

\begin{definition}
\label{def:wm2}
A Whittaker form $f$ is called {\em weakly holomorphic} if it is harmonic and satisfies $\xi_k(f)=0$.
We denote by $M^!_{k,\bar\rho_L}$ the subspace of weakly holomorphic Whittaker forms in $H_{k,\bar\rho_L}$.
\end{definition}

Note that a weakly holomorphic Whittaker form is {\em not\/} holomorphic as a function on $\H^d$.
It is rather holomorphic in a weak distribution sense.
In view of Proposition \ref{prop:xisurj}, we have the exact sequence
\begin{align}
\label{ex-sequ}
\xymatrix{ 0\ar[r]& M^!_{k,\bar\rho_L} \ar[r]& H_{k,\bar\rho_L}
\ar[r]^{\xi_k}& S_{\kappa,\rho_L} \ar[r] & 0 }.
\end{align}

Recall that the Petersson scalar product of $f,g\in S_{\kappa,\rho_L}$ is given by
\begin{align}
(f, g)_{Pet}= \frac{1}{\sqrt{D}}\int_{\tilde \Gamma\bs \H^d} \langle f,\bar g \rangle v^\kappa \,
d\mu(\tau),
\end{align}
where $d\mu (\tau)=\frac{du_1\,dv_1}{v_1^2}\dots \frac{du_d\,dv_d}{v_d^2}$ is the invariant measure on $\H^d$, and $v^\kappa$ is understood in multi-index notation.
We define a bilinear pairing between the spaces $S_{\kappa,\rho_L}$  and $H_{k,\bar\rho_L}$ by putting
\begin{equation}\label{defpair}
\{g,f\}=\big( g,\, \xi_k(f)\big)_{Pet}
\end{equation}
for $g\in S_{\kappa,\rho_L}$ and $f\in H_{k,\bar\rho_L}$. The pairing vanishes when $f$ is weakly holomorphic. Because of Proposition~\ref{prop:xisurj} the induced pairing between $S_{\kappa,\rho_L}$ and $ H_{k,\bar\rho_L}/M^!_{k,\bar\rho_L}$
is non-degenerate. So an $f\in H_{k,\bar\rho_L}$ is weakly holomorphic, if and only if $\{g,f\}=0$ for all $g\in S_{\kappa,\rho_L}$.

\begin{proposition}\label{proppair}
Let $g\in S_{\kappa,\rho_L}$ with Fourier expansion $g=\sum_{n,\nu}b(n,\nu) e(\tr(n\tau))\chi_\nu$, and let
\[
f=\sum_{\mu\in L'/L} \sum_{m\gg 0} c(m,\mu)f_{m,\mu}(\tau)\in H_{k,\bar\rho_L}.
\]
Then the pairing
of $g$ and $f$ is equal to
\begin{equation}
\label{pairalt}
\{g,f\}= \sum_{\mu\in L'/L} \sum_{m\gg 0}  c(m,\mu) b(m,\mu).
\end{equation}
\end{proposition}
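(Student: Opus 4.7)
The plan is to combine the standard Poincar\'e-series unfolding of the Petersson scalar product with a direct Fourier computation made possible by the explicit formula \eqref{eq:delta} for $\delta_k(f_{m,\mu})$. By the definition of $\xi_k(f)$ in \eqref{def:xi} and the $\tilde\Gamma$-invariance of the pairing, one has
\[
\{g,f\}=(g,\xi_k(f))_{Pet}=\frac{1}{\sqrt{D}}\int_{\tilde\Gamma_N\bs\H^d}\langle g(\tau),\overline{\delta_k(f)(\tau)}\rangle\,v^\kappa\,d\mu(\tau).
\]
When $\kappa_j>2$ for all $j$ the Poincar\'e series converges absolutely and the unfolding is immediate; in the remaining range $3/2\le\kappa_j\le 2$ one has to work with the regularized sum \eqref{def:xi2}, unfold for $\Re(s')$ large, and then pass to the limit $s'\to 0$. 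Justifying this analytic continuation is the only subtle point in the argument.

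Next I rewrite the integrand. Since $\delta_k(f)=v_1^{k_1-2}\overline{L_k^{(1)} f}$ and $\kappa_1=2-k_1$, the factor $v^\kappa\cdot v_1^{k_1-2}$ collapses to $v_2^{k_2}\cdots v_d^{k_d}$, while \eqref{eq:delta} gives
\[
L_k^{(1)}f_{m,\mu}(\tau)=v_1^{2-k_1}\cdot\frac{\prod_{j=1}^d|4\pi m_j|^{\kappa_j-1}}{\prod_{j=1}^d\Gamma(\kappa_j-1)}\,e^{2\pi i\tr(mu)}\,e^{2\pi\tr(mv)}\,\chi_\mu.
\]
Taking $(\calO_F\bs\R^d)\times(\R_{>0})^d$ as fundamental domain for $\tilde\Gamma_N$ on $\H^d$, the $u$-integral
\[
\int_{\calO_F\bs\R^d}e^{2\pi i\tr((n+m)u)}\,du=\sqrt{D}\cdot\delta_{n,-m}
\]
forces $n=-m$ and selects the Fourier coefficient $b(-m,\mu)$ of $g_\mu$. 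The index matching is automatic: since $m\in\partial_F^{-1}-Q(\mu)$ and $\partial_F^{-1}$ is an additive group, $-m$ lies in the Fourier index set $Q(\mu)+\partial_F^{-1}$ for the $\chi_\mu$-component of $g$.

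After the $u$-integration the remaining $v$-integral factors as a product of elementary Gamma integrals,
\[
\int_0^\infty v_1^{-k_1}e^{-4\pi|m_1|v_1}\,dv_1=\frac{\Gamma(\kappa_1-1)}{(4\pi|m_1|)^{\kappa_1-1}},\qquad\int_0^\infty v_j^{k_j-2}e^{-4\pi|m_j|v_j}\,dv_j=\frac{\Gamma(\kappa_j-1)}{(4\pi|m_j|)^{\kappa_j-1}}\quad(j\ge 2),
\]
all convergent thanks to $\kappa_j\ge 3/2$. Their product exactly cancels the normalization factor $\prod_j|4\pi m_j|^{\kappa_j-1}/\prod_j\Gamma(\kappa_j-1)$ appearing in $L_k^{(1)} f_{m,\mu}$, contributing $1$ per summand. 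Combined with the $\sqrt{D}$ from the $u$-integral and the prefactor $1/\sqrt{D}$ in the Petersson pairing, this yields the claimed identity $\{g,f\}=\sum_{\mu,\,m\ll 0}c(m,\mu)\,b(-m,\mu)$, i.e.\ formula \eqref{pairalt}.
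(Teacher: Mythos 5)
Your argument is correct and is essentially the paper's proof: the paper simply cites the classical formula for the Petersson product of a cusp form with a Poincar\'e series (Luo, Section 2) together with \eqref{eq:delta} and \eqref{def:xi}, and your unfolding over $\tilde\Gamma_N\bs\H^d$, the $u$-integral picking out $b(-m,\mu)$, and the Gamma integrals cancelling the normalizing constant are exactly what that cited formula encapsulates. Your remark that for $3/2\le\kappa_j\le 2$ one must unfold the Hecke-regularized series \eqref{def:xi2} for $\Re(s')$ large and let $s'\to 0$ is the same caveat implicitly handled by the reference, so nothing essential is missing.
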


\begin{proof}
This follows from \eqref{eq:delta} and \eqref{def:xi} using the formula for the Petersson scalar product of Poincar\'e series with cusp forms, see e.g.~
\cite[Section 2]{Luo}.
\end{proof}


\subsection{Whittaker forms and weak Maass forms}

We end this section by explaining the relationship between Whittaker forms and weak Maass forms as defined in \cite{BF}.
Assume that $d=1$ so that $F=\Q$. Then there is no Koecher principle and there are nontrivial weak Maass forms.
For simplicity we also assume that $k=k_1<0$.

A smooth function $f:\H\to
 S_L$ is called a {\em harmonic weak Maass form} (of weight $k$ with
respect to $\tilde \Gamma$ and $\bar \rho_L$) if it satisfies:
\begin{enumerate}
\item[(i)]
$f \mid_{k,\bar\rho_L} \gamma= f$
for all $\gamma\in \tilde \Gamma$,
\item[(ii)]
$\Delta_k f=0$, where
$\Delta_k$ is the weight $k$ Laplacian,
\item[(iii)]
there is a $ S_L$-valued Fourier polynomial
\[
P_f(\tau)=\sum_{\mu\in L'/L}\sum_{m\geq 0} c^+(m,\mu) q^{-m}\chi_\mu
\]
such that $f(\tau)-P_f(\tau)=O(e^{-\eps v})$ as $v\to \infty$ for
some $\eps>0$.
%
\end{enumerate}
The Fourier polynomial $P_f$  is called the {\em principal part} of
$f$. Here we denote the vector space of these harmonic weak Maass forms
by  $\calH_{k,\bar\rho_L}$.
Any weakly holomorphic modular form is a harmonic weak Maass form.
We denote the subspace of weakly holomorphic modular forms
by
$\calM^!_{k,\bar\rho_L}$.

\begin{proposition}
\label{prop:wm}
If $f\in H_{k,\bar\rho_L}$ is a harmonic Whittaker form, then
\[
\eta(f)=\sum_{\gamma\in \tilde\Gamma_\infty\bs \tilde\Gamma} f\mid_{k,\bar\rho_L}\gamma
\]
converges and defines an element of  $\calH_{k,\bar\rho_L}$.
The map $\eta:H_{k,\bar\rho_L}\to \calH_{k,\bar\rho_L}$ defined by  $f\mapsto \eta(f)$ is an isomorphism. Its inverse is given by mapping a harmonic weak Maass form $g$ with principal part  $P_g=\sum_{\mu}\sum_{m\geq 0} c^+(m,\mu) q^{-m}\chi_\mu$ to the harmonic Whittaker form
\[
f=\sum_{\mu}\sum_{m>0} c^+(m,\mu) f_{m,\mu}(\tau).
\]
The restriction of $\eta$ induces an isomorphism $M^!_{k,\bar\rho_L}\to \calM^!_{k,\bar\rho_L}$.
\end{proposition}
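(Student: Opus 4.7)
My approach proceeds in four stages: convergence, identification of the principal part, injectivity, and surjectivity, with the final $M^!$-statement following from compatibility of $\eta$ with $\xi_k$.

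First I would establish convergence of $\eta(f_{m,\mu})$ for $k<0$. By condition (6) of Remark \ref{rem:charwhitt} we have $f_{m,\mu}(\tau)=O(v^{1-k})$ as $v\to 0$, so for each non-identity coset representative $\gamma=\kabcd$ of $\tilde\Gamma_N\bs\tilde\Gamma$ the slashed term satisfies $|(c\tau+d)^{-k} f_{m,\mu}(\gamma\tau)|\ll v^{1-k}|c\tau+d|^{k-2}$ using $\Im(\gamma\tau)=v/|c\tau+d|^2$. The majorant $\sum v^{1-k}|c\tau+d|^{k-2}$ converges locally uniformly on $\H$ exactly when $k<0$, yielding absolute convergence. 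Modularity under $\tilde\Gamma$ is then automatic, and harmonicity follows since $\Delta_k$ commutes with the slash operator and annihilates each translate of $f_{m,\mu}$.

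Next I would identify the principal part. The identity coset contributes $f_{m,\mu}$ itself, which by \eqref{eq:fs0} differs from $q^m\chi_\mu$ only by an exponentially decaying incomplete-gamma tail. The remaining cosets contribute a constant term in $S_L$ together with exponentially decaying higher Fourier modes via the standard Kloosterman-sum analysis of Maass--Poincar\'e series. Thus $\eta(f_{m,\mu})\in \calH_{k,\bar\rho_L}$ with principal part $q^m\chi_\mu+C_{m,\mu}$ for an explicit $C_{m,\mu}\in S_L$. Injectivity is then immediate: if $\eta(\sum c(m,\mu)f_{m,\mu})=0$, reading off the $q^m\chi_\mu$-coefficient in the principal part gives $c(m,\mu)=0$ for every $m<0$.

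The main obstacle is surjectivity, namely matching the constant term produced by $\eta$ with that of an arbitrary target $g\in \calH_{k,\bar\rho_L}$. Given $g$ with principal part $\sum_{m\leq 0}c^+(m,\mu)q^m\chi_\mu$, set $f:=\sum_{m<0}c^+(m,\mu) f_{m,\mu}$ and form $h:=g-\eta(f)\in \calH_{k,\bar\rho_L}$. By the second step, all negative-index principal part coefficients of $h$ vanish, leaving only a constant. A direct computation shows $\delta_k(f)=2iv^k\overline{\partial_{\bar\tau} f}$, so $\delta_k$ agrees pointwise with the standard $\xi_k$-operator on weak Maass forms from \cite{BF}. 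The Bruinier--Funke pairing identity (the weak Maass form analogue of Proposition \ref{proppair}) expresses $(g',\xi_k(h))_{Pet}$ as a pairing of the principal part of $h$ against Fourier coefficients of $g'\in S_{\kappa,\rho_L}$; since only a constant term survives in the principal part of $h$ and any cusp form $g'$ has vanishing constant Fourier coefficient, this pairing vanishes for every $g'$, forcing $\xi_k(h)=0$. Hence $h$ is weakly holomorphic with only a constant principal part, so bounded and holomorphic on $\H$, thus an element of $M_{k,\bar\rho_L}$, which is zero for $k<0$. Therefore $h=0$ and $\eta(f)=g$.

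For the final statement, the identity $\delta_k(f)=2iv^k\overline{\partial_{\bar\tau} f}$ together with the commutativity of the standard $\xi_k$ with the slash operator yields $\xi_k(\eta(f))=\sum_\gamma \delta_k(f)|_{\kappa,\rho_L}\gamma$, which by \eqref{def:xi} equals the Whittaker-form $\xi_k(f)$. Hence $f\in M^!_{k,\bar\rho_L}$ iff $\eta(f)\in \calM^!_{k,\bar\rho_L}$, and the restricted bijection follows from the isomorphism on all of $H_{k,\bar\rho_L}$.
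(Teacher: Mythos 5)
Your proposal is correct and follows essentially the route the paper intends: the paper disposes of this proposition by citing the classical theory of non-holomorphic (Maass--)Poincar\'e series in \cite{He} and \cite{Br2}, Chapter 1, together with the compatibility of $\xi_k$ with the operator of \cite{BF}, and your argument simply supplies those standard details (convergence of the weight-$k<0$ Poincar\'e series, identification of the principal part via the Kloosterman-sum expansion, and the subtract-and-kill surjectivity step using the Bruinier--Funke pairing). The minor points you gloss over (termwise differentiation under the sum, uniformity in $u$ of the $O(v^{1-k})$ bound) are exactly the standard estimates covered by the cited references, so there is no gap.
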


The proof of the proposition follows from well known properties of non-holomorphic Poincar\'e series, see e.g.~\cite{He} or \cite{Br2}, Chapter 1.
Moreover, the operator $\xi_k$ on harmonic Whittaker forms is compatible with the corresponding operator on harmonic weak Maass forms of \cite{BF}.

\section{The theta lift}
\label{sect:sing}

Here we define the regularized theta lift of Whittaker forms with parameter $s$. The images of the lift are automorphic Green functions for special divisors.
We first do this when $\Re(s)$ is sufficiently large.
For the general case we use meromorphic continuation in $s$, which can be obtained by means of spectral theory for $X_K$.

We use the setup of the previous sections. Throughout we assume that the compact open subgroup $K\subset H(\hat \Q)$ fixes the lattice $L\subset V$ and acts trivially on $L'/L$.
Let
$\Delta_\D$ be the Laplace operator
induced by the the Casimir element of
the Lie algebra of $\SO(V_{\sigma_1})$ (or by the invariant metric on $\D$).
We normalize it as in
\cite{Br2} (4.1).

Let $f$ be a Whittaker form of weight
\begin{align}
\label{eq:k}
k=\left(\frac{2-n}{2}, \frac{2+n}{2},\dots,\frac{2+n}{2}\right)
\end{align}
with parameter $s$ for $\tilde\Gamma$ and $\bar\rho_L$ (see Definition \ref{def:wm}).
We assume that $s>s_0:=1-k_1=n/2$.
Recall from Theorem \ref{thm:theta} that the Siegel theta function $\Theta_S(\tau, z,h)$ associated to the lattice $L$ has weight $\left(\frac{n-2}{2}, \frac{2+n}{2},\dots,\frac{2+n}{2}\right)$ and representation $\rho_L$. Consequently, the pairing
\[
\langle f(\tau), \Theta_S(\tau, z,h)\rangle(v_2\cdots v_d)^{\ell/2}
\]
is a scalar valued $\tilde\Gamma_\infty$-invariant function in $\tau$. We want to consider the theta integral
\[
\Phi(z,h,f)= \frac{1}{\sqrt{D}}\int_{\tilde \Gamma_\infty\bs \H^d}\langle f(\tau), \Theta_S(\tau, z,h)\rangle (v_2\cdots v_d)^{\ell/2}\,d\mu(\tau).
\]
Because of the exponential growth of $f$, the integral does not converge. Similarly as in \cite{Bo2} and \cite{Br2}, we regularize it by prescribing the order of integration, namely we first integrate over $u$ and then over $v$.
In the notation the regularization is indicated by the superscript ``reg'' at the integral.

\begin{definition}
\label{def:regint}
The regularized theta lift of $f$ is defined as
\begin{align*}
\Phi(z,h,f)&=
\frac{1}{\sqrt{D}}\int_{\tilde \Gamma_\infty\bs \H^d}^{reg}\langle f(\tau), \Theta_S(\tau, z,h)\rangle (v_2\cdots v_d)^{\ell/2}\,d\mu(\tau)\\
&=\frac{1}{\sqrt{D}}\int_{v\in (\R_{>0})^d}\left(\int_{u\in \calO_F\bs \R^d}
\langle f(\tau), \Theta_S(\tau, z,h)\rangle \,du\right) (v_2\cdots v_d)^{\ell/2}
\,\frac{dv}{\norm(v)^{2}}.
\end{align*}
\end{definition}

\begin{theorem}
\label{thm:lift1}
Let $f$ be a Whittaker form of parameter $s$ as above. If
$\Re(s)>s_0+2$,
then the regularized theta integral
converges for $(z,h)$ outside a subset of $X_K$ of measure zero. It defines an integrable function on $X_K$.
\end{theorem}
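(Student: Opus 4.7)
My plan is to carry out the inner $u$-integration explicitly and then analyze the resulting $v$-integral term by term. By linearity it suffices to take $f=f_{m,\mu}(\tau,s)$ for fixed $\mu\in L'/L$ and totally negative $m\in \partial_F^{-1}-Q(\mu)$. Expanding $\Theta_S$ in its Fourier series and integrating over $u\in\calO_F\bs\R^d$, orthogonality of additive characters selects those $\lambda$ with $Q(\lambda)=-m$, producing
\[
v_1\,C(m,k,s)\,\calM_s(4\pi m v)\sum_{\substack{\lambda\in h(\mu+L)\\ Q(\lambda)=-m}} e^{-2\pi\tr\bigl(Q(\lambda_{z^\perp})-Q(\lambda_z)\bigr)v}.
\]
Since $V$ is anisotropic for $d>1$ by our signature assumption, this lattice sum is finite; in any case the positive-definite majorant ensures absolute convergence pointwise in $v\in(\R_{>0})^d$.

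Next, for each $j\geq 2$ the $v_j$-exponential in $\calM_s(4\pi mv)$ combines with the theta-Gaussian to give $e^{-4\pi|m_j|v_j}$, and integration against $v_j^{n/2-1}\,dv_j$ yields the absolutely convergent $\Gamma(n/2)(4\pi|m_j|)^{-n/2}$. In the $v_1$ variable, the substitution $t=4\pi|m_1|v_1$ turns the integrand into a classical Laplace transform of $M_{-k_1/2,s/2}(t)$. By the asymptotics \eqref{Masy1} and \eqref{Masy2}, the integrand behaves like $t^{(s-k_1-1)/2}$ near $t=0$ (integrable from $\Re(s)>s_0$) and like $t^{-1}e^{-(\alpha-1/2)t}$ as $t\to\infty$, where
\[
\alpha(z,\lambda)=\frac{R_1(\lambda,z)}{2|m_1|},\qquad R_1(\lambda,z):=Q(\lambda_{1,z^\perp})-Q(\lambda_{1,z}).
\]
The identity $R_1(\lambda,z)+m_1=-2Q(\lambda_{1,z})\geq 0$ gives $\alpha\geq 1/2$, with equality exactly when $\lambda_{1,z}=0$, i.e.\ when $z\in\D_{\lambda_1}$. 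The hypothesis $\Re(s)>s_0+2$ provides uniform control at both ends of the $t$-integral and legitimizes termwise integration, so each Laplace integral converges absolutely off $\D_{\lambda_1}$.

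The exceptional locus $\bigcup_\lambda \D_{\lambda_1}$ projects to the finite union of special divisors $Z(m,\chi_\mu)$ occurring in the support of $f$, a closed analytic subset of complex codimension $1$ in $X_K$, hence of measure zero; this establishes the first assertion. For integrability on $X_K$ one applies Fubini (legitimate by the absolute convergence just established and positivity of the Gaussian factors) to swap the $X_K$- and $v$-integrations. The explicit Laplace evaluation identifies the singularity of $\Phi(z,h,f)$ along each $\D_{\lambda_1}$ as at most logarithmic in $Q(\lambda_{1,z})$, coming from $\int^\infty t^{-1}e^{-(\alpha-1/2)t}\,dt\sim-\log(\alpha-1/2)$. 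Such logarithmic singularities along complex codimension-$1$ analytic subsets are locally $L^1$; since $X_K$ is compact for $d>1$ (and standard cusp growth estimates handle $d=1$), global integrability follows. The step I expect to be the main obstacle is pinning down that the singularity is genuinely only logarithmic and not of power-type: this is exactly what the range $\Re(s)>s_0+2$ secures, by giving enough room in the asymptotic estimates of $M_{-k_1/2,s/2}$ so that the Laplace-transform singularity in $\alpha-1/2$ is logarithmic, which is the essential input for the local $L^1$-bound.
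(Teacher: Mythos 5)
There is a genuine gap, and it sits exactly at the heart of the theorem. Your claim that ``since $V$ is anisotropic for $d>1$ \ldots this lattice sum is finite'' is false: anisotropy over $F$ makes $X_K$ compact, but it does not make the representation set $\{\lambda\in h(\mu+L):\,Q(\lambda)=m\}$ finite. At $\sigma_1$ the form is indefinite of signature $(n,2)$, so this set is a finite union of orbits under the infinite arithmetic group $\Gamma_K=H(\Q)\cap K$, each orbit infinite (already $x^2-2y^2=1$ over $\Z$ shows anisotropy does not bound representation numbers). Consequently your analysis is only per term: after the $u$-integration and the Laplace transform you obtain, as in the paper, one hypergeometric term $\phi(\lambda,z,s)$ for each $\lambda$, with a logarithmic singularity along $\D_{\lambda_1}$ --- but the statements to be proved (convergence off a measure-zero set, and integrability on $X_K$) concern the \emph{infinite sum} $\sum_{\lambda}\phi(\lambda,z,s)$, and nothing in your argument controls it. ``Each term is locally $L^1$ with a log singularity'' does not imply the sum converges a.e.\ or is in $L^1(X_K)$; likewise, pointwise absolute convergence of the theta series for fixed $v$ does not by itself legitimize termwise $v$-integration of an infinite series (the paper invokes positivity and monotone convergence a posteriori for this step).

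The missing ingredient, which is the substance of the paper's proof of Theorem \ref{thm:lift2}, is the following: by reduction theory there are finitely many $\Gamma_K$-orbits of such $\lambda$; for a fixed $\lambda$ one applies Fubini to unfold $\int_{\Gamma_K\backslash\D}\sum_{\gamma\in\Gamma_{K,\lambda}\backslash\Gamma_K}\phi(\gamma\lambda,z,s)\,d\mu(z)$ to $\int_{\Gamma_{K,\lambda}\backslash\D}\phi(\lambda,z,s)\,d\mu(z)$, and the finiteness of the resulting orbital integral $\int_{H_\lambda(\R)\backslash H(\R)}\phi(h^{-1}\lambda,z_0,s)\,dh$ for $\Re(s)>s_0$ is supplied by Oda--Tsuzuki (Proposition 3.1.1 of \cite{OT}). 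That estimate is what yields both the a.e.\ convergence of the series and its integrability on $X_K$; you need it (or an equivalent bound on the sum over the orbit) and currently have no substitute. A secondary point: the hypothesis $\Re(s)>s_0+2$ is not what makes the singularity logarithmic (that happens for every $s$, term by term); in the paper it is used, via Proposition \ref{prop:thetagrowth} and \eqref{Masy1}, to control the $v\to 0$ end of the regularized integral with the full theta function still inside, i.e.\ to ensure the prescribed order of integration converges before any interchange of sum and integral.
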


To prove this Theorem, it suffices to consider for any $\mu\in L'/L$ and any totally positive $m\in  \partial_F^{-1}+Q(\mu)$ the theta integral
\begin{align}
\Phi_{m,\mu}(z,h,s)&:=\Phi(z,h,f_{m,\mu}(\cdot,s))
\end{align}
of the special Whittaker forms $f_{m,\mu}(\tau,s)$, see \eqref{eq:deffms}. We call $\Phi_{m,\mu}(z,h,s)$ the {\em automorphic Green function} of the divisor $Z(m,\mu)$.
Later we will show that its regularized value at $s_0$ gives rise to a subharmonic Arakelov Green function for $Z(m,\mu)$ in the sense of \cite{SABK}.
Theorem \ref{thm:lift1} will follow from Theorem \ref{thm:lift2} below.

Let $F(a,b,c; z)$ denote the Gauss hypergeometric function
\begin{align}\label{gauss1}
 F(a,b,c; z) = \sum_{n=0}^\infty \frac{(a)_n (b)_n}{(c)_n} \frac{z^n}{n!},
\end{align}
see e.g.~\cite{AS} Chap.~15 or \cite{B1} Vol.~I Chap.~2.
The circle of convergence of the series (\ref{gauss1}) is the unit circle $|z|=1$.
For $\lambda\in V(\R)$, $z\in \D$, and $s\in \C$ we put
\begin{align}
\label{eq:phi}
\phi(\lambda,z,s):=\frac{\Gamma(\frac{s}{2}+\frac{n}{4})}{\Gamma(s+1)}\left(\frac{Q(\lambda_1)}{Q(\lambda_{1z^\perp})}\right)^{\frac{s}{2}+\frac{n}{4}}
 F\left(\frac{s}{2}+\frac{n}{4},\, \frac{s}{2}-\frac{n}{4}+1, \, s+1; \, \frac{Q(\lambda_1)}{Q(\lambda_{1z^\perp})} \right).
\end{align}
This function has the invariance property $\phi(\lambda,z,s)=\phi(h\lambda,hz,s)$ for $h\in H(\R)$. Notice that it is closely related to the secondary spherical function $\phi_{s}^{(2)}(h)$ on $\SO(n,2)$ considered in \cite{OT}.
More precisely, for a totally positive $\lambda\in V$,  we fix a base point $z_0\in \D$ such that $z_0\perp\lambda$.
As in the proof of Theorem 4.7 of \cite{BK} it is easily verified that
\begin{align}
\label{eq:compot}
\phi(\lambda,hz_0,s)=\frac{-2}{\Gamma(\frac{s}{2}-\frac{s_0}{2}+1)} \phi_{s}^{(2)}(h)
\end{align}
for $h\in \SO(V_{\sigma_1})\cong \SO(n,2)$.
It follows from \cite{OT}, Proposition 2.4.2, that $\phi(\lambda,z,s)$ satisfies the differential equation
\begin{align}
\label{eq:diffphi}
\Delta_\D\phi(\lambda,z,s)=\frac{1}{8}(s^2-s_0^2)\phi(\lambda,z,s).
\end{align}

\begin{theorem}
\label{thm:lift2}
Let $\mu\in L'/L$, $m\in  \partial_F^{-1}+Q(\mu)$, and $m\gg 0$. If $\Re(s)>s_0+2$, the regularized theta integral of $f_{m,\mu}(\tau,s)$ converges and is equal to
\begin{align}
\label{eq:ser}
\Phi_{m,\mu}(z,h,s)
&=
\sum_{\substack{\lambda\in h(\mu+L)\\Q(\lambda)=m}}\phi(\lambda,z,s).
%
\end{align}
The sum converges for $\Re(s)>s_0$ and $(z,h)$ outside a subset of $X_K$ of measure $0$. It defines an integrable function on $X_K$.
\end{theorem}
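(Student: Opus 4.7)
The plan is to carry out an unfolding computation that picks out individual lattice vectors, followed by an explicit evaluation of the resulting elementary integrals, and then to translate the result into the hypergeometric expression for $\phi(\lambda,z,s)$.

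First I would perform the $u$-integration. By Definition~\ref{def:regint} we integrate first over $u\in\calO_F\backslash\R^d$. The Whittaker form $f_{-m,\mu}(\tau,s)$ carries the single Fourier character $e(\tr(-mu))$ and the basis vector $\chi_\mu$, while the Siegel theta function \eqref{theta2} has the Fourier expansion, with respect to the translation action, indexed by $\lambda\in V(F)$ with Fourier character $e(\tr(Q(\lambda_{z^\perp})+Q(\lambda_z))u)=e(\tr(Q(\lambda)u))$. Moreover $\langle \chi_\mu,\chi_\nu\rangle$ picks out the coset $\nu=\mu$, i.e.\ $\lambda\in h(\mu+L)$ (so that $Q(\lambda)\in\partial^{-1}-Q(\mu)+\calO_F$ makes sense). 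Hence the $u$-integral isolates exactly the lattice vectors $\lambda\in h(\mu+L)$ with $Q(\lambda)=m$, leaving the expression
\begin{equation*}
\Phi_{m,\mu}(z,h,s)=\frac{C(-m,k,s)}{\sqrt{D}}\sum_{\substack{\lambda\in h(\mu+L)\\Q(\lambda)=m}}\int_{(\R_{>0})^d}\calM_s(-4\pi m v)\,v_1\, e^{-2\pi\tr(Q(\lambda_{z^\perp})-Q(\lambda_{z}))v}(v_2\cdots v_d)^{\ell/2}\,\frac{dv}{\norm(v)^2}.
\end{equation*}

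Next I would evaluate the resulting $v$-integrals. They factor as a product of integrals over $v_1,\dots,v_d$. For $j\geq 2$, $m$ is totally positive and $\lambda_j$ appears only via $Q(\lambda_j)$ (there is no negative-definite piece at $\sigma_j$), so using \eqref{calM} the $v_j$-integral reduces to a standard Gamma integral $\int_0^\infty e^{-4\pi Q(\lambda_j)v_j}v_j^{k_j-2}(v_j)^{\ell/2}\,dv_j$, up to the factor $e^{-2\pi m_j v_j}$; collecting powers one checks that these Gamma functions precisely cancel the archimedean factors built into $C(-m,k,s)$ in \eqref{eq:defcms}, leaving $Q(\lambda_j)^{-(k_j-1)}=Q(\lambda_j)^{-n/2}$. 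For $j=1$, one is left with an integral of the form $\int_0^\infty M_{-k_1/2,s/2}(4\pi m v_1)\,e^{-2\pi(Q(\lambda_{1z^\perp})-Q(\lambda_{1z}))v_1}v_1^{-k_1/2}\,dv_1$ (after using that $m$ is totally positive and $Q(\lambda_1)=m-Q(\lambda_{1z^\perp})+Q(\lambda_z)$ in the negative-signature component). By the classical Mellin--Whittaker integral (see, e.g., Gradshteyn--Ryzhik 7.621.3), this evaluates, after normalization by $\Gamma(s+1)$, precisely to the hypergeometric expression in \eqref{eq:phi}. Matching the arguments using $Q(\lambda_1)/Q(\lambda_{1z^\perp})\in(0,1)$ (which holds because $Q(\lambda_{1z})\leq 0$ and $Q(\lambda_1)=Q(\lambda_{1z^\perp})+Q(\lambda_{1z})\leq Q(\lambda_{1z^\perp})$) produces $\phi(\lambda,z,s)$.

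The convergence issues form the main obstacle. One must justify, for $\Re(s)>s_0+2$, the exchange of summation and integration, and that the individual $v$-integrals converge absolutely. As $v_1\to\infty$, $\calM_s(-4\pi m v_1)$ grows like $e^{2\pi m_1 v_1}v_1^{-\nu}$ by \eqref{Masy2}, and the Gaussian factor decays like $e^{-2\pi(Q(\lambda_{1z^\perp})-Q(\lambda_{1z}))v_1}$; since $Q(\lambda_{1z^\perp})-Q(\lambda_{1z})\geq m_1$ with equality only when $\lambda_{1z}=0$, the $v_1\to\infty$ behavior is integrable for generic $(z,h)$. As $v_1\to 0$, \eqref{Masy1} gives $\calM_s\sim v_1^{(1+s-k_1)/2}$, which combined with the factor $v_1^{-2+1}$ is integrable for $\Re(s)>s_0$. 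The assumption $\Re(s)>s_0+2$ is used to ensure absolute convergence of the sum over $\lambda$ before taking limits: one estimates $|\phi(\lambda,z,s)|$ using the hypergeometric series and relates the growth to the Siegel theta majorant, for which convergence of $\sum_\lambda$ is classical. For the final statement, one shows by comparison with the same majorant (or via the spectral-theoretic identification with $\phi_s^{(2)}$ in \eqref{eq:compot}) that the series $\sum_\lambda\phi(\lambda,z,s)$ in \eqref{eq:ser} already converges for $\Re(s)>s_0$ off a measure-zero set, and that the sum defines an integrable function on $X_K$; the latter follows because the pointwise bound is locally $L^1$ near the exceptional set $\bigcup_{\lambda}\D_\lambda$, as $\phi(\lambda,z,s)$ has at worst a logarithmic singularity there in the relevant range of $s$ (the codimension of $\D_\lambda$ being two over $\R$).
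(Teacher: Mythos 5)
Your unfolding computation coincides with the paper's: integrate over $u$ to isolate the $\lambda\in h(\mu+L)$ with $Q(\lambda)=m$, factor the $v$-integral, evaluate Gamma integrals at the places $\sigma_2,\dots,\sigma_d$, and recognize the $v_1$-integral as a Laplace transform of the $M$-Whittaker function yielding the hypergeometric expression \eqref{eq:phi}. Two small corrections there: since $Q(\lambda)=m$ forces $Q(\lambda_j)=m_j$ for $j\geq 2$, the Gamma integrals cancel the factors of $C(-m,k,s)$ in \eqref{eq:defcms} completely, leaving only $\Gamma(s+1)^{-1}$ — no residual factor $Q(\lambda_j)^{-n/2}$ survives; and your single-term estimate near $v_1\to 0$ is mislocated: one term is integrable already for $\Re(s)>-s_0$, and the actual source of the hypothesis $\Re(s)>s_0+2$ is the growth of the lattice sum as $v_1\to 0$ (the partial theta series is $O(v_1^{-(n+2)/2})$, cf.\ Proposition \ref{prop:thetagrowth}), played against $\calM_s\sim v_1^{(\Re(s)+1-k_1)/2}$.

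The genuine gap is in the last claim, convergence of $\sum_\lambda\phi(\lambda,z,s)$ for $\Re(s)>s_0$ off a null set together with integrability on $X_K$. Comparison with the Siegel theta majorant, which is your main proposed route, cannot reach this range: for large $Q(\lambda_{1z^\perp})$ one has $\phi(\lambda,z,s)\asymp Q(\lambda_{1z^\perp})^{-s/2-n/4}$, and a lattice-point count shows $\sum_\lambda Q(\lambda_{1z^\perp})^{-\sigma}$ converges pointwise only for $\sigma>(n+2)/2$, i.e.\ again $\Re(s)>s_0+2$; indeed for $s_0<\Re(s)\leq s_0+2$ one should not expect pointwise absolute convergence everywhere, only almost everywhere and in $L^1$. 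The paper gets this by a global argument: by reduction theory the vectors of norm $m$ in $\mu+L$ fall into finitely many $\Gamma_K$-orbits; for a single orbit, Fubini and unfolding give
\begin{equation*}
\int_{\Gamma_K\bs\D}\sum_{\gamma\in\Gamma_{K,\lambda}\bs\Gamma_K}\phi(\gamma\lambda,z,s)\,d\mu(z)
=\vol\bigl(\Gamma_{K,\lambda}\bs H_\lambda(\R)\bigr)\int_{H_\lambda(\R)\bs H(\R)}\phi(h^{-1}\lambda,z_0,s)\,dh,
\end{equation*}
and the convergence of the last integral for $\Re(s)>s_0$ is the nontrivial input from Oda--Tsuzuki (Proposition 3.1.1), via the identification \eqref{eq:compot} with the secondary spherical function which you mention only in passing; a.e.\ convergence and integrability then follow, and monotone convergence justifies the earlier interchange of sum and integral a posteriori. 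Your local observation that $\phi(\lambda,z,s)$ has only a logarithmic-type singularity along $\D_\lambda$ controls the behavior near $Z(m,\mu)$ but says nothing about the global summability at the threshold, so as written your argument establishes the statement only for $\Re(s)>s_0+2$.
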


\begin{proof}
We first compute the theta integral $\Phi_{m,\mu}(z,h,s)$ formally. Afterwards we show the convergence of the infinite series representation in the statement of the theorem.
The interchange of integration and summation in the computation of the theta integral is then justified a posteriori by the theorem of monotone convergence.

Inserting the definitions and carrying out the integration over $u$, we obtain
\begin{align*}
\Phi_{m,\mu}(z,h,s)&=C(m,k,s)\\
&\phantom{=}{}\times
\int_{ (\R_{>0})^d}
\sum_{\substack{\lambda\in h(\mu+L)\\Q(\lambda)=m}}
\calM_s(-4\pi m_1 v_1)
e^{-4\pi Q(\lambda_{1z^\perp})v_1} v_1^{-1}(v_2\cdots v_d)^{\ell/2-2}\,dv.
\end{align*}
Here we have also used that $\vol(\calO_F\bs \R^d)=\sqrt{D}$.
In view of Proposition \ref{prop:thetagrowth} and \eqref{Masy1}, the integral converges for $\Re(s)>s_0+2$ and $(z,h)\in X_K\bs Z(m,\mu)$.

We put in the definition of $\calM_s$ and interchange integration and summation. We  find
\begin{align*}
\Phi_{m,\mu}(z,h,s)=C(m,k,s)
\sum_{\substack{\lambda\in h(\mu+L)\\Q(\lambda)=m}} &
\int_{0}^\infty
 \frac{M_{-k_1/2,\,s/2}(4\pi m_1 v_1)}{(4\pi m_1 v_1)^{k_1/2}}
e^{-2\pi m_1 v_1+4\pi Q(\lambda_{1z})v_1}\,\frac{dv_1}{v_1}\\
\times &
\int_{0}^\infty
e^{-4\pi m_2 v_2} v_2^{k_2-1}\,\frac{dv_2}{v_2}\cdots
\int_{0}^\infty
e^{-4\pi m_d v_d} v_d^{k_d-1}\,\frac{dv_d}{v_d}.
\end{align*}
Inserting the value of $C(m,k,s)$ and carrying out the integration over $v_2,\dots,v_d$, we obtain
\begin{align*}
\Phi_{m,\mu}(z,h,s)&=\frac{1}{\Gamma(s+1)(4\pi m_1)^{k_1/2}}\\
&\phantom{=}{}\times
\sum_{\substack{\lambda\in h(\mu+L)\\Q(\lambda)=m}}
\int_{0}^\infty
M_{-k_1/2,\,s/2}(4\pi m_1 v_1)
e^{-2\pi m_1 v_1+4\pi Q(\lambda_{1z})v_1} v_1^{-k_1/2}\,\frac{dv_1}{v_1}.
\end{align*}
The latter integral is a Laplace transform. It is equal to
\[
 \frac{(4\pi m_1)^{s/2+1/2}\Gamma(\frac{s}{2}+\frac{n}{4})}{(4\pi Q(\lambda_{1z^\perp}))^{s/2+n/4}}
 F\left(\frac{s}{2}+\frac{n}{4},\, \frac{s}{2}-\frac{n}{4}+1, \, s+1; \, \frac{m_1}{Q(\lambda_{1z^\perp})} \right),
\]
see e.g.~\cite{B2} p.~215 (11). Consequently,
\begin{align*}
\Phi_{m,\mu}(z,h,s)&=\frac{\Gamma(\frac{s}{2}+\frac{n}{4})}{\Gamma(s+1)}\\
&\phantom{=}{}\times
\sum_{\substack{\lambda\in h(\mu+L)\\Q(\lambda)=m}}
\left(\frac{m_1}{Q(\lambda_{1z^\perp})}\right)^{\frac{s}{2}+\frac{n}{4}}
 F\left(\frac{s}{2}+\frac{n}{4},\, \frac{s}{2}-\frac{n}{4}+1, \, s+1; \, \frac{m_1}{Q(\lambda_{1z^\perp})} \right).
\end{align*}

We now prove that the sum converges for $(z,h)$ outside a subset  of measure zero to an integrable function on $X_K$.
By reduction theory, the arithmetic group $\Gamma_K=H(\Q)\cap K$ acts with finitely many orbits on the set of $\lambda\in \mu+L$ with $Q(\lambda)=m$.
We consider for a fixed $\lambda\in \mu+L$ with $Q(\lambda)=m$ the sum
\begin{align*}
S(z):=\sum_{\substack{\gamma\in \Gamma_{K,\lambda}\bs \Gamma_K}}
\phi(\gamma\lambda,z,s),
\end{align*}
where $\Gamma_{K,\lambda}=H_\lambda(\Q)\cap K$ and $H_\lambda$ denotes the stabilizer of $\lambda$ in $H$.
It suffices to show that $S(z)$
converges outside a subset of measure zero and that $\int_{\Gamma_K\bs \D} S(z)\,d\mu(z)<\infty$.
According to  Fubini's theorem, we have
\[
\int_{\Gamma_K\bs \D} S(z)\,d\mu(z)=\int_{\Gamma_{K,\lambda}\bs \D} \phi(\lambda,z,s)\,d\mu(z),
\]
and the desired convergence statement follows if we show that the integral on the right hand side is finite.
Fixing a base point $z_0\in \D$ with $z_0\perp \lambda$, we may realize $\D$ as the coset space of $H(\R)$ modulo the maximal compact subgroup given by the stabilizer of $z_0$.
Hence it suffices to show that
\[
\int_{\Gamma_{K,\lambda}\bs H(\R)} \phi(\lambda,hz_0,s)\,dh<\infty,
\]
where $dh$ denotes a Haar measure on $H(\R)$.
The latter integral is equal to
\begin{align*}
\int_{\Gamma_{K,\lambda}\bs H(\R)} \phi(h^{-1}\lambda,z_0,s)\,dh
&=\int_{h\in\Gamma_{K,\lambda}\bs H_\lambda(\R)} \int_{h'\in H_\lambda(\R)\bs H(\R)} \phi(h'{}^{-1}h^{-1}\lambda,z_0,s)\,dh'\,dh\\
&=\vol(\Gamma_{K,\lambda}\bs H_\lambda(\R)) \int_{H_\lambda(\R)\bs H(\R)} \phi(h'{}^{-1}\lambda,z_0,s)\,dh'.
\end{align*}
The convergence of the latter integral for $\Re(s)>s_0$ is proved in \cite{OT}, Proposition 3.1.1 (see also \cite{BK}, Section 4.2, for a comparison of the different setups).
\end{proof}

Next we consider the singularities of $\Phi_{m,\mu}(z,h,s)$.

\begin{lemma}
\label{lem:fin}
Let $m\in  F$ be totally positive. i) For all $\lambda\in L'$ with $Q(\lambda)=m$ we have
$0< m_1\leq Q(\lambda_{1z^\perp})$, and
\[
\frac{m_1}{Q(\lambda_{1z^\perp})}= \frac{2m_1}{m_1+(Q(\lambda_{1z^\perp})-Q(\lambda_{1z}))}.
\]
ii) For any $\eps>0$ and any compact subset $C\subset \D$, there are only finitely many $\lambda\in L'$ with $Q(\lambda)=m$ and
$\eps< m_1/Q(\lambda_{1z^\perp})$ for some $z\in C$.
\end{lemma}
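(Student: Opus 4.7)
Proof plan for Lemma \ref{lem:fin}.

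For part (i), the key is the orthogonal decomposition $\lambda_1=\lambda_{1z^\perp}+\lambda_{1z}$ induced by $z\in\D$, which gives $m_1=Q(\lambda_1)=Q(\lambda_{1z^\perp})+Q(\lambda_{1z})$. Since $z$ is negative definite and $z^\perp$ is positive definite in $V_{\sigma_1}$, we have $Q(\lambda_{1z})\le 0$ and $Q(\lambda_{1z^\perp})\ge 0$. Because $m_1>0$, this forces $Q(\lambda_{1z^\perp})\ge m_1>0$, yielding the first claim. The stated identity follows by substituting $Q(\lambda_{1z})=m_1-Q(\lambda_{1z^\perp})$ into the denominator on the right-hand side:
\begin{equation*}
m_1+\bigl(Q(\lambda_{1z^\perp})-Q(\lambda_{1z})\bigr)=m_1+Q(\lambda_{1z^\perp})-(m_1-Q(\lambda_{1z^\perp}))=2Q(\lambda_{1z^\perp}),
\end{equation*}
so the right-hand side reduces to $m_1/Q(\lambda_{1z^\perp})$. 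This part is essentially a single computation.

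For part (ii), the plan is to show that the hypothesis places $\lambda$ in a bounded region of $V(\R)$, which forces finiteness since $L'$ is a lattice in $V(\R)$. The condition $\eps<m_1/Q(\lambda_{1z^\perp})$ rewrites as $Q(\lambda_{1z^\perp})<m_1/\eps$, and combined with part (i) it also bounds $Q(\lambda_{1z^\perp})-Q(\lambda_{1z})=2Q(\lambda_{1z^\perp})-m_1<m_1(2/\eps-1)$. The expression $\lambda_1\mapsto Q(\lambda_{1z^\perp})-Q(\lambda_{1z})$ is precisely the positive definite majorant on $V_{\sigma_1}$ attached to $z$; the family of these majorants depends continuously on $z\in\D$, so on a compact set $C$ they are uniformly equivalent to any fixed norm on $V_{\sigma_1}$. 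Therefore the component $\lambda_1$ stays in a bounded subset of $V_{\sigma_1}$ as $z$ ranges over $C$.

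For the remaining archimedean places, $V_{\sigma_i}$ is positive definite, and $Q(\lambda_i)=m_i$ is a fixed positive real number, so $\lambda_i$ lies on a compact sphere in $V_{\sigma_i}$ (independently of $z$). Combining the two, any $\lambda=(\lambda_1,\dots,\lambda_d)\in L'$ satisfying the hypothesis lies in a fixed bounded subset of $V(\R)=\bigoplus_i V_{\sigma_i}$. Since $L'\subset V(\R)$ is discrete (it is a $\Z$-lattice of full rank in the real vector space $V(\R)$ by Proposition~\ref{prop:lattice}), there are only finitely many such $\lambda$. The only mild subtlety is the uniform equivalence of the majorants as $z$ varies over $C$, which follows by a standard compactness argument and is the step I would state carefully rather than compute in detail.
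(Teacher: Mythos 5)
Your proof is correct and follows essentially the same route as the paper, which simply cites the orthogonal decomposition computation and the positive definiteness of the majorant $Q(\lambda_{1z^\perp})-Q(\lambda_{1z})$; you have merely written out the details, including the uniform comparison of the majorants over the compact set $C$ and the discreteness of $L'$ in $V(\R)$, which the paper leaves implicit.
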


\begin{proof}
This follows by a straightforward computation and the fact that $Q(\lambda_{1z^\perp})-Q(\lambda_{1z})$ is a positive definite quadratic form on $V_{\sigma_1}$.
\end{proof}

\begin{theorem}
\label{thm:lift3}
Let $\mu\in L'/L$, $m\in  \partial_F^{-1}+Q(\mu)$, and $m\gg 0$.
The series \eqref{eq:ser} and all its partial derivatives converge normally for $\Re(s)>s_0$ and $(z,h)\in X_K\setminus Z(m,\mu)$.
For any point $(z_0,h_0)\in \D\times H(\hat \Q)$ there is a neighborhood $U$ such that the function
\begin{align}
\label{eq:lift3}
\Phi_{m,\mu}(z,h,s)-
\sum_{\substack{\lambda\in h_0(\mu+L)\\Q(\lambda)=m\\ \lambda_1\perp z_0}}
\phi(\lambda,z,s)
\end{align}
is $C^\infty$ on $U$. Here the latter sum is finite.
\end{theorem}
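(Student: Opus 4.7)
Fix $(z_0,h_0)\in \D\times H(\hat\Q)$ and a small compact neighborhood $U_0\subset \D$ of $z_0$. Since the sum in \eqref{eq:ser} is $H(\Q)$-equivariant in $(z,h)$, it suffices to treat $\Phi_{m,\mu}(\cdot,h_0,s)$ as a function of $z$ near $z_0$. I would first apply Lemma~\ref{lem:fin}(ii) with $\eps=1/2$ and $C=U_0$ to the lattice $h_0(\mu+L)$ to see that the set
\[
S:=\big\{\lambda\in h_0(\mu+L):\ Q(\lambda)=m,\ m_1/Q(\lambda_{1z^\perp})\geq 1/2 \text{ for some } z\in U_0\big\}
\]
is finite. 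Split $S=S'\sqcup S''$ with $S'=\{\lambda\in S:\lambda_1\perp z_0\}$; this $S'$ is precisely the finite index set in \eqref{eq:lift3}. For each $\lambda\in S''$ one has $m_1/Q(\lambda_{1z_0^\perp})<1$, so by continuity and the finiteness of $S''$ one can shrink $U_0$ to a compact neighborhood $U\subset U_0$ of $z_0$ on which
\[
\max_{\lambda\in S''}\sup_{z\in U} m_1/Q(\lambda_{1z^\perp})\leq 1-\delta
\]
for some $\delta>0$.

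On $U$ one then writes
\[
\Phi_{m,\mu}(z,h_0,s)-\sum_{\lambda\in S'}\phi(\lambda,z,s)=\sum_{\lambda\in S''}\phi(\lambda,z,s)+\sum_{\substack{\lambda\in h_0(\mu+L),\,Q(\lambda)=m\\ \lambda\notin S}}\phi(\lambda,z,s).
\]
The first sum on the right is a finite sum of functions that are real-analytic in $z\in U$, since the argument $w_\lambda(z):=m_1/Q(\lambda_{1z^\perp})$ of the Gauss hypergeometric function in \eqref{eq:phi} stays in $[0,1-\delta]$ and $F(a,b,c;\cdot)$ is analytic there. Hence this part is $C^\infty$ on $U$, and the whole game reduces to controlling the tail sum over $\lambda\notin S$.

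For the tail, each term satisfies $w_\lambda(z)\leq 1/2$ on $U$. Standard estimates for $F(a,b,c;w)$ on $[0,1/2]$ give $|\phi(\lambda,z,s)|\leq C_1\cdot w_\lambda(z)^{\Re(s)/2+n/4}$ on $U$, and analogous bounds (with polynomial factors in $\|\lambda\|$ arising from the chain rule applied to $w_\lambda(z)$) hold for any partial derivative with respect to local coordinates on $\D$ or with respect to $s$. Normal convergence of the tail together with any fixed partial derivative therefore reduces to the uniform convergence on $U$ of a majorant of the shape $\sum_\lambda w_\lambda(z)^{\Re(s)/2+n/4}$, possibly weighted by polynomial factors in $\|\lambda\|$. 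Pointwise convergence of such a majorant for $\Re(s)>s_0$ was precisely what was established in the proof of Theorem~\ref{thm:lift2} by passing to the $H_\lambda(\R)\backslash H(\R)$-integral and invoking \cite{OT}, Proposition 3.1.1; applied with $U$ compact in place of a single base point, the same bound gives the required uniformity.

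The main technical obstacle is thus the upgrade from the pointwise (and $L^1$) convergence of Theorem~\ref{thm:lift2} to normal convergence on $U$ together with all its termwise partial derivatives. Lemma~\ref{lem:fin} effectively separates the finitely many ``near-singular'' indices in $S$ from the infinite tail where $w_\lambda(z)$ is uniformly bounded away from $1$ across $U$; once this separation is in place, estimating the derivatives of $\phi(\lambda,z,s)$ by derivatives of $w_\lambda(z)$ via the chain rule and dominating the result by the Oda--Tsuzuki majorant is a routine matter.
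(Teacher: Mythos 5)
Your proposal follows essentially the same route as the paper's proof: the same use of Lemma \ref{lem:fin} to isolate a finite set of near-singular lattice vectors on a compact neighborhood, the same splitting of that finite set into the vectors perpendicular to $z_0$ (the sum in \eqref{eq:lift3}) and finitely many others handled by analyticity of the hypergeometric function away from $w=1$, the same tail bound $\phi(\lambda,z,s)\ll Q(\lambda_{1z^\perp})^{-\Re(s)/2-n/4}$ coming from the power series of $F$, and the same appeal to the convergence argument of Theorem \ref{thm:lift2} (comparison with the $H_\lambda(\R)\backslash H(\R)$-integral and \cite{OT}) to get normal convergence of the majorant.

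The one place where your write-up, taken literally, would not deliver the stated range $\Re(s)>s_0$ is the treatment of derivatives: you allow ``polynomial factors in $\|\lambda\|$ arising from the chain rule'' and then claim the Oda--Tsuzuki majorant still dominates. On the quadric $Q(\lambda)=m$ the components $\lambda_2,\dots,\lambda_d$ are bounded, so $\|\lambda\|^2\asymp Q(\lambda_{1z^\perp})$, and a majorant of the form $\sum_\lambda \|\lambda\|^{N} Q(\lambda_{1z^\perp})^{-\Re(s)/2-n/4}$ converges only for $\Re(s)$ roughly larger than $s_0+N$, not for all $\Re(s)>s_0$. The correct observation (which the paper uses when it says the same bound holds for all iterated partials, with different constants) is that no polynomial loss actually occurs: $\phi(\lambda,z,s)$ depends on $z$ only through $w_\lambda(z)=m_1/Q(\lambda_{1z^\perp})$, and since $\partial_z Q(\lambda_{1z^\perp})$ is a quadratic form in $\lambda_1$ whose coefficients vary smoothly over the compact set $U$, the comparability of majorants on $U$ gives $|\partial_z Q(\lambda_{1z^\perp})|\ll Q(\lambda_{1z^\perp})$ and hence $|\partial_z^\alpha w_\lambda|\ll w_\lambda$ uniformly on $U$. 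Consequently every iterated partial derivative of $\phi(\lambda,z,s)$ obeys the same bound $\ll Q(\lambda_{1z^\perp})^{-\Re(s)/2-n/4}$, and the unweighted majorant suffices; with that correction your argument coincides with the paper's.
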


\begin{proof}
Replacing the lattice $L$ by $h_0 L$, we may assume without loss of generality that $h_0=1$.
The condition $\lambda_1\perp z_0$ means that $\lambda_1$ is contained in the $n$-dimensional positive definite subspace $z_0^\perp$ of $V_{\sigma_1}$.
Since $V_{\sigma_2},\dots ,V_{\sigma_d}$ are positive definite,  there are only finitely many $\lambda$ in the coset $\mu+L$ satisfying the condition under the sum in \eqref{eq:lift3}.

Let $U'\subset \D$ be a compact neighborhood of $z_0$. Let $S_1$ be the set of all $\lambda\in \mu+L$
with $Q(\lambda)=m$ and $ m_1/Q(\lambda_{1z^\perp})<1/2$ for all $z\in U'$.
Let $S_2$ be the set of all $\lambda\in \mu+L$
with $Q(\lambda)=m$ and $ m_1/Q(\lambda_{1z^\perp})\geq 1/2$ for some $z\in U'$.
Then we have for $h\in K\subset H(\hat \Q)$ that
\[
\Phi_{m,\mu}(z,h,s)
=
\sum_{\substack{\lambda\in h(\mu+L)\\Q(\lambda)=m}}\phi(\lambda,z,s)
=\sum_{\lambda\in S_1}\phi(\lambda,z,s)+\sum_{\lambda\in S_2}\phi(\lambda,z,s).
\]
According to Lemma \ref{lem:fin}, the sum over $S_2$ is finite.
Moreover,
\[
\sum_{\lambda\in S_2}\phi(\lambda,z,s)-\sum_{\substack{\lambda\in \mu+L\\Q(\lambda)=m\\ \lambda_1\perp z_0}}
\phi(\lambda,z,s)
\]
is a smooth function in a small neighborhood $U$ of $(z_0,1)$.

Hence it suffices to show that the sum over $S_1$ converges normally for $z\in U'$.
Using the power series expansion of the Gauss hypergeometric function, we see that
\[
\phi(\lambda,z,s)\ll Q(\lambda_{1z^\perp})^{-s/2-n/4}
\]
for all $\lambda\in S_1$ and all $z\in U'$. The same bound (with different implied constants) holds for all iterated partial derivatives of $\phi(\lambda,z,s)$.
Consequently, it suffices to show that
\[
\sum_{\substack{\lambda\in h(\mu+L)\\Q(\lambda)=m}}Q(\lambda_{1z^\perp})^{-s/2-n/4}
\]
converges normally on $U$. This can be proved by comparing with an integral as in the proof of Theorem \ref{thm:lift2}.
\end{proof}


Since the Green function $\Phi_{m,\mu}(z,h,s)$ belongs to
$L^1(X_K)$, we may view it as a current $[\Phi_{m,\mu,s}]$, that is,
as a functional on smooth top degree differential forms on $X_K$ with
compact support.  For a bounded $C^\infty$-function $\alpha$ on $X_K$, we write
\[
[\Phi_{m,\mu,s}](\alpha) = \int_{X_K}
\Phi_{m,\mu}(z,h,s)\alpha(z)\, d\mu(z).
\]
Applying the Laplace operator, we obtain another current $\Delta_\D
[\Phi_{m,\mu,s}]$, given by
\[
(\Delta_\D[\Phi_{m,\mu,s}])(\alpha) = \int_{X_K} \Phi_{m,\mu}(z,h,s)(\Delta_\D\alpha)(z)\, d\mu(z).
\]
Moreover, for any divisor $Y$ on $X_K$, there is a Dirac current
$\delta_Y$, given by
\[
\delta_Y(\alpha)= \int_{Y} \alpha(z) \Omega^{n-1}.
\]
We define the degree of the divisor $Y$ by
$\deg(Y)=\delta_Y(1)$, provided the integral converges.

\begin{remark}
\label{rem:deg0}
The degree of the first Chern class in $\ch^1(X_K)$ of the line bundle of modular forms of weight $w$ is given by  $\deg c_1(\calM_w) =w \vol(X_K)$.
\end{remark}

\begin{proof}
The Petersson metric defines a hermitean metric on the line bundle of modular forms of weight $w$. Its first Chern form is $w\Omega$. The remark follows from the Poincar\'e--Lelong lemma.
\end{proof}

\begin{theorem}
\label{thm:diffeq}
Let $\Re(s)>s_0$.  The Green current $[\Phi_{m,\mu,s}]$ satisfies the following  differential equation
\[
\Delta_\D[\Phi_{m,\mu,s}] = \frac{1}{8}\left(s^2-s_0^2\right)[\Phi_{m,\mu,s}] - \frac{n}{4\Gamma(\frac{s}{2}-\frac{s_0}{2}+1)}\delta_{Z(m,\mu)}.
\]
\end{theorem}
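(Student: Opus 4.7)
The plan is to first establish the equation pointwise away from $Z(m,\mu)$ and then determine the current correction coming from the divisor. Away from the divisor, Theorem \ref{thm:lift3} ensures that the series \eqref{eq:ser} for $\Phi_{m,\mu}(z,h,s)$ together with all its derivatives converges normally, so we may differentiate termwise. The pointwise eigenfunction equation \eqref{eq:diffphi} (which came from \cite{OT}, Proposition 2.4.2 via the relation \eqref{eq:compot}) then yields the identity
\[
\Delta_\D \Phi_{m,\mu}(z,h,s) = \tfrac{1}{8}(s^2-s_0^2)\Phi_{m,\mu}(z,h,s) \quad \text{on } X_K\setminus Z(m,\mu).
\]
Consequently the current $\Delta_\D[\Phi_{m,\mu,s}] - \tfrac{1}{8}(s^2-s_0^2)[\Phi_{m,\mu,s}]$ is supported on $Z(m,\mu)$, and it suffices to identify this correction current locally near a point $(z_0,h_0)$ of the divisor.

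By Theorem \ref{thm:lift3}, in a neighborhood of such a point the singular part of $\Phi_{m,\mu}$ is a finite sum of terms $\phi(\lambda,z,s)$ with $\lambda\in h_0(\mu+L)$, $Q(\lambda)=m$, $\lambda_1\perp z_0$. For each such $\lambda$, the divisor $\D_\lambda=\{z:z\perp\lambda_1\}$ is cut out locally in $\calK$ by the holomorphic equation $(Z,\lambda)=0$, so the argument $t=Q(\lambda_1)/Q(\lambda_{1z^\perp})$ of the hypergeometric function in \eqref{eq:phi} approaches $1$ precisely on $\D_\lambda$. Since the parameters of the Gauss function in \eqref{eq:phi} satisfy $c=a+b$, we are in the logarithmic case at $t=1$, and the standard expansion (Abramowitz--Stegun 15.3.10) gives
\[
F\bigl(\tfrac{s}{2}+\tfrac{n}{4},\tfrac{s}{2}-\tfrac{n}{4}+1,s+1;t\bigr) = -\frac{\Gamma(s+1)}{\Gamma(\frac{s}{2}+\frac{n}{4})\Gamma(\frac{s}{2}-\frac{n}{4}+1)}\log(1-t) + (\text{analytic in }1-t).
\]
Substituting $1-t = -Q(\lambda_{1z})/Q(\lambda_{1z^\perp})$ and inserting into \eqref{eq:phi}, we obtain the decomposition
\[
\phi(\lambda,z,s) = -\frac{1}{\Gamma(\frac{s}{2}-\frac{s_0}{2}+1)}\log\frac{-Q(\lambda_{1z})}{Q(\lambda_{1z^\perp})} + (\text{$C^\infty$ near }\D_\lambda).
\]

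The next step is to compute the distributional Laplacian of the singular logarithmic term. Using the tube-domain realization $\calH\to\calK$, $z\mapsto w(z)=z+a-Q(z)b$, and the hermitean metric $|Y|^2$ of \eqref{eq:pet1}, one verifies that $-Q(\lambda_{1z}) = |(w(z),\lambda)|^2/|Y|^2$ up to a positive smooth factor; the second factor $\log Q(\lambda_{1z^\perp})$ is itself smooth near $\D_\lambda$. Therefore the logarithmic singularity of $\phi(\lambda,z,s)$ is the same as that of $-\frac{1}{\Gamma(s/2-s_0/2+1)}\log|(w(z),\lambda)|^2$, and the Poincar\'e--Lelong formula applied to the holomorphic section $(w(z),\lambda)$ of $\calL^\vee$ gives $dd^c\log|(w(z),\lambda)|^2 = \delta_{\D_\lambda} - \Omega$ as currents, the $\Omega$-term being a smooth contribution already handled by the pointwise identity. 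Converting $dd^c$ to $\Delta_\D$ via the Kähler identity on $\D$ (which produces the normalizing factor $n/4$ with the conventions of \cite{Br2}) and summing the contributions over the finitely many $\lambda$ with $\lambda_1\perp z_0$, the total delta-current contribution at $(z_0,h_0)$ assembles into $-\tfrac{n}{4\Gamma(\frac{s}{2}-\frac{s_0}{2}+1)}\delta_{Z(m,\mu)}$, as desired.

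The main obstacle will be the bookkeeping of constants in the last paragraph: matching the normalizations of $\Delta_\D$, the Kähler form $\Omega$, the operator $d^c=\frac{1}{4\pi i}(\partial-\bar\partial)$, and the current $\delta_{Z(m,\mu)}(\alpha)=\int_{Z(m,\mu)}\alpha\,\Omega^{n-1}$, so that the Poincar\'e--Lelong delta-contribution produces exactly the factor $-n/4$. Everything else reduces to standard facts about hypergeometric functions and termwise differentiation of the normally convergent series \eqref{eq:ser}.
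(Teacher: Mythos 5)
Your proposal is correct in substance but takes a genuinely different route from the paper. The paper's proof is essentially a citation: using the comparison \eqref{eq:compot}, which identifies $\phi(\lambda,z,s)$ with Oda--Tsuzuki's secondary spherical function, it invokes \cite{OT}, Theorem 3.2.1 (3) and Corollary 3.2.1, and only remarks that the normalizations of $\Delta_\D$ and of the invariant measure are compared as in the proof of Theorem 4.7 of \cite{BK}; the local analysis at the divisor and the exact constant are thus outsourced to those references. You instead prove the current equation directly: termwise differentiation off $Z(m,\mu)$ via \eqref{eq:diffphi} and the normal convergence of Theorem \ref{thm:lift3}, then a local Poincar\'e--Lelong computation for the finitely many singular terms, based on the logarithmic case $c=a+b$ of the hypergeometric function and the identity $-Q(\lambda_{1z})=|(w(z),\lambda)|^2/(2|Y|^2)$. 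This is more self-contained and in fact mirrors what the paper itself does later at $s=s_0$ (Lemma \ref{lem:reglift}, Theorem \ref{thm:reglift}, Corollary \ref{cor:pl}), at the price of doing the constant bookkeeping yourself: the factor $n/4$ is precisely the conversion between $dd^c$ paired against $\Omega^{n-1}$ and $\Delta_\D$ in the normalization of \cite{Br2} (4.1), which you flag but do not verify (its consistency with Corollary \ref{cor:pl} and Remark \ref{thm:regdiffeq} confirms it is the right value). One small imprecision to fix: after extracting the leading $\log(1-t)$ term from \cite{AS} (15.3.10), the remainder is not analytic, and the remainder in your displayed decomposition of $\phi(\lambda,z,s)$ is not $C^\infty$ near $\D_\lambda$; it contains terms of type $(1-t)\log(1-t)$ and $(t^{a}-1)\log(1-t)$, which in a transverse holomorphic coordinate behave like $|w|^2\log|w|^2$ and are only $C^1$. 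This is harmless, since such terms contribute no current supported on the divisor, but the argument should state this rather than claim smoothness of the remainder.
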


\begin{proof}
In view of \eqref{eq:compot}, the result follows from \cite{OT}, Theorem 3.2.1 (3) and Corollary 3.2.1.
The comparison of the different normalizations of the Laplacian and the invariant measures is similar as in the proof of Theorem 4.7 of \cite{BK}.
\end{proof}

\subsection{The spectral expansion}

For $d=1$ (when $F=\Q$) a meromorphic continuation of the Green function $\Phi_{m,\mu}(z,h,s)$ is obtained in \cite{Br2},
employing Poincar\'e series built out of the functions $f_{m,\mu}(\tau,s)$ similarly as in Proposition \ref{prop:wm}.
Since the corresponding Poincar\'e series do not converge when $d>1$, we cannot argue this way. Instead, we use the approach of \cite{OT} \S6 and \cite{MW} \S4 to prove meromorphic continuation in the sense of distributions by means of spectral theory. Moreover, we refine the argument to obtain a meromorphic continuation as a smooth function on $X_K\setminus Z(m,\mu)$.

We begin by computing the spectral expansion of $\Phi_{m,\mu}(z,h,s)$.
Throughout this subsection we assume that $X_K$ is compact such that the Laplace operator has a discrete spectrum. This is always the case when $d>1$.

Recall that the Laplace operator  $-\Delta_\D$ gives rise to a densely defined self-adjoint operator on $L^2(X_K)$ which is positive.
Let $\Lambda\subset \R_{\geq 0}$ be the set of eigenvalues of $-\Delta_\D$. It is a countable set with no accumulation points. So we may write
$\Lambda=\{\lambda_k;\; k\in \Z_{\geq 0}\}$ with
\[
\lambda_0\leq \lambda_1\leq \lambda_2\leq \dots,
\]
where every $\lambda \in \R$ occurs with multiplicity $d(\lambda)$ given by the dimension of the corresponding eigenspace.
Let $\{\varphi_k\}\subset \C^\infty(X_K)$ be an orthonormal system of eigenfunctions such that $-\Delta_\D \varphi_k=\lambda_k\varphi_k$.
For any $k$ we choose $\alpha_k\in \C$ such that
\[
\lambda_k= -\frac{1}{8}\left( \alpha_k^2-s_0^2\right).
\]
Then $\alpha_k\in [-s_0,s_0]\cup i\R$.
For the eigenvalue $0$, the multiplicity $d(0)$ is equal to the number of connected components of $X_K$.
We choose the suitably normalized characteristic functions of the components of $X_K$ as an orthonormal basis of the eigenspace.

Any function $\varphi\in L^2(X_K)$ has a spectral decomposition
\begin{align}
\label{eq:spectral}
\varphi= \sum_{k=0}^\infty (\varphi,\varphi_k) \varphi_k,
\end{align}
where $(\varphi,\psi)=\int_{X_K} \varphi(z)\overline{\psi(z)}\,d\mu(z)$ denotes the scalar product on $L^2(X_K)$. The series converges in the $L^2$-norm.
We now compute this expansion for $\Phi_{m,\mu}(z,h,s)$.

\begin{theorem}
\label{thm:spectral}
Assume that $X_K$ is compact.
The Green function $\Phi_{m,\mu}(z,h,s)$ has the spectral expansion
\[
\Phi_{m,\mu}(z,h,s) = \frac{2n}{\Gamma(\frac{s}{2}-\frac{s_0}{2}+1)} \sum_{k=0}^\infty \frac{\delta_{Z(m,\mu)}(\overline{\varphi_k})}{s^2-\alpha_k^2}\cdot \varphi_k(z,h).
\]
\end{theorem}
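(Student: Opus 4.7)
The plan is to extract the spectral expansion directly from the current-theoretic differential equation of Theorem \ref{thm:diffeq}. Since $X_K$ is compact, the Green function $\Phi_{m,\mu}(z,h,s)$ is not merely in $L^1(X_K)$ but actually in $L^2(X_K)$: the only singularities are along the special divisor $Z(m,\mu)$, and by Theorem \ref{thm:lift3} the singular part is, locally, a finite sum of functions $\phi(\lambda,z,s)$ whose leading behavior near $Q(\lambda_{1z^\perp})=m_1$ is of logarithmic/power type integrable to any power. Consequently $\Phi_{m,\mu}(\cdot,h,s)$ admits a norm-convergent spectral expansion
\[
\Phi_{m,\mu}(z,h,s)=\sum_{k=0}^\infty c_k(s,h)\,\varphi_k(z,h),\qquad c_k(s,h)=\bigl(\Phi_{m,\mu}(\cdot,h,s),\varphi_k\bigr),
\]
and the proof reduces to identifying the Fourier coefficients $c_k(s,h)$.

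Next I test the distributional identity of Theorem \ref{thm:diffeq} against the smooth eigenfunction $\overline{\varphi_k}$. By the definition of $\Delta_\D[\Phi_{m,\mu,s}]$ in the excerpt and the formal self-adjointness of the Laplacian,
\[
(\Delta_\D[\Phi_{m,\mu,s}])(\overline{\varphi_k})=\int_{X_K}\Phi_{m,\mu}(z,h,s)\,(\Delta_\D\overline{\varphi_k})(z)\,d\mu(z)=\tfrac{1}{8}(\alpha_k^2-s_0^2)\,c_k(s,h),
\]
using $-\Delta_\D\varphi_k=\lambda_k\varphi_k$ together with the chosen parametrization $\lambda_k=-\tfrac{1}{8}(\alpha_k^2-s_0^2)$. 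On the other hand, the right hand side of Theorem \ref{thm:diffeq} applied to $\overline{\varphi_k}$ equals
\[
\tfrac{1}{8}(s^2-s_0^2)\,c_k(s,h)-\tfrac{n}{4\Gamma(\frac{s}{2}-\frac{s_0}{2}+1)}\,\delta_{Z(m,\mu)}(\overline{\varphi_k}).
\]
Equating and solving the resulting linear equation for $c_k(s,h)$ yields
\[
c_k(s,h)=\frac{2n}{\Gamma(\frac{s}{2}-\frac{s_0}{2}+1)(s^2-\alpha_k^2)}\,\delta_{Z(m,\mu)}(\overline{\varphi_k}),
\]
which substituted into the spectral expansion is the claim.

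The technical points to check are essentially bookkeeping. One must verify that the denominator $s^2-\alpha_k^2$ is non-zero for $\Re(s)>s_0$: this is immediate because $\alpha_k\in[-s_0,s_0]\cup i\R$, so $\alpha_k^2\le s_0^2<\Re(s^2)$ whenever $\Re(s)>s_0$ and $\Im(s)=0$, and more generally $s^2\notin(-\infty,s_0^2]$ for $\Re(s)>s_0$. The main obstacle is confirming that $\Phi_{m,\mu}(\cdot,h,s)$ lies in $L^2(X_K)$, so that the $L^2$-spectral theorem applies and pairing against eigenfunctions is justified; this follows from the local description of the singularities provided by Theorem \ref{thm:lift3} together with the explicit formula \eqref{eq:phi} for $\phi(\lambda,z,s)$, whose singularity along $\lambda_1^\perp$ is of subquadratic type and hence $L^2$-integrable transverse to a smooth divisor in the $n$-dimensional complex manifold $X_K$ ($n\ge 1$). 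Everything else is a routine manipulation of the current equation from Theorem \ref{thm:diffeq}.
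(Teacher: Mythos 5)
Your argument is correct and follows exactly the paper's route: the paper proves Theorem \ref{thm:spectral} as an ``immediate consequence'' of the current equation in Theorem \ref{thm:diffeq} and the $L^2$-spectral decomposition \eqref{eq:spectral}, which is precisely your pairing of the distributional identity with $\overline{\varphi_k}$ and solving for the coefficients. Your additional verification that $\Phi_{m,\mu}(\cdot,h,s)$ lies in $L^2(X_K)$ (via the logarithmic nature of the singularity along $Z(m,\mu)$) and that $s^2\neq\alpha_k^2$ for $\Re(s)>s_0$ simply makes explicit what the paper leaves implicit.
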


\begin{proof}
This is an immediate consequence of Theorem \ref{thm:diffeq} and \eqref{eq:spectral}.
\end{proof}

\begin{corollary}
\label{cor:res}
As a distribution on $X_K$, the Green function $\Phi_{m,\mu}(z,h,s)$ has a meromorphic continuation in $s$ to the whole complex plane.
At $s=s_0$ it has a simple pole with residue
\begin{align}
\label{eq:A}
A(m,\mu):=2\frac{\deg(Z(m,\mu))}{\vol(X_K)}.
\end{align}
\end{corollary}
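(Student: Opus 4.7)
The corollary follows directly from the spectral expansion in Theorem \ref{thm:spectral}, once it is interpreted as an equality of distributions on $X_K$. My plan has two parts: first, establish that the series on the right-hand side extends meromorphically in $s$ to all of $\C$ in the distributional sense; second, extract the residue at $s=s_0$ from the zero-eigenvalue contribution.

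For the continuation, I would pair against a test function $\alpha\in C^\infty(X_K)$ and study
\[
[\Phi_{m,\mu,s}](\alpha) = \frac{2n}{\Gamma(\tfrac{s-s_0}{2}+1)} \sum_{k=0}^\infty \frac{\delta_{Z(m,\mu)}(\overline{\varphi_k})\,(\varphi_k, \bar\alpha)}{s^2-\alpha_k^2}.
\]
Two standard estimates from spectral theory on the compact manifold $X_K$ suffice. Integrating by parts repeatedly against $-\Delta_\D$, and using that $\alpha$ is smooth, yields $|(\varphi_k,\bar\alpha)|=O(\lambda_k^{-M})$ for every $M\geq 0$. On the other hand, the current of integration $\delta_{Z(m,\mu)}$ is a distribution of finite order, so elliptic regularity together with Sobolev bounds of the form $\|\varphi_k\|_{C^r}\ll \lambda_k^{r/2+n/2}$ gives $|\delta_{Z(m,\mu)}(\bar\varphi_k)|\ll \lambda_k^A$ for some fixed $A$. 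Combined with the lower bound $|s^2-\alpha_k^2|\gg \lambda_k$, valid for $\lambda_k$ large on any compact subset of $\C\setminus\{\pm\alpha_k\}$, and Weyl's law controlling the density of eigenvalues, this yields absolute convergence of the series on such compacta. Each summand is meromorphic in $s$ with at most a simple pole at $s=\pm\alpha_k$, so the sum defines a meromorphic distribution on all of $\C$.

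For the residue, only indices with $\alpha_k=\pm s_0$, i.e.\ $\lambda_k=0$, contribute a pole at $s=s_0$. An orthonormal basis of this eigenspace is furnished by the normalized characteristic functions $\varphi_0^{(j)}=\vol(X_K^{(j)})^{-1/2}\chi_{X_K^{(j)}}$, one for each connected component. Since $\Gamma(\tfrac{s-s_0}{2}+1)=1$ and $\Res_{s=s_0}(s^2-s_0^2)^{-1}=1/(2s_0)$ at $s=s_0$, and since $k_1=(2-n)/2$ gives $s_0=n/2$, the overall prefactor simplifies to $2n/(2s_0)=2$. Using that all components have equal volume $\vol(X_K)/N$ (recorded after \eqref{decomp2}) and that $Z(m,\mu)$ meets each component in a divisor of equal degree $\deg(Z(m,\mu))/N$ (which follows from the transitive action of the adelic group on components together with the $F$-rationality of the cycle), a straightforward calculation collapses the sum over $j$ into the constant distribution $2\deg(Z(m,\mu))/\vol(X_K)$, proving the claim. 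The main obstacle is the careful bookkeeping in the first step: the spectral series itself converges only in the $L^2$ sense and in general diverges pointwise, so some work is needed to upgrade it to a genuine meromorphic continuation in the space of distributions on $X_K$.
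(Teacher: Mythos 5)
Your proposal follows the paper's own proof in structure: meromorphic continuation via the spectral expansion of Theorem \ref{thm:spectral}, then the residue at $s=s_0$ extracted from the zero-eigenvalue terms, using that the components of $X_K$ have equal volume and that $\delta_{Z(m,\mu)}$ pairs equally with each component, with the same prefactor computation $2n/(2s_0)=2$. The only difference is cosmetic: the paper obtains the continuation as a locally uniform limit in $L^2(X_K)$ via the Bessel inequality, whereas you test against smooth functions using bounds of the type in Lemma \ref{lem:uniconv}; both are adequate, and note that the equal-degree-per-component fact is simply asserted in the paper as well (your appeal to an ``adelic action on components'' of the fixed $X_K$ is not literally an action, so, like the paper, you are really invoking an unproved but standard equidistribution statement).
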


\begin{proof}
Using the Bessel inequality, one sees that the spectral expansion of $\Phi_{m,\mu}(z,h,s)$ given in Theorem \ref{thm:spectral} converges locally uniformly for
$s\in\C$ with $s\neq \pm\alpha_k$ to an element of $L^2(X_K)$.
This proves the meromorphic continuation.

The singularity at $s=s_0$ comes from the terms in the spectral expansion with eigenvalue $\lambda_k=0$. The sum over the remaining terms is holomorphic at $s_0$. Hence, the singularity is given by
\begin{align}
\label{res1}
\frac{2n}{\Gamma(\frac{s}{2}-\frac{s_0}{2}+1)}\sum_{\substack{k\geq 0\\ \lambda_k=0}}\frac{\delta_{Z(m,\mu)}(\overline{\varphi_k})}{s^2-s_0^2}\cdot \varphi_k.
\end{align}
The eigenfunctions $\varphi_k$ contributing to this sum are the characteristic functions of the components of $X_K$ multiplied by the normalizing factor
 $(d(0)/\vol(X_K))^{1/2}$.
It is easily seen that for such eigenfunctions $\delta_{Z(m,\mu)}(\overline{\varphi_k})=(d(0)\vol(X_K))^{-1/2}\delta_{Z(m,\mu)}(1)$.
Hence the sum \eqref{res1} is equal to
\[
\frac{2n(s^2-s_0^2)^{-1}}{\Gamma(\frac{s}{2}-\frac{s_0}{2}+1)}\cdot\frac{\deg(Z(m,\mu))}{\vol(X_K)}.
\]
This function has a simple pole at $s=s_0$ with the claimed residue.
\end{proof}

\begin{remark}
The spectral expansion also implies that $\Gamma(\frac{s}{2}-\frac{s_0}{2}+1)\Phi_{m,\mu}(z,h,s)$ is invariant under the substitution $s\mapsto -s$.
\end{remark}

We now refine the above argument, to obtain a meromorphic continuation in $s$ of  $\Phi_{m,\mu}(z,h,s)$ as a continuous  {\em function} on $X_K\setminus Z(m,\mu)$. Then the distribution differential equation of Theorem~\ref{thm:diffeq} and the elliptic regularity theorem imply that $\Phi_{m,\mu}(z,h,s)$ is actually real analytic on $X_K\setminus Z(m,\mu)$. The following lemma is known, see e.g. \cite{Shubin}, Proposition 10.2. We include it here for completeness.

\begin{lemma}
\label{lem:uniconv}
Assume the notation of the beginning of this subsection.
\begin{enumerate}
\item If $t>n$, then the series
$\sum_{k\geq 0} (\lambda_k+1)^{-t}$
converges.
\item For any integer $N>n$, there is a constant $C>0$ such that for all $k\geq 0$ we have
\[
\|\varphi_k\|_\infty \leq C (\lambda_k+1)^{N}.
\]
\item
If $\psi\in C^\infty(X_K)$, then for any $N\in \Z_{\geq 0}$
and for all $k\in \Z_{\geq 0}$ we have
\[
|(\psi,\varphi_k)|\leq  (\lambda_k+1)^{-N} \|(-\Delta_\D+1)^N \psi\|_2.
\]
\item
If $\psi\in C^\infty(X_K)$, then the spectral expansion \eqref{eq:spectral} converges uniformly towards $\psi$.
\end{enumerate}
\end{lemma}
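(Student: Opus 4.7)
The plan is to treat parts (1)--(4) as standard Sobolev/spectral estimates and then glue them together for (4). The underlying input is Weyl's law together with self-adjointness of $-\Delta_\D+1$ and Sobolev embedding on the compact Riemannian manifold $X_K$, whose real dimension is $2n$.

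For (1), I would invoke Weyl's law for the Laplace--Beltrami operator on a compact Riemannian manifold of real dimension $2n$, which gives $N(T) := \#\{k \geq 0 : \lambda_k \leq T\} \sim c_{X_K}\, T^{n}$ as $T\to\infty$, hence $\lambda_k\gg k^{1/n}$ for $k$ large. Then
\[
\sum_{k\geq 0}(\lambda_k+1)^{-t} \ll \sum_{k\geq 1} k^{-t/n}<\infty \iff t>n.
\]

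For (2), the idea is Sobolev embedding. Set $P=-\Delta_\D+1$; it is elliptic, self-adjoint and positive. For the Sobolev norms $\|\psi\|_{(2N)}^2 := (P^{2N}\psi,\psi)$ one has $\|\psi\|_{(2N)}\asymp \|\psi\|_{H^{2N}(X_K)}$ by ellipticity. The classical Sobolev embedding theorem on the compact manifold $X_K$ of real dimension $2n$ gives $H^s(X_K)\hookrightarrow C^0(X_K)$ for $s>n$, hence for $2N>n$ (a fortiori for $N>n$) there is a constant $C$ with $\|\varphi\|_\infty\leq C\|\varphi\|_{H^{2N}}$. Applying this to $\varphi_k$ and using $P^{2N}\varphi_k=(\lambda_k+1)^{2N}\varphi_k$ and $\|\varphi_k\|_2=1$ yields
\[
\|\varphi_k\|_\infty \leq C\,\|\varphi_k\|_{(2N)} = C\,(\lambda_k+1)^N\,\|\varphi_k\|_2 = C\,(\lambda_k+1)^N.
\]

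For (3), the key input is self-adjointness of $P$ on its natural domain, which certainly contains $C^\infty(X_K)\supset\{\varphi_k\}$. Iterating $(P\psi,\varphi_k)=(\psi,P\varphi_k)=(\lambda_k+1)(\psi,\varphi_k)$ gives $(P^N\psi,\varphi_k)=(\lambda_k+1)^N(\psi,\varphi_k)$. Cauchy--Schwarz then yields
\[
(\lambda_k+1)^N|(\psi,\varphi_k)| = |(P^N\psi,\varphi_k)| \leq \|P^N\psi\|_2\cdot\|\varphi_k\|_2 = \|(-\Delta_\D+1)^N\psi\|_2,
\]
which is the desired estimate.

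For (4), I combine the three estimates. Fix an integer $N_0>n$, and use (2) with this choice to bound $\|\varphi_k\|_\infty \leq C(\lambda_k+1)^{N_0}$. For any integer $M\geq 0$, use (3) with $N = N_0+n+1+M$ to bound $|(\psi,\varphi_k)|$. Then termwise
\[
\big|(\psi,\varphi_k)\,\varphi_k(z,h)\big| \leq C\,\|P^{N_0+n+1+M}\psi\|_2\,(\lambda_k+1)^{-(n+1+M)}.
\]
By (1), the sum $\sum_k(\lambda_k+1)^{-(n+1+M)}$ converges (since $n+1+M>n$), so the partial sums of $\sum_k(\psi,\varphi_k)\varphi_k$ form a uniform Cauchy sequence, hence converge uniformly on $X_K$ to a continuous function. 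Since the series converges in $L^2$ to $\psi$ by \eqref{eq:spectral}, the uniform limit must equal $\psi$ (up to a set of measure zero, and both sides are continuous). Taking $M$ arbitrary large also shows uniform convergence of all derivatives (applied to the smooth $\psi$), though only the $C^0$ statement is needed here. The only mildly non-trivial ingredient is Weyl's law for (1); everything else is bookkeeping with Sobolev norms and self-adjointness, so I expect no serious obstacle.
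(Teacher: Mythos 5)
Your proof is correct and takes essentially the same route as the paper: Weyl's law for (1), a sup-norm bound on the eigenfunctions for (2), self-adjointness plus Cauchy--Schwarz for (3), and a termwise combination of the three estimates for (4). The only cosmetic difference is that in (2) you use Sobolev embedding and elliptic estimates where the paper cites the continuity of the kernel of $(-\Delta_\D+1)^{-N}$ for $N>n$; these are equivalent standard facts (cf.\ Shubin, Proposition 10.2), so nothing essential changes.
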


\begin{proof}
The first assertion is a consequence of Weyl's law which states that
\[
\#\{k;\; \lambda_k\leq x\}\sim cx^{\delta/2}, \quad x\to \infty,
\]
where $c>0$ is a constant and
$\delta$ is the dimension (over $\R$) of the compact real Riemann manifold $X_K$.

The second assertion follows from the fact that for any integer $N>\delta/2$ the pseudo differential operator $(-\Delta_\D+1)^{-N}$ has a continuous kernel function in $C(X_K\times X_K)$.

The third statement is an easy consequence of the self-adjointness of $-\Delta_\D$ and the Cauchy-Schwartz inequality. Finally, the last statement follows from (1), (2) and (3).
\end{proof}

\begin{theorem}
\label{thm:smoothcont}
For $(z,h)\in X_K\setminus Z(m,\mu)$, the Green function $\Phi_{m,\mu}(z,h,s)$ has a meromorphic continuation in $s$ to the whole complex plane. For fixed $s$ outside the set of poles, the resulting function in $(z,h)$ is real analytic  on $X_K\setminus Z(m,\mu)$.
\end{theorem}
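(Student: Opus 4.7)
My plan is to combine the distributional meromorphic continuation from Corollary \ref{cor:res} with the hypoellipticity of $\Delta_\D$, thereby upgrading it to a continuation by real analytic functions on $X_K\setminus Z(m,\mu)$.

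First I would extend the differential equation of Theorem \ref{thm:diffeq} to all $s\in\C$. Both sides are meromorphic in $s$ with values in the space of distributions on $X_K$: the left hand side by Corollary \ref{cor:res} together with the fact that $\Delta_\D$ preserves meromorphicity, and the right hand side because $\tfrac18(s^2-s_0^2)$ is a polynomial and $\Gamma(\tfrac{s}{2}-\tfrac{s_0}{2}+1)^{-1}$ is entire. Since equality holds for $\Re(s)>s_0$, it holds as an identity of meromorphic families of distributions. Restricting to the open subset $X_K\setminus Z(m,\mu)$, where $\delta_{Z(m,\mu)}$ vanishes, this becomes the homogeneous elliptic eigenvalue equation
\[
\Delta_\D[\Phi_{m,\mu,s}] = \tfrac{1}{8}(s^2-s_0^2)[\Phi_{m,\mu,s}].
\]
Because $\Delta_\D$ is elliptic with real analytic coefficients on the real analytic manifold $X_K\setminus Z(m,\mu)$, the regularity theorem for elliptic operators with analytic coefficients (Morrey--Nirenberg) implies that every distributional solution is represented by a real analytic function. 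Thus for each $s$ outside the pole set, $[\Phi_{m,\mu,s}]$ is a real analytic function on $X_K\setminus Z(m,\mu)$.

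It remains to promote the meromorphic $s$-dependence from the distributional to the functional level. For a large integer $M$ I would introduce the regularized spectral series
\[
\Psi_M(z,h,s) := \sum_{k\geq 0} \frac{\delta_{Z(m,\mu)}(\overline{\varphi_k})}{(s^2-\alpha_k^2)(\lambda_k+1)^M}\,\varphi_k(z,h).
\]
Using parts (1) and (2) of Lemma \ref{lem:uniconv} together with the crude bound $|\delta_{Z(m,\mu)}(\overline{\varphi_k})|\leq \vol(Z(m,\mu))\|\varphi_k\|_\infty$, one checks that for $M$ chosen large (depending on an $a$ $priori$ prescribed order of regularity), the series and its iterated Laplace derivatives up to that order converge uniformly in $(z,h)\in X_K$, locally uniformly in $s$ off $\{\pm\alpha_k\}$. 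By Sobolev embedding, $\Psi_M(\cdot,s)$ is then $C^r$ on $X_K$ with $r$ as large as desired, meromorphic in $s$; and elliptic regularity applied to the inhomogeneous equation satisfied by $\Psi_M$ on $X_K\setminus Z(m,\mu)$ bootstraps this to joint real analyticity in $(z,h)$ there. Finally, Theorem \ref{thm:spectral} gives, initially as distributions on $X_K$ and therefore as identities of real analytic functions on $X_K\setminus Z(m,\mu)$,
\[
\Phi_{m,\mu}(z,h,s) = \frac{2n}{\Gamma(\tfrac{s}{2}-\tfrac{s_0}{2}+1)}\,(-\Delta_\D+1)^M\Psi_M(z,h,s),
\]
which exhibits the claimed meromorphic continuation as a real analytic function.

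The main obstacle is exactly this last step: traversing the gap between distributional meromorphic continuation on $X_K$ and functional meromorphic continuation on $X_K\setminus Z(m,\mu)$. The $(\lambda_k+1)^{-M}$ regularization on the spectral side only produces a finite amount of smoothness in $(z,h)$, which must then be converted into the required smoothness of $\Phi_{m,\mu}$ by one application of the elliptic operator $(-\Delta_\D+1)^M$; balancing the convergence lost by differentiating against the convergence gained by the weight $(\lambda_k+1)^{-M}$, and invoking hypoellipticity with analytic coefficients to promote finite smoothness to real analyticity, is the technical heart of the argument.
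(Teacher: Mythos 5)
Your first step (meromorphically continuing the identity of Theorem \ref{thm:diffeq} as distributions, restricting to $X_K\setminus Z(m,\mu)$, and invoking analytic elliptic regularity for each fixed $s$) is sound and coincides with how the paper finishes its proof. The genuine gap is exactly the step you flag as the ``technical heart'': your mechanism for passing from the distributional continuation of Corollary \ref{cor:res} to a pointwise meromorphic continuation does not close. With only the ingredients you invoke, the spectral coefficients satisfy $|\delta_{Z(m,\mu)}(\overline{\varphi_k})|\leq C(\lambda_k+1)^{N}$ (Lemma \ref{lem:uniconv}(2)) and $|s^2-\alpha_k^2|\gg \lambda_k+1$, so the terms of $\Psi_M$ are $O((\lambda_k+1)^{2N-1-M})$ in sup norm and termwise application of $\Delta_\D^{\,j}$ costs a factor $\lambda_k^{\,j}$: uniform convergence of the differentiated series requires $M>2N+n-1+j$, while recovering $\Phi_{m,\mu}$ requires $j=M$. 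The shortfall is therefore independent of $M$; the weight $(\lambda_k+1)^{-M}$ is cancelled \emph{exactly} by $(-\Delta_\D+1)^M$, and what remains is the series of Theorem \ref{thm:spectral}, whose uniform convergence is precisely what is not known. Consequently your final displayed formula holds only as an identity of distributions, and it does not ``exhibit the claimed meromorphic continuation as a real analytic function'': for fixed $(z,h)\in X_K\setminus Z(m,\mu)$ the meromorphy of $s\mapsto \Phi_{m,\mu}(z,h,s)$ is never established. (It could be rescued by proving interior elliptic estimates that are locally uniform in the parameter $s$ and then upgrading weak holomorphy to pointwise holomorphy, but you do not carry this out.)

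The paper closes this gap differently: it introduces a cutoff $\sigma(m_1/Q(\lambda_{1z^\perp}))$ inside the Poincar\'e series, writing $\Phi_{m,\mu}=(\Phi_{m,\mu}-\tilde\Phi_{m,\mu})+\tilde\Phi_{m,\mu}$, where the difference is a locally finite sum (hence entire in $s$ and smooth off $Z(m,\mu)$, as in Theorem \ref{thm:lift3}), while $\tilde\Phi_{m,\mu}$ is smooth on all of $X_K$ and satisfies an inhomogeneous equation whose source term $F(z,h,s)$ is a locally finite, smooth, $s$-entire sum. This is what makes Lemma \ref{lem:uniconv}(3) available: the coefficients $(F(\cdot,s),\varphi_k)$ decay faster than any power of $\lambda_k$, so the spectral series for $\tilde\Phi_{m,\mu}$ converges locally uniformly in $(s,z,h)$ as a series of continuous functions, giving the pointwise meromorphic continuation directly; elliptic regularity is then used only at the very end, as in your first paragraph. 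In short, the smoothing has to happen on the geometric side (cutting off the singular terms of the Poincar\'e series), not on the spectral side, because spectral mollification is undone by the differential operator needed to recover $\Phi_{m,\mu}$.
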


\begin{proof}
Let $\sigma:[0,1]\to \R$ be a monotonous $C^\infty$-function such that $\sigma(t)=1$ for $t\leq 1/2$ and $\sigma(t)=0$ for $t\geq 3/4$.
Besides the Poincar\'e series $\Phi_{m,\mu}(z,h,s)$ (see Theorem \ref{thm:lift2}), we consider the Poincar\'e series
\begin{align*}
\tilde\Phi_{m,\mu}(z,h,s)
&=
\sum_{\substack{\lambda\in h(\mu+L)\\Q(\lambda)=m}}
\sigma\left(\frac{m_1}{Q(\lambda_{1z^\perp})} \right)\phi(\lambda,z,s),\\
F(z,h,s)&=
\sum_{\substack{\lambda\in h(\mu+L)\\Q(\lambda)=m}}
\left[\Delta_\D\left(\sigma\left(\frac{m_1}{Q(\lambda_{1z^\perp})}\right)\phi(\lambda,z,s)\right)-\sigma\left(\frac{m_1}{Q(\lambda_{1z^\perp})}\right)\Delta_\D\left(\phi(\lambda,z,s)\right)\right].
\end{align*}

The difference of $\Phi_{m,\mu}(z,h,s)$ and $\tilde\Phi_{m,\mu}(z,h,s)$ is the Poincar\'e series
\[
\sum_{\substack{\lambda\in h(\mu+L)\\Q(\lambda)=m}}
\left(1-\sigma\left(\frac{m_1}{Q(\lambda_{1z^\perp})} \right)\right)\phi(\lambda,z,s).
\]
As in the proof of Theorem \ref{thm:lift3} we see that it is locally finite, that is, for $(z,h)$ in any compact subset of $\D\times H(\hat \Q)$, only finitely many terms are non-zero. Hence it defines a holomorphic function for all $s\in \C$, which is smooth for $(z,h)\in X_K\setminus Z(m,\mu)$.
We now show that $\tilde\Phi_{m,\mu}(z,h,s)$ has a meromorphic continuation in $s$ which is continuous for  $(z,h)\in X_K$. This implies the desired continuation of $\Phi_{m,\mu}(z,h,s)$.

As in the proof of Theorem \ref{thm:lift3} we see that $\tilde\Phi_{m,\mu}(z,h,s)$ converges normally for $\Re(s)>s_0$ and defines a smooth function on $X_K$.
The Poincar\'e series $F(z,h,s)$ is locally finite and defines a smooth function, which is holomorphic in $s$ on the whole complex plane.
Moreover, the differential equation \eqref{eq:diffphi}
implies that
\begin{align}
\label{deq}
\Delta_\D  \tilde\Phi_{m,\mu}(z,h,s)= \frac{1}{8}\left(s^2-s_0^2\right)\tilde\Phi_{m,\mu}(z,h,s) + F(z,h,s)
\end{align}
for $\Re(s)>s_0$.
Hence the coefficients of the spectral expansion of $\tilde \Phi_{m,\mu}(z,h,s)$ are given by
\[
(\tilde\Phi_{m,\mu}(\cdot,s), \varphi_k)= \frac{8}{\alpha_k^2-s^2} (F(\cdot,s),\varphi_k),
\]
and we have
\[
\tilde\Phi_{m,\mu}(z,h,s) = \sum_{k= 0}^\infty  \frac{8}{\alpha_k^2-s^2} (F(\cdot,s),\varphi_k) \varphi_k(z,h).
\]
Lemma  \ref{lem:uniconv} implies that the series converges locally uniformly for $s\in \C$ and $(z,h)\in X_K$. Consequently, it defines a meromorphic continuation in $s$ which is continuous in $(z,h)$.

Now the distribution differential equation of Theorem~\ref{thm:diffeq} and the elliptic regularity theorem imply that for fixed $s$ outside the set of poles, $\Phi_{m,\mu}(z,h,s)$ is actually real analytic for $(z,h)\in X_K\setminus Z(m,\mu)$.
\end{proof}

\subsection{Regularized Green functions}

Here we define the regularized theta lift of a {\em harmonic} Whittaker form, that is, a Whittaker form with parameter $s_0$.
We determine the singularities of the lift.
In this subsection we do not  have to assume that $X_K$ is compact. So we come back to the general setup of Section \ref{sect2}.

\begin{definition}
Let $f\in  H_{k,\bar\rho_L}$ and write
\begin{align}
\label{eq:fdecomp}
f=\sum_{\mu\in L'/L} \sum_{m\gg 0} c(m,\mu)f_{m,\mu}(\tau).
\end{align}
We define the regularized theta lift $\Phi(z,h,f)$ of $f$ to be the constant term in the Laurent expansion at $s=s_0$ of
\[
\Phi(z,h,s,f):=\sum_{\mu\in L'/L} \sum_{m\gg 0} c(m,\mu)\Phi_{m,\mu}(z,h,s).
\]
\end{definition}

If $\mu\in L'/L$ and $m\in  \partial_F^{-1}+Q(\mu)$ is totally positive, we briefly write $\Phi_{m,\mu}(z,h)$ for the regularized theta lift of the harmonic Whittaker form $f_{m,\mu}(\tau)$, that is, for the constant term in the Laurent expansion of $\Phi_{m,\mu}(z,h,s)$ at $s=s_0$.

For a harmonic Whittaker form $f\in  H_{k,\bar\rho_L}$ as in \eqref{eq:fdecomp} we define a divisor $Z(f)\in \Div(X_K)_\C$ by
\begin{align}
\label{eq:divf}
Z(f)=\sum_{\mu\in L'/L} \sum_{m\gg 0} c(m,\mu)Z(m,\mu).
\end{align}
Moreover, by means of the quantities $A(m,\mu)$ of Corollary \ref{cor:res}
we define
\begin{align}
\label{eq:Af}
A(f)=\sum_{\mu\in L'/L} \sum_{m\gg 0} c(m,\mu)A(m,\mu).
\end{align}
In view of Corollary \ref{cor:res} we have
\begin{align}
\Phi(z,h,f)= \lim_{s\to s_0} \left(\Phi(z,h,s,f)-\frac{ A(f)}{s-s_0} \right).
\end{align}

If $Y$ is an irreducible Cartier divisor on a normal complex space $X$, we say that a real analytic function $F$ on $X\setminus Y$ has a logarithmic singularity along $Y$, if for any $x\in Y$ there is a neighborhood $U\subset X$ and a local equation $G$ for $Y$ such that $F-\log|G|$ can be continued to a real analytic function on $U$. We extend this definition $\C$-linearly to $\Div(X_K)_\C$.

\begin{theorem}
\label{thm:reglift}
Assume that $n>0$. For $f\in  H_{k,\bar\rho_L}$
the function $\Phi(z,h,f)$ is real analytic on $X_K\setminus Z(f)$. It has a logarithmic singularity along the divisor $-2Z(f)$.
\end{theorem}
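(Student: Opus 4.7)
The plan is to combine the meromorphic continuation result (Theorem~\ref{thm:smoothcont}) with the local structure result (Theorem~\ref{thm:lift3}), and to compute explicitly the local form of $\phi(\lambda,z,s_0)$ near $\lambda_1\perp z$. For real analyticity away from $Z(f)$, pick $(z_0,h_0)$ outside the support of every $Z(m,\mu)$ with $c(m,\mu)\neq 0$. By Theorem~\ref{thm:smoothcont}, each $\Phi_{m,\mu}(z,h,s)$ is meromorphic in $s$ with values in real analytic functions in $(z,h)$ on a neighborhood of $(z_0,h_0)$, with at worst a simple pole at $s=s_0$ whose residue $A(m,\mu)$ is constant in $(z,h)$. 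Consequently the Laurent constant term at $s=s_0$ of the finite sum $\Phi(z,h,s,f)$ is real analytic there.

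For a point $(z_0,h_0)\in\operatorname{supp}Z(f)$, Theorem~\ref{thm:lift3} gives, for each $(m,\mu)$ occurring in $f$, a decomposition
\[
\Phi_{m,\mu}(z,h,s)=\sum_{\substack{\lambda\in h_0(\mu+L)\\ Q(\lambda)=m,\;\lambda_1\perp z_0}}\phi(\lambda,z,s)+R_{m,\mu}(z,h,s),
\]
where $R_{m,\mu}$ is $C^\infty$ in $(z,h)$ near $(z_0,h_0)$. Since the prefactor $\Gamma(s/2+n/4)/\Gamma(s+1)$ of $\phi$ is regular at $s=s_0$ for $n>0$, each $\phi(\lambda,z,s)$ is holomorphic at $s_0$, and the simple pole of $\Phi_{m,\mu}$ is absorbed into $R_{m,\mu}$. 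Thus the singular behavior of $\Phi_{m,\mu}(z,h)$ near $(z_0,h_0)$ is given by $\sum_\lambda\phi(\lambda,z,s_0)$ plus a real analytic function.

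The heart of the proof is the local expansion of $\phi(\lambda,z,s_0)$. At $s=s_0=n/2$ the hypergeometric parameters become $a=n/2$, $b=1$, $c=n/2+1$ with $c-a-b=0$. Using the elementary identities $F(\alpha,1,\alpha+1;q)=\alpha\sum_{k\geq 0}q^k/(\alpha+k)$ and $q^\alpha F(\alpha,1,\alpha+1;q)=\alpha\int_0^q t^{\alpha-1}(1-t)^{-1}\,dt$, and canceling against the $\Gamma(\alpha)/\Gamma(\alpha+1)=1/\alpha$ prefactor, one gets
\[
\phi(\lambda,z,s_0)=\int_0^q\frac{t^{n/2-1}}{1-t}\,dt,\qquad q=\frac{Q(\lambda_1)}{Q(\lambda_{1z^\perp})}.
\]
Splitting the integrand as $(1-t)^{-1}+(t^{n/2-1}-1)(1-t)^{-1}$, the second summand integrates to a function real analytic at $q=1$, while the first contributes $-\log(1-q)$. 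Since $1-q=-Q(\lambda_{1z})/Q(\lambda_{1z^\perp})$ and a direct orthogonal projection calculation using the isotropy $(w(z),w(z))=0$ yields $-Q(\lambda_{1z})=|(\lambda_1,w(z))|^2/(2|Y|^2)$, one obtains $\phi(\lambda,z,s_0)=-2\log|(\lambda_1,w(z))|+(\text{real analytic})$. The function $(\lambda_1,w(z))$ is a local holomorphic equation for $\D_\lambda\subset\D$. Summing over the finitely many $\lambda$ with $\lambda_1\perp z_0$ identifies $\Phi_{m,\mu}(z,h)$ near $(z_0,h_0)$ with $-2$ times the log of a local equation for $Z(m,\mu)$ plus a real analytic function. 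Weighting by $c(m,\mu)$ and summing over $(m,\mu)$ yields the asserted logarithmic singularity of $\Phi(z,h,f)$ along $-2Z(f)$. The main obstacle is the bookkeeping of constants: one must carefully track that the $\Gamma(s/2+n/4)/\Gamma(s+1)$ prefactor cancels exactly against the $\alpha$ from the Laplace transform so that the coefficient is precisely $-2$. A secondary point is that when $X_K$ is noncompact (which occurs only when $d=1$), Theorem~\ref{thm:smoothcont} must be replaced by the Poincar\'e series arguments of \cite{Br2}, but the local singularity analysis is identical.
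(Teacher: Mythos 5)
Your proposal is correct and follows essentially the same route as the paper: reduce to the individual functions $\Phi_{m,\mu}$, use Theorem \ref{thm:lift3} together with (the proof of) Theorem \ref{thm:smoothcont} and the constancy of the residue at $s=s_0$ to isolate the finitely many local terms $\phi(\lambda,z,s_0)$, and then extract the logarithmic singularity from the integral representation $\frac{2}{n}w^{n/2}F(n/2,1,n/2+1;w)=\int_0^w t^{n/2-1}(1-t)^{-1}\,dt$, which is exactly the content of Lemma \ref{lem:reglift}. The only addition is your explicit identification $-Q(\lambda_{1z})=|(\lambda_1,w(z))|^2/(2|Y|^2)$ exhibiting $(\lambda_1,w(z))$ as a local holomorphic equation, a step the paper leaves implicit when passing from $\log|Q(\lambda_{1z})|$ to the statement about $-2Z(f)$.
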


\begin{proof}
It suffices to show that  for $\mu\in L'/L$ and totally positive  $m\in  \partial_F^{-1}+Q(\mu)$, the function $\Phi_{m,\mu}(z,h)$ is real analytic on $X_K\setminus Z(m,\mu)$ and has a logarithmic singularity along the divisor $-2Z(m,\mu)$. To this end we show that for any point $(z_0,h_0)\in \D\times H(\hat \Q)$ the function
\begin{align}
\label{eq:reglift}
\Phi_{m,\mu}(z,h)+
\sum_{\substack{\lambda\in h_0(\mu+L)\\Q(\lambda)=m\\ \lambda_1\perp z_0}}
\log |Q(\lambda_{1z})|
\end{align}
is real analytic in a neighborhood of $(z_0,h_0)$.

Since the residue of $\Phi_{m,\mu}(z,h,s)$ at $s=s_0$ does not depend
on $(z,h)$, the proof of Theorem \ref{thm:smoothcont} also shows that
the function
\begin{align*}
\Phi_{m,\mu}(z,h)-
\sum_{\substack{\lambda\in h_0(\mu+L)\\Q(\lambda)=m\\ \lambda_1\perp z_0}}
\phi(\lambda,z,s_0)
\end{align*}
is real analytic in a neighborhood of $(z_0,h_0)$.
Hence it suffices to show that
\[
\phi(\lambda,z,s_0)+\log |Q(\lambda_{1z})|
\]
extends to a real analytic function on $\D$.
This follows from Lemma \ref{lem:reglift} below.
%
\end{proof}

\begin{lemma}
\label{lem:reglift}
If $n>0$, the function
\[
\frac{2}{n}w^{n/2}F(n/2,1,n/2+1,w) + \log(1-w)
\]
extends to a real analytic function near w=1.
\end{lemma}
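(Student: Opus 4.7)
The plan is to reduce the statement to a very short computation by differentiating. Write $g(w) = w^{n/2} F(n/2,1,n/2+1,w)$. Since $(1)_k = k!$ and $(n/2)_k/(n/2+1)_k = (n/2)/(n/2+k)$, the series for the hypergeometric function telescopes and I obtain
\[
g(w) = \sum_{k=0}^\infty \frac{n/2}{n/2+k}\, w^{n/2+k}.
\]
Differentiating term by term, the awkward denominator cancels and I am left with a geometric series:
\[
g'(w) = \sum_{k=0}^\infty \frac{n}{2}\, w^{n/2+k-1} = \frac{(n/2)\,w^{n/2-1}}{1-w}.
\]

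Consequently the derivative of the function in the lemma equals
\[
\frac{d}{dw}\!\left[\frac{2}{n}\,g(w) + \log(1-w)\right] = \frac{w^{n/2-1}}{1-w} - \frac{1}{1-w} = -\,\frac{1-w^{n/2-1}}{1-w}.
\]
Because $n>0$, the function $w\mapsto w^{n/2-1}$ is real analytic on a neighborhood of $w=1$ in $(0,\infty)$, and the numerator $1-w^{n/2-1}$ vanishes to first order at $w=1$. Hence the quotient extends to a real analytic function near $w=1$ (with value $-(n/2-1)$ there). Integrating this antiderivative from any base point in a one-sided neighborhood gives a real analytic function $h(w)$ on a full neighborhood of $w=1$, and $\frac{2}{n}g(w)+\log(1-w) - h(w)$ is locally constant on the punctured neighborhood $w<1$. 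Therefore $\frac{2}{n}g(w)+\log(1-w)$ agrees with a real analytic function on that side and thus extends real analytically across $w=1$.

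There is no real obstacle; the only point that needs care is the case of odd $n$, where $w^{n/2-1}$ is not a polynomial. There one expands $w^{n/2-1}=(1-(1-w))^{n/2-1}$ by the binomial series around $w=1$, which makes the real analyticity of $(1-w^{n/2-1})/(1-w)$ at $w=1$ explicit. (As sanity checks: for $n=2$ the derivative is identically zero, consistent with the well known identity $wF(1,1,2,w) = -\log(1-w)$; and for $n=1$ one recovers $\frac{2}{n}g(w)+\log(1-w) = 2\log(1+\sqrt{w})$.)
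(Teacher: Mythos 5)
Your proof is correct and is essentially the paper's argument in differentiated form: the paper writes $\tfrac{2}{n}w^{n/2}F(n/2,1,n/2+1,w)+\log(1-w)=\int_0^w\frac{t^{n/2-1}-1}{1-t}\,dt$ using the integral representation (15.3.1) of \cite{AS}, which is exactly the antiderivative you reconstruct by differentiating the telescoping series. The only cosmetic difference is at the endpoint: you treat the removable singularity of $(1-w^{n/2-1})/(1-w)$ uniformly via the analyticity of $w^{n/2-1}$ at $w=1$, whereas the paper evaluates the integral explicitly in the three cases $n=1$, $n=2$, $n\geq 3$.
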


\begin{proof}
We use the integral representation
\begin{align*}
\frac{2}{n}w^{n/2}F(n/2,1,n/2+1,w)=\int_0^1(tw)^{n/2}(1-tw)^{-1}\frac{dt}{t}= \int_0^w t^{n/2}(1-t)^{-1}\frac{dt}{t},
\end{align*}
see for instance \cite{AS} (15.3.1). Comparing this with $\log(1-w)=-\int_0^w \frac{dt}{1-t}$, we see that
\begin{align*}
\frac{2}{n}w^{n/2}F(n/2,1,n/2+1,w)+ \log(1-w)= \int_0^w \frac{t^{n/2-1}-1}{1-t}\, dt.
\end{align*}
If $n=1$ this is equal to  $\int_0^w \frac{dt}{t+\sqrt{t}}$, if $n=2$ it vanishes identically, and if $n\geq 3$ it is equal to
\begin{align*}
-\sum_{k=0}^{n-3}\int_0^w \frac{t^{k/2}}{1+\sqrt{t}}\, dt.
\end{align*}
In all cases, the resulting function is real analytic near $w=1$.
\end{proof}

\begin{corollary}
\label{cor:pl}
The differential form $dd^c \Phi(f)$ extends to a smooth form on $X_K$, which is a (harmonic) Poincar\'e dual form for $Z(f)$.
The current $[\Phi(f)]$ induced by $\Phi(z,h,f)$ satisfies the $dd^c$-equation
\[
dd^c[\Phi(f)] +\delta_{Z(f)} = [dd^c \Phi(f)] .
\]
\end{corollary}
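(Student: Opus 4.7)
The plan is to reduce the corollary to the classical Poincaré--Lelong equation applied locally at each component of $Z(f)$. Away from $Z(f)$, Theorem \ref{thm:reglift} already says that $\Phi(z,h,f)$ is real analytic, hence on that open set $dd^c\Phi(f)$ is a smooth $(1,1)$-form which, viewed as a current, coincides with the distributional $dd^c$ of $\Phi(f)$. The entire content of the statement is therefore a local analysis near the support of $Z(f)$, extended $\C$-linearly to the complex-coefficient divisor.

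First I would exploit the local logarithmic structure given by Theorem \ref{thm:reglift}: near any point $x_0$ of the support of $Z(f)$ there is a neighborhood $U$, local holomorphic equations $G_j$ for the irreducible components of $Z(f)$ through $x_0$ with multiplicities $n_j$, and a real analytic function $\psi$ on $U$ with
\[
\Phi(z,h,f) = -2\sum_j n_j\log|G_j| + \psi \quad\text{on } U\setminus Z(f).
\]
Since each $\log|G_j|$ is pluriharmonic off its zero locus, pointwise $dd^c\Phi(f)=dd^c\psi$ there, and $dd^c\psi$ extends smoothly across $Z(f)\cap U$. Patching such $U$ covers $X_K$ and produces the claimed smooth extension of $dd^c\Phi(f)$.

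Next, for the current equation I would test $dd^c[\Phi(f)]$ against a compactly supported smooth $(n-1,n-1)$-form $\alpha$. Since $\Phi(f)$ is locally integrable and smooth away from $Z(f)$, Stokes' theorem on $X_K\setminus U_\eps$, where $U_\eps$ is a shrinking tubular neighborhood of $Z(f)$, converts $(dd^c[\Phi(f)])(\alpha)$ into $\int_{X_K\setminus U_\eps}[dd^c\Phi(f)]\wedge\alpha$ plus a boundary integral around $Z(f)$. In the limit $\eps\to 0$ the standard Poincaré--Lelong calculation $dd^c[\log|G_j|^2]=\delta_{\mathrm{div}(G_j)}$, combined with the coefficient $-2$ in the logarithmic structure (note $\log|G_j|^2 = 2\log|G_j|$), reproduces $-\delta_{Z(f)}(\alpha)$. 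This yields the desired identity $dd^c[\Phi(f)] + \delta_{Z(f)} = [dd^c\Phi(f)]$. The only delicate point is justifying the $\eps\to 0$ limit of the boundary term along a possibly complex-coefficient divisor, but this is the standard argument applied component by component and then combined $\C$-linearly.

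Finally, the Poincaré-dual property is a formal consequence of the $dd^c$-equation: for a closed smooth $(n-1,n-1)$-test form $\eta$ (compactly supported if $X_K$ is non-compact), $(dd^c[\Phi(f)])(\eta)=0$ because $dd^c\eta=0$, so $\int_{X_K} dd^c\Phi(f)\wedge\eta = \delta_{Z(f)}(\eta)=\int_{Z(f)}\eta\wedge\Omega^{n-1}/\Omega^{n-1}$, which identifies $dd^c\Phi(f)$ with a de Rham Poincaré dual of $Z(f)$. The parenthetical ``harmonic'' is a forward reference to Theorem \ref{thm:bokm}, where $dd^c\Phi(f)$ is expressed as $\Lambda(z,h,\xi_k(f))-B(f)\Omega$; both the Kudla--Millson lift of a cusp form and the invariant Kähler form are harmonic, so harmonicity of $dd^c\Phi(f)$ follows once that theorem is available.
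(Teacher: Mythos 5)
Your proposal is correct and is essentially the paper's argument: the paper's proof consists of one sentence invoking Theorem \ref{thm:reglift} together with the ``usual Poincar\'e--Lelong argument'' (citing \cite{SABK}, Chapter II.1.4, Theorem 2), which is exactly the local logarithmic decomposition and tubular-neighborhood Stokes computation you carried out in detail. Your treatment of the harmonicity statement as resting on the later identity $dd^c\Phi(f)=\Lambda(z,h,\xi_k(f))-B(f)\Omega$ is also consistent with the paper.
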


\begin{proof}
The corollary follows from Theorem \ref{thm:reglift} by means of the usual Poincar\'e-Lelong argument, see e.g.~\cite{SABK}, Chapter II.1.4, Theorem 2.
\end{proof}

\begin{remark}
\label{thm:regdiffeq}
The current $[\Phi(f)]$ induced by $\Phi(z,h,f)$ satisfies the differential equation
\[
\Delta_\D[\Phi(f)] + \frac{n}{4}\delta_{Z(f)}= \frac{n}{8}[A(f)] .
\]
\end{remark}

\begin{proof}
This is a direct consequence of Theorem \ref{thm:diffeq} and Corollary \ref{cor:res}.
\end{proof}


\section{The theta lift and meromorphic modular forms}
\label{sect:6}

We continue to use the notation of the previous section.
Here we investigate the relationship of the regularized theta lift and the Kudla--Millson lift (see \cite{KM1}, \cite{KM2}, \cite{KM3}).
We use the approach of \cite{BF}. As an application we construct explicit meromorphic modular forms on $X_K$ whose divisors are supported on Heegner divisors.
They are analogous to the automorphic products constructed by Borcherds
\cite{Bo2}. However, notice that there are no Fourier expansions and therefore no product expansions when $X_K$ is compact. Consequently, Borcherds' argument to prove important properties of the lift (such as e.g.~meromorphicity) cannot be employed when $d>1$.

\subsection{The relationship with a regularized Kudla--Millson lift}
Recall that $\kappa$ is the dual weight for $k$ given by
\[
\kappa=(2-k_1,k_2,\dots,k_d)=\left( \frac{n+2}{2},\dots,\frac{n+2}{2}\right).
\]

\begin{proposition}
\label{prop:ddcex}
Let $\mu\in L'/L$ and $m\in \partial_F^{-1}+Q(\mu)$ be totally positive. For $\Re(s)>s_0+2$ we have the identity
\[
dd^c\Phi_{m,\mu}(z,h,s)=\frac{1}{\sqrt{D}}
\int_{\tilde \Gamma_\infty\bs \H^d}^{reg}
\left\langle \overline{\delta_k(f_{m,\mu}(\tau,s))}, \Theta_{KM}(\tau, z,h)\right\rangle v^\kappa \, d\mu(\tau).
\]
Here $\delta_k$ is the differential operator defined in \eqref{defdelta}.
\end{proposition}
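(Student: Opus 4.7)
The plan is to apply $dd^c_z$ under the regularized integral sign, use Proposition \ref{prop:BF} to trade $dd^c_z\Theta_S$ for $L^{(1)}\Theta_{KM}$ (up to sign), then integrate by parts in $\tau_1$ to move the lowering operator onto $f_{-m,\mu}$, and finally identify the result with $\overline{\delta_k(f_{-m,\mu}(\tau,s))}$ using the definition \eqref{def:delta}.

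First I would observe that $\Theta_S$ and $\Theta_{KM}$ are built as tensor products over the archimedean places of $F$: for $i\ge 2$ the space $V_{\sigma_i}$ is positive definite and $\varphi_{KM}^{V_{\sigma_i}}$ coincides with the Gaussian $\varphi_0^{V_{\sigma_i}}$, while the two theta kernels differ only through their first archimedean component. Since $dd^c_z$ acts only on the hermitean factor, Proposition \ref{prop:BF} gives the termwise identity
\[
dd^c_z \Theta_S(\tau,z,h) = -L^{(1)} \Theta_{KM}(\tau,z,h),
\]
where $L^{(1)} = -2iv_1^2\,\partial/\partial\bar\tau_1$. For $\Re(s) > s_0+2$, Theorem \ref{thm:lift1} and the uniform convergence of all $z$-derivatives in the theta integral allow us to interchange $dd^c_z$ with the regularized integral, yielding
\[
dd^c\Phi_{m,\mu}(z,h,s) = -\frac{1}{\sqrt D}\int_{\tilde\Gamma_N\backslash\H^d}^{reg}\langle f_{-m,\mu}(\tau,s),\,L^{(1)}\Theta_{KM}(\tau,z,h)\rangle (v_2\cdots v_d)^{\ell/2}\,d\mu(\tau).
\]

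Next I would integrate by parts in $\tau_1$. Writing $L^{(1)}/v_1^2 = -2i\,\partial/\partial\bar\tau_1$ and using the $\tau_1$-periodicity (mod $\calO_F$) of both $f$ and $\Theta_{KM}$ — which is visible because the regularization performs the $u$-integration, over the compact flat torus $\calO_F\backslash\R^d$, first — gives
\[
\int\langle f, L^{(1)}\Theta_{KM}\rangle\frac{du_1\,dv_1}{v_1^2} = -\int\langle L_k^{(1)}f,\,\Theta_{KM}\rangle\frac{du_1\,dv_1}{v_1^2} + \text{boundary at } v_1\in\{0,\infty\}.
\]
The boundary at $v_1=\infty$ vanishes because after $u$-integration only $\lambda$ with $Q(\lambda_1)=m_1$ survive, giving the exponential factor $e^{-2\pi(Q(\lambda_{1z^\perp})-Q(\lambda_{1z}))v_1}\le e^{-2\pi m_1 v_1}$ (Lemma \ref{lem:fin}), which beats the $e^{2\pi m_1 v_1}$ growth of $\calM_s$ in \eqref{Masy2}. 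The boundary at $v_1=0$ vanishes by Proposition \ref{prop:thetakmgrowth} together with $f_{-m,\mu}(\tau,s)=O(v_1^{(s+1-k_1)/2})$ from \eqref{Masy1}, which provides ample decay once $\Re(s)>s_0+2$.

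To finish, I would convert $L_k^{(1)}f$ into $\overline{\delta_k f}$. By definition \eqref{def:delta}, $L_k^{(1)}f = v_1^{2-k_1}\overline{\delta_k(f)}$, so
\[
dd^c\Phi_{m,\mu}(z,h,s)=\frac{1}{\sqrt D}\int^{reg}\langle\overline{\delta_k(f_{-m,\mu}(\tau,s))},\,\Theta_{KM}(\tau,z,h)\rangle\,v_1^{2-k_1}(v_2\cdots v_d)^{\ell/2}\,d\mu(\tau),
\]
and since $2-k_1 = \ell/2 = \kappa_1$ while $\kappa_j = k_j = \ell/2$ for $j\ge 2$, the weight factors combine into the required $v^\kappa$. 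The principal obstacle is the rigorous justification of the $\tau_1$-integration by parts inside the regularized integral; although $u_1$-periodicity handles the real part cleanly, simultaneously controlling both the $v_1\to 0$ and $v_1\to\infty$ boundaries requires combining the explicit Fourier expansion of $f_{-m,\mu}(\tau,s)$, the growth/decay bounds for $\Theta_{KM}$, and the standing hypothesis $\Re(s)>s_0+2$.
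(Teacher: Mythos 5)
Your route is the same as the paper's: Proposition \ref{prop:BF}, interchange of $dd^c$ with the regularized integral, then integration by parts in $\tau_1$ to move the lowering operator onto $f_{-m,\mu}$ and recognize $\overline{\delta_k(f_{-m,\mu})}\,v^\kappa$. The paper carries out exactly this step in differential-form language: it writes $-(L^{(1)}_\kappa\Theta_{KM})(v_2\cdots v_d)^{\ell/2}\,d\mu(\tau)=\bar\partial\big(\Theta_{KM}\,\eta\big)$ with $\eta=(v_2\cdots v_d)^{\ell/2}\,d\tau_1\,d\mu(\tau_2)\cdots d\mu(\tau_d)$, applies the product rule, and evaluates the exact term by Stokes' theorem over rectangles $R_T$; only the faces $v_1=1/T$ and $v_1=T$ contribute, which is your pair of boundary terms. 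Your handling of the $v_1\to 0$ boundary (Whittaker asymptotics \eqref{Masy1} against Proposition \ref{prop:thetakmgrowth}, producing the threshold $\Re(s)>s_0+2$) matches the paper.

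However, your justification that the $v_1\to\infty$ boundary vanishes is flawed as stated. Lemma \ref{lem:fin} only gives $Q(\lambda_{1z^\perp})\geq m_1$, i.e.\ $e^{-2\pi(Q(\lambda_{1z^\perp})-Q(\lambda_{1z}))v_1}\leq e^{-2\pi m_1 v_1}$, while by \eqref{Masy2} the factor $|4\pi m_1v_1|^{-k_1/2}M_{-k_1/2,\,s/2}(4\pi m_1v_1)$ in $\calM_s(-4\pi m v)$ grows like a constant times $e^{2\pi m_1 v_1}$. So the two exponentials cancel exactly rather than one ``beating'' the other, and for $\lambda$ with $\lambda_1\perp z$ (where $Q(\lambda_{1z})=0$) the boundary term does not tend to zero; together with the polynomial $P_{KM}(z,\sqrt{v_1}\lambda_1)$ it may even grow. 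The correct argument, and the one the paper uses, is to restrict to $(z,h)\in X_K\setminus Z(m,\mu)$: there every contributing $\lambda$ has $Q(\lambda_{1z})<0$ strictly, so each term decays like $e^{4\pi Q(\lambda_{1z})v_1}$, and by the local finiteness in Lemma \ref{lem:fin} (ii) the sum is controlled, whence the $v_1=T$ contribution vanishes in the limit. Since both sides of the asserted identity only live away from $Z(m,\mu)$, this restriction costs nothing, but it is genuinely needed and your inequality alone does not deliver the conclusion.
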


\begin{proof}
According to Proposition \ref{prop:BF} we have
\[
dd^c\Theta_{S}(\tau, z,h) = -L_\kappa^{(1)}\Theta_{KM}(\tau, z,h).
\]
Moreover, writing $\eta= (v_2\cdots v_d)^{\ell/2}d\tau_1 d\mu(\tau_2)\cdots d\mu(\tau_d)$,
we have the identity of differential forms on $\H^d$:
\[
 -\left(L_\kappa^{(1)}
\Theta_{KM}(\tau, z,h) \right) (v_2\cdots v_d)^{\ell/2}\, d\mu(\tau)
=\bar \partial (\Theta_{KM}(\tau, z,h)\eta ).
\]
In view of \eqref{Masy1}, when $\Re(s)$ is sufficiently large, we may interchange the regularized theta integral in the definition of $\Phi_{m,\mu}(z,h,s)$ with the operator $dd^c$. By means of the above identities we obtain
\begin{align*}
dd^c\Phi_{m,\mu}(z,h,s)&=
\frac{1}{\sqrt{D}}\int_{\tilde \Gamma_\infty\bs \H^d}^{reg}
\langle f_{m,\mu}(\tau,s), dd^c \Theta_S(\tau, z,h)\rangle
(v_2\cdots v_d)^{\ell/2}\,d\mu(\tau) \\
&=
\frac{1}{\sqrt{D}}\int_{\tilde \Gamma_\infty\bs \H^d}^{reg}
\langle f_{m,\mu}(\tau,s), -L_\kappa^{(1)} \Theta_{KM}(\tau, z,h)\rangle
(v_2\cdots v_d)^{\ell/2}\,d\mu(\tau)\\
&=\frac{1}{\sqrt{D}}\int_{\tilde \Gamma_\infty\bs \H^d}^{reg}
\langle f_{m,\mu}(\tau,s), \bar \partial \Theta_{KM}(\tau, z,h)\eta \rangle .
\end{align*}
Using the product rule, we find
\begin{align}
\label{eq:pr}
dd^c\Phi_{m,\mu}(z,h,s)&= \frac{1}{\sqrt{D}}\int_{\tilde \Gamma_\infty\bs \H^d}^{reg}
d\langle f_{m,\mu}(\tau,s), \Theta_{KM}(\tau, z,h)\eta \rangle\\
\nonumber
&\phantom{=}{}  - \frac{1}{\sqrt{D}}\int_{\tilde \Gamma_\infty\bs \H^d}^{reg}
\langle  \bar \partial(f_{m,\mu}(\tau,s)), \Theta_{KM}(\tau, z,h)\eta \rangle.
\end{align}
For the second summand on the right hand side  we notice that
\begin{align}
 \label{eq:pr2}
 \bar \partial(f_{m,\mu}(\tau,s))\eta &= -(L_k^{(1)} f_{m,\mu}(\tau,s)) (v_2\cdots v_d)^{\ell/2}\, d\mu(\tau)\\
\nonumber
& = -\overline{\delta_k(f_{m,\mu}(\tau,s))}(v_1\cdots v_d)^{\ell/2}\, d\mu(\tau).
\end{align}
Hence this term gives the right hand side of the formula stated in the proposition.

Consequently, it suffices to prove that the first summand on the right hand side of \eqref{eq:pr} vanishes for $\Re(s)>s_0+2$.
For $T>0$ we let $R_T\subset \R_{>0}^d$ be the rectangle
\[
R_T= [1/T,T]\times\dots \times [1/T,T].
\]
Using the invariance of the integrand under translations, we find by  Stokes' theorem that
\begin{align*}
\label{eq:term0}
\int_{\tilde \Gamma_\infty\bs \H^d}^{reg}
d\langle f_{m,\mu}(\tau,s), \Theta_{KM}(\tau, z,h)\eta \rangle
=\lim_{T\to \infty} \int_{\partial R_T}\int_{\calO_F\bs \R^d}
\langle f_{m,\mu}(\tau,s), \Theta_{KM}(\tau, z,h)\eta \rangle.
\end{align*}
Inserting \eqref{eq:kmex}, \eqref{thetakm2}, and \eqref{eq:deffms}, and carrying out the integration over $u$, we see that this is equal to
\begin{align*}
\sqrt{D}&C(m,k,s)
\lim_{T\to \infty} \int_{\partial R_T}\calM_s(-4\pi m_1 v_1)\\
&{}\times\sum_{\substack{\lambda\in h(\mu+L)\\Q(\lambda)=m}}
P_{KM}(z,\sqrt{v_1}\lambda_1)e^{-4\pi Q(\lambda_{1z^\perp})v_1} (v_2\cdots v_d)^{\ell/2-2}\,dv_2\cdots dv_d.
\end{align*}
Only the parts of the boundary where $v_1=1/T$ or $v_1=T$ give a non-zero contribution. Carrying out the integration over $v_2,\dots, v_d$, we see that the $v_1=1/T$ contribution is equal to a constant times
\begin{align*}
\lim_{T\to \infty} T^{k_1/2} M_{-k_1/2,\,s/2}(4\pi m_1/T)
\sum_{\substack{\lambda\in h(\mu+L)\\Q(\lambda)=m}}
P_{KM}(z,T^{-1/2}\lambda_1)
e^{-2\pi (Q(\lambda_{1z^\perp})- Q(\lambda_{1z}))/T}.
\end{align*}
The later sum is up to a constant factor equal to the $m$-th Fourier coefficient of
\[
\Theta_{KM}(u+i(1/T,1,\dots,1),z,h).
\]
Hence it converges and satisfies the growth estimate of Proposition \ref{prop:thetakmgrowth} as $1/T\to 0$.
So the asymptotic behavior of the $M$-Whittaker function \eqref{Masy1} implies that the limit vanishes for $\Re(s)>s_0+2$.
On the other hand, the $v_1=T$ contribution is easily seen to vanish for $(z,h)\in X_K\bs Z(m,\mu)$.
This proves the proposition.
%
\end{proof}

\subsection{Eisenstein series and theta integrals}

The right hand side of the formula of Proposition \ref{prop:ddcex} converges for $\Re(s)>s_0+2$. It has a meromorphic continuation to the whole complex plane, since the left hand side has. It is actually holomorphic at $s=s_0$.
We now modify the integral representation on the right hand side in order to obtain an expression which converges near $s_0$.
This is done by subtracting the ``Eisenstein contribution'' of  $\Theta_{KM}(\tau, z,h)$. The remaining ``cuspidal contribution'' satisfies a better growth estimate as $v_i\to 0$ and therefore leads to a larger domain of convergence.

We briefly summarize some facts on the Siegel--Weil formula, see e.g. \cite{KR1}, \cite{Ku:Integrals} for more details.
Let $\chi_V$ denote the quadratic character of $\A_F^\times/F^\times$ associated
to $V$ given by
\[
\chi_V(x)=(x,(-1)^{\ell(\ell-1)/2} \det(V))_F.
\]
Here $\det(V)$ denotes the Gram determinant of $V$ and
$(\cdot,\cdot)_F$ is the Hilbert symbol of $F$. Let $P\subset G$ be the parabolic subgroup of upper triangular matrices. For $s\in \C$ and a standard section $\Phi(s)$ of the  principal series representation $I(s,\chi_V)$ induced by
$\chi_V |\cdot|^s$, we have the Eisenstein series
\[
E(g,s,\Phi)= \sum_{\gamma\in P(F)\bs G(F)} \Phi(\gamma g).
\]
It converges for $\Re(s)>1$ and has a meromorphic continuation to the whole complex plane.

Recall that if $v=\sigma_j$ is an infinite prime, then the corresponding local induced representation $I(s,\chi_{V,\sigma_j})$ is generated by the sections
\[
\Phi_\R^{l_j}(k_\alpha,\phi)= \chi_{1/2}(k_\alpha,\phi)^{2l_j}= \pm e^{i l_j \alpha}
\]
for $l_j\in \frac{1}{2}\Z$ satisfying $l_j\equiv \ell/2\pmod{\Z}$. Here $(k_\alpha,\phi)\in \widetilde{\SO}_2(\R)$ is given by \eqref{eq:kalpha} and $\chi_{1/2}$ is the character defined in \eqref{eq:chi12}.
If $l=(l_1,\dots,l_d)$ is a $d$-tuple of such half-integers we put $\Phi_\infty^{l}=\prod_j \Phi_\R^{l_j}$.
If $\Phi_f(s)$ is a standard section of the non-archimedian induced representation,
we obtain an
Eisenstein series of weight $l$ on $\H^d$ by putting
\[
E(\tau,s,l;\Phi_f) = v^{-l/2} E(\tilde g_\tau,s,\Phi_f\otimes \Phi_\infty^{l}),
\]
where $\tilde g_\tau\in \tilde G_\R$ with the property that $\tilde g_\tau (i,\dots,i) = \tau$.


The Weil representation gives rise to a $\tilde G_\A$-intertwining map
\begin{equation}
\lambda: S(V(\A_F))\longrightarrow I(s_0,\chi_V),\quad
\lambda(\varphi)(g)=(\omega(g)\varphi)(0),
\end{equation}
where $s_0=\ell/2-1=n/2$. We also write $\lambda(\varphi)$ for the unique standard section of $I(s,\chi_V)$ whose value at $s_0$ is equal to $\lambda(\varphi)$.
The map $\lambda$ factors into $\lambda=\lambda_\infty\otimes \lambda_f$, where $\lambda_\infty$  and $\lambda_f$ are the analogous intertwining maps at the finite and the infinite places, respectively.
We obtain a vector valued Eisenstein series for $\tilde\Gamma$ of weight $l$ with representation $\rho_L$ by putting
\begin{align}
E_L(\tau,s,l)=\sum_{\mu\in L'/L} E(\tau,s,l;\lambda_f( \chi_\mu))\chi_\mu.
\end{align}
Note that if the class number of $F$ is one, we have that
\begin{align*}
E_L(\tau,s,l) = \sum_{\gamma\in \tilde\Gamma_\infty\bs \tilde \Gamma} \left(v_1^{(s+1-l_1)/2}\cdots v_d^{(s+1-l_d)/2}\chi_0\right)\mid_{l,\rho_L} \gamma.
\end{align*}
In general, it is a finite sum over such Eisenstein series.
We will be interested in the special value $E_L(\tau,\kappa):=E_L(\tau,s_0,\kappa)$ at $s_0$.

For the rest of this section we
assume that $V$ is anisotropic over $F$ or that its Witt rank
is smaller than $n$.
Note that this condition is automatically fulfilled when $d>1$ or $n>2$.
Employing the Siegel--Weil formula (see e.g.~\cite{KR1} and \cite{We2}), it can be shown that
the average value of the Kudla--Millson theta function on $X_K$ is the given by an Eisenstein series of weight $\kappa$. More precisely, we have
\begin{align}
\label{eq:kms}
E_L(\tau,\kappa)= -\frac{1}{\vol(X_K)}\int_{X_K} \Theta_{KM}(\tau,z,h) \Omega^{n-1}.
\end{align}
Kudla proved this for $F=\Q$ in \cite[Corollary 4.16]{Ku:Integrals}, and the argument for general $F$ is analogous.
Moreover, as in \cite[Remark 2.8]{Ku:Integrals}, it can be proved that $E_L(\tau,\kappa)$ is holomorphic in $\tau$ and therefore defines an element of  $M_{\kappa,\rho_L}$.

At the cusp $\infty$, the Eisenstein series has a Fourier expansion of the form
\begin{align}
E_L(\tau,\kappa)= \chi_0 + \sum_{\mu\in L'/L}\sum_{m>0} B(m,\mu)e(\tr(m\tau))\chi_\mu.
\end{align}
The coefficients $B(m,\mu)$ can be computed explicitly using the argument of
\cite{KY}, \cite{Scho}, or \cite{BK}. However, we will not need that.

The differential form $-E_L(\tau,\kappa)\Omega$ can be viewed as the average of  $\Theta_{KM}(\tau,z,h) $.
We define the cuspidal part of the Kudla--Millson theta function by
\begin{align}
\tilde \Theta_{KM}(\tau,z,h)=\Theta_{KM}(\tau,z,h)+E_L(\tau,\kappa)\Omega.
\end{align}
It is rapidly decreasing at all cusps of $\tilde\Gamma$.

\begin{proposition}
\label{prop:thetatilde}
Assume the above hypothesis on  $V$.
\begin{enumerate}
\item[(i)]
The function  $\tilde \Theta_{KM}(\tau,z,h)v^{\kappa/2}$ is  bounded  on $\H^d$.
\item[(ii)]
For $v_i\to 0$ we have uniformly in $u$ that $\tilde \Theta_{KM}(\tau,z,h)=O(v^{-\kappa/2})$.
\item[(iii)]
For  $m\in \partial_{F}^{-1}$ the $m$-th Fourier coefficient of $\tilde \Theta_{KM}(\tau,z,h)$ is bounded by
$O(v_1^{-\kappa_1/2})$ as $v_i\to 0$.
\end{enumerate}
\end{proposition}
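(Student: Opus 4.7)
The plan is to deduce all three bounds from three input properties of $\tilde\Theta_{KM}(\tau,z,h)$: it transforms in $\tau$ as a non-holomorphic Hilbert modular form of parallel weight $\kappa$ for $\tilde\Gamma$ with representation $\rho_L$; it is holomorphic in $\tau_j$ for $j\ge 2$ (inherited from $\Theta_{KM}$, visible in \eqref{eq:kmex}, and from the holomorphy of $E_L(\tau,\kappa)$); and, crucially, it is rapidly decreasing at every cusp of $\tilde\Gamma$ (the assertion recorded immediately before the proposition).

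First, for (i): unitarity of $\rho_L$, combined with the parallel weight $\kappa$ transformation law and the identity $\Im(\gamma\tau)_j = v_j/|c_j\tau_j+d_j|^2$, implies that the scalar quantity $\|\tilde\Theta_{KM}(\tau,z,h)\|_{S_L}^2\cdot v^\kappa$ is $\tilde\Gamma$-invariant in $\tau$. It therefore suffices to bound this quantity on a fundamental domain for $\tilde\Gamma$ acting on $\H^d$, which I would decompose into a finite union of Siegel sets, one per cusp class of $\tilde\Gamma$. Continuity controls the bounded portion of each Siegel set, while the rapid decay at the corresponding cusp forces $\|\tilde\Theta_{KM}\|_{S_L}\cdot v^{\kappa/2}\to 0$ on the unbounded part. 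Part (ii) is then an immediate consequence: $|\tilde\Theta_{KM}(\tau,z,h)|\le C\,v^{-\kappa/2}$ holds on all of $\H^d$, and in particular as $v_j\to 0$.

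For (iii) I would exploit the holomorphy in $\tau_j$ for $j\ge 2$. Writing the Fourier expansion $\tilde\Theta_{KM}(\tau,z,h) = \sum_{m} A_m(v,z,h)\,e(\tr(mu))$, this holomorphy forces $\partial A_m/\partial v_j = -2\pi m_j A_m$ for $j\ge 2$, and hence $A_m(v,z,h) = B_m(v_1,z,h)\exp\!\bigl(-2\pi\sum_{j\ge 2} m_j v_j\bigr)$ for some function $B_m$ of $v_1$ alone. Specializing Fourier inversion at the distinguished point $v=(v_1,1,\dots,1)$ and applying the bound of (ii) there yields $|A_m((v_1,1,\dots,1),z,h)|\le C' v_1^{-\kappa_1/2}$ as $v_1\to 0$; solving for $B_m$ and substituting back recovers $|A_m(v,z,h)| = O(v_1^{-\kappa_1/2})$ as $v_i\to 0$.

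The main technical input, which is the real content of the proposition, is the rapid decay of $\tilde\Theta_{KM}$ at every cusp of $\tilde\Gamma$. The Siegel--Weil identity \eqref{eq:kms} cancels the constant Fourier coefficient of $\Theta_{KM}$ at the cusp $\infty$, but for decay at each of the other cusps one must invoke a corresponding local computation at each parabolic: the Eisenstein series $E_L(\tau,\kappa)$, defined as a sum over cosets in $P(F)\backslash G(F)$, contributes precisely the averaged constant term of $\Theta_{KM}$ at every cusp. With this cuspidality in hand, the three estimates follow as outlined.
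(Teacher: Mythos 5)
Your proposal is correct and follows essentially the same route as the paper: (i) and (ii) by the standard argument combining the weight-$\kappa$ transformation law (with unitarity of $\rho_L$) and the rapid decay of $\tilde\Theta_{KM}$ at all cusps, and (iii) by using (ii) for the $v_1$-behavior together with holomorphy in $\tau_2,\dots,\tau_d$ to control the remaining variables. Your explicit computation $A_m(v,z,h)=B_m(v_1,z,h)\exp\bigl(-2\pi\sum_{j\geq 2}m_jv_j\bigr)$ simply spells out the one-line holomorphy remark in the paper's proof.
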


\begin{proof}
Since  $\tilde \Theta_{KM}(\tau,z,h)$ is rapidly decreasing,
(i) and (ii) follow by the usual argument. It remains to prove (iii). The behavior of the Fourier coefficients as $v_1\to 0$ is a direct consequence of (ii). Moreover, since $\tilde \Theta_{KM}(\tau,z,h)$ is holomorphic in $\tau_2,\dots,\tau_d$ its Fourier coefficients are bounded as $v_i\to 0$ for $i=2,\dots,d$.
\end{proof}

\begin{proposition}
\label{prop:ddcex2}
We have
\[
dd^c\Phi_{m,\mu}(z,h,s)=\frac{1}{\sqrt{D}}
\int_{\tilde \Gamma_\infty\bs \H^d}^{reg}
\left\langle \overline{\delta_k(f_{m,\mu}(\tau,s))}, \tilde \Theta_{KM}(\tau,z,h)\right\rangle v^{\kappa}\, d\mu(\tau)-\frac{B(m,\mu)\Omega}{\Gamma(\frac{s}{2}-\frac{s_0}{2}+1)}.
\]
Here the regularized integral converges locally uniformly for $\Re(s)>1$.
\end{proposition}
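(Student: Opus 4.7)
The plan is to deduce Proposition \ref{prop:ddcex2} from Proposition \ref{prop:ddcex} by splitting the Kudla--Millson theta kernel into its Eisenstein and cuspidal parts. Substituting $\Theta_{KM}(\tau,z,h) = \tilde\Theta_{KM}(\tau,z,h) - E_L(\tau,\kappa)\Omega$ into the formula of Proposition \ref{prop:ddcex}, valid for $\Re(s)>s_0+2$, the right hand side becomes the sum of the desired cuspidal integral and a correction $-J(s)\Omega$, where
\[
J(s)\ :=\ \frac{1}{\sqrt D}\int_{\tilde\Gamma_N\bs\H^d}^{reg}\bigl\langle\overline{\delta_k(f_{-m,\mu}(\tau,s))},\,E_L(\tau,\kappa)\bigr\rangle v^{\kappa}\,d\mu(\tau).
\]
It then suffices to show that $J(s) = B(m,\mu)/\Gamma(\tfrac{s}{2}-\tfrac{s_0}{2}+1)$ and that the cuspidal integral converges locally uniformly for $\Re(s)>1$.

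To compute $J(s)$, I would use that $f_{-m,\mu}(\tau,s)$ consists of a single $u$-Fourier mode $e(\tr(-mu))$ supported in component $\chi_\mu$, so that after the inner $u$-integration over $\calO_F\bs\R^d$ only the Fourier coefficient $B(m,\mu)\,e(\tr(m\tau))$ of $E_L(\tau,\kappa)$ in component $\chi_\mu$ survives. The remaining $v$-integral factorises into a product of one-dimensional integrals: in $v_1$, a Laplace transform of the $M$-Whittaker function of the form already evaluated in closed form in the proof of Theorem \ref{thm:lift2} (cf.~\cite{B2} p.~215 (11)), and for $j\geq 2$ the classical Gamma-type integrals $\int_0^\infty e^{-4\pi m_jv_j}v_j^{\kappa_j-1}\,dv_j/v_j$. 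Combined with the normalising constant $C(-m,k,s)$ from \eqref{eq:deffms}, chosen precisely for such cancellations (compare the analogous computation in Proposition \ref{proppair}), and the $v_1^{k_1-2}$ weight from $\delta_k$, these integrals will assemble into $B(m,\mu)/\Gamma(\tfrac{s}{2}-\tfrac{s_0}{2}+1)$, yielding the claimed Eisenstein term.

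For the convergence assertion, after performing the $u$-integration the cuspidal integrand reduces, in the $v$-variables, to the $m$-th Fourier coefficient of $\tilde\Theta_{KM}(\tau,z,h)$ paired against an explicit Whittaker factor coming from $\delta_k(f_{-m,\mu}(\tau,s))$. By Proposition \ref{prop:thetatilde}(iii), this Fourier coefficient is $O(v_1^{-\kappa_1/2})$ as $v_1\to 0$, while $\delta_k(f_{-m,\mu}(\cdot,s))\sim v_1^{(s+k_1-1)/2}$ near $v_1=0$ by \eqref{Masy1}; combined with the measure factor $v_1^{\kappa_1-2}=v_1^{-k_1}$, the integrand near $v_1=0$ behaves like $v_1^{(s-3)/2}\,dv_1$, integrable precisely for $\Re(s)>1$. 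At $v_1\to\infty$ the exponential growth $e^{2\pi m_1 v_1}$ of the Whittaker factor is cancelled by the decay of the $m$-th Fourier coefficient of the cuspidal form $\tilde\Theta_{KM}$ on $X_K\setminus Z(m,\mu)$, and the $v_j$ directions for $j\geq 2$ are controlled by the holomorphy of $\tilde\Theta_{KM}$ in $\tau_j$ together with the exponential factor $e^{-2\pi m_j v_j}$ from $f_{-m,\mu}$.

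The main obstacle I anticipate is justifying the split of the regularized integral under the decomposition $\Theta_{KM}=\tilde\Theta_{KM}-E_L\Omega$ and transferring the resulting identity from the original region $\Re(s)>s_0+2$ of Proposition \ref{prop:ddcex} to the advertised region $\Re(s)>1$. In the former region both pieces already converge absolutely, so the split is automatic and Proposition \ref{prop:ddcex2} holds there by direct substitution; the identity is then propagated to $\Re(s)>1$ by analytic continuation, using the closed-form expression for $J(s)$, which is meromorphic on all of $\C$, together with the holomorphy of the cuspidal integral in this larger half-plane established above.
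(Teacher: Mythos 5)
Your proposal is correct and follows essentially the same route as the paper: split $\Theta_{KM}=\tilde\Theta_{KM}-E_L(\tau,\kappa)\Omega$ in the formula of Proposition \ref{prop:ddcex}, evaluate the Eisenstein contribution by unfolding the $u$-integral and computing the resulting Laplace transform via \cite{B2} p.~215 (11) to get $B(m,\mu)\Omega/\Gamma(\frac{s}{2}-\frac{s_0}{2}+1)$, and establish convergence of the cuspidal integral for $\Re(s)>1$ from Proposition \ref{prop:thetatilde}(iii) together with \eqref{Masy1}, \eqref{Masy2} (the exponent count $v_1^{(\Re(s)-3)/2}$ near $v_1=0$ matches the paper's estimate). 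Your explicit remark about propagating the identity from $\Re(s)>s_0+2$ to $\Re(s)>1$ by analytic continuation is a point the paper leaves implicit, but it is the same underlying argument.
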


\begin{proof}
According to Proposition \ref{prop:ddcex} we have
\begin{align}
\label{eq:for1}
dd^c\Phi_{m,\mu}(z,h,s)&=\frac{1}{\sqrt{D}}
\int_{\tilde \Gamma_\infty\bs \H^d}^{reg}
\left\langle \overline{\delta_k(f_{m,\mu}(\tau,s))}, \tilde\Theta_{KM}(\tau, z,h)\right\rangle v^{\kappa}\, d\mu(\tau)\\
\nonumber
&\phantom{=}{}
-\frac{1}{\sqrt{D}}
\int_{\tilde \Gamma_\infty\bs \H^d}^{reg}
\left\langle \overline{\delta_k(f_{m,\mu}(\tau,s))}, E_L(\tau,\kappa)\Omega\right\rangle v^{\kappa}\, d\mu(\tau).
\end{align}
We have to compute the latter integral. A  direct computation shows that
\begin{align}
\label{eq:for2}
\overline{\delta_k(f_{m,\mu}(\tau,s))}&=C(m,k,s)(s+s_0)(4\pi m_1)^{-k_1/2} \\
\nonumber
&\phantom{=}{}\times v_1^{k_1/2-1}M_{1-k_1/2,s/2}(4\pi m_1 v_1)e^{-2\pi m_2 v_2}\cdots e^{-2\pi m_d v_d}e(-\tr(mu))\chi_\mu.
\end{align}
Inserting this and carrying out the integrations over $u$ and $v_2,\dots ,v_d$, we see that the second integral on the right hand side of \eqref{eq:for1}
is equal to
\begin{align*}
B(m,\mu)\Omega \cdot\frac{s+s_0}{\Gamma(s+1)} \int_0^\infty
(4\pi m_1 v_1)^{-k_1/2}M_{1-k_1/2,s/2}(4\pi m_1 v_1)e^{-2\pi m_1 v_1} \frac{dv_1}{v_1}.
\end{align*}
This is a Laplace transform, which can be computed  by means of \cite{B2} p.~215 (11). We obtain for the second integral on the right hand side of \eqref{eq:for1}
\begin{align*}
\frac{B(m,\mu)\Omega}{\Gamma(\frac{s}{2}-\frac{s_0}{2}+1)}.
\end{align*}
This proves the formula of the proposition.

We now prove the convergence statement for the integral.
According to \eqref{eq:for2} and \eqref{Masy1} we have
\[
\overline{\delta_k(f_{m,\mu}(\tau,s))}=O(v_1^{\Re(s)/2+(k_1-1)/2}),\quad v_i\to 0.
\]
By means of Proposition \ref{prop:thetatilde} (iii), we see that
\[
\int_{\calO_F\bs \R^d}
\left\langle \overline{\delta_k(f_{m,\mu}(\tau,s))}, \tilde \Theta_{KM}(\tau,z,h)\right\rangle v^{\kappa}\,du
=O\left(v_1^{\Re(s)/2+1/2}(v_2\cdots v_d)^{n/2+1}\right),
\]
as $v_i\to 0$. On the other hand, in view of \eqref{Masy2}, this quantity is bounded as $v_i\to \infty$.
Consequently,
\[
\int_{v\in \R_{>0}^d}\left(\int_{\calO_F\bs \R^d}
\left\langle \overline{\delta_k(f_{m,\mu}(\tau,s))}, \tilde \Theta_{KM}(\tau,z,h)\right\rangle v^\kappa\,du\right)  \frac{dv}{\norm(v)^{2} }
\]
converges when $\Re(s)>1$.
\end{proof}

\subsection{Regularized Green functions and the Kudla--Millson lift of cusp forms}

For a cusp form $g\in S_{\kappa,\rho_L}$ we define the Kudla--Millson lift by
\begin{align}
\Lambda(z,h,g) = \big( \Theta_{KM}(\tau,z,h), g(\tau)\big)_{Pet}.
\end{align}
The theta integral converges and defines a closed harmonic $2$-form on $X_K$. The following theorem is a generalization of
\cite{BF} Theorem 6.1 to our situation.

\begin{theorem}
\label{thm:bokm}
Let $f\in  H_{k,\bar\rho_L}$. We write
\begin{align*}
f=\sum_{\mu\in L'/L} \sum_{m\gg 0} c(m,\mu)f_{m,\mu}(\tau),
\end{align*}
and define
\begin{align}
\label{eq:defbf}
B(f)=\sum_{\mu\in L'/L} \sum_{m\gg 0} c(m,\mu)B(m,\mu).
\end{align}
Then we have the identity
\begin{align*}
dd^c\Phi(z,h,f)=\Lambda(z,h,\xi_k(f))-B(f)\Omega.
\end{align*}
\end{theorem}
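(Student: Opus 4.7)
The plan is to start from Proposition \ref{prop:ddcex2}, multiply through by $c(m,\mu)$, and sum over the finitely many $(m,\mu)$ appearing in \eqref{eq:fdecomp}. This yields, for $\Re(s) > 1$,
\[
dd^c \Phi(z,h,s,f) = \frac{1}{\sqrt{D}} \int^{reg}_{\tilde\Gamma_N\backslash \H^d} \bigl\langle \overline{\delta_k(\tilde f(\tau,s))}, \tilde\Theta_{KM}(\tau,z,h)\bigr\rangle v^\kappa\, d\mu(\tau) - \frac{B(f,s)\,\Omega}{\Gamma(\tfrac{s-s_0}{2}+1)},
\]
where $\tilde f(\tau,s)=\sum c(m,\mu) f_{-m,\mu}(\tau,s)$ and $B(f,s)=\sum c(m,\mu)B(m,\mu)$. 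I would then let $s \to s_0$. The pole of $\Phi(z,h,s,f)$ at $s_0$ has residue $A(f)$, which is a constant on $X_K$ and is therefore annihilated by $dd^c$; hence the left-hand side specializes to $dd^c\Phi(z,h,f)$. On the right, the gamma factor evaluates to $\Gamma(1)=1$, so the second term becomes precisely $-B(f)\Omega$.

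The substantive task is to identify the regularized integral at $s=s_0$ with $\Lambda(z,h,\xi_k(f))$. By \eqref{eq:delta}, $\delta_k(f)$ is a finite antiholomorphic exponential sum, so its $m$-th Fourier mode decays like $e^{-2\pi\tr(mv)}$ as $v_j\to\infty$; combined with the rapid decay of $\tilde\Theta_{KM}$ (Proposition \ref{prop:thetatilde}), the regularized integral at $s=s_0$ coincides with the absolutely convergent ordinary integral. Using the symmetry of $\langle\,,\,\rangle$ and the decomposition $\tilde\Theta_{KM} = \Theta_{KM}+E_L(\tau,\kappa)\Omega$, I split this integral into two pieces.

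The first piece, $\tfrac{1}{\sqrt D}\int_{\tilde\Gamma_N\backslash \H^d}\langle \Theta_{KM},\overline{\delta_k(f)}\rangle v^\kappa\,d\mu$, is identified with $\Lambda(z,h,\xi_k(f))$ by running the Poincar\'e-series unfolding \eqref{def:xi} in reverse: since $\xi_k(f)=\sum_{\gamma\in \tilde\Gamma_N\backslash\tilde\Gamma}\delta_k(f)|_{\kappa,\rho_L}\gamma$ and both $\Theta_{KM}\,v^\kappa\,d\mu$ and the pairing are $\tilde\Gamma$-invariant, folding the sum produces the Petersson integral over $\tilde\Gamma\backslash\H^d$. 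The second piece, $\tfrac{\Omega}{\sqrt D}\int_{\tilde\Gamma_N\backslash \H^d}\langle E_L(\cdot,\kappa),\overline{\delta_k(f)}\rangle v^\kappa\,d\mu$, similarly folds to the Petersson pairing $(E_L(\cdot,\kappa),\xi_k(f))_{Pet}\cdot\Omega$; this vanishes because $E_L(\tau,\kappa)$ is a holomorphic Eisenstein series in $M_{\kappa,\rho_L}$ and $\xi_k(f)$ is a cusp form, and Eisenstein series are Petersson-orthogonal to cusp forms.

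The main obstacle is the borderline weight range: when $\kappa_j=3/2$ (i.e.\ $n=1$) the Poincar\'e series defining $\xi_k(f)$ is only given by Hecke summation, and when $n\le 2$ we have $s_0\le 1$ so the integral in Proposition \ref{prop:ddcex2} does not converge at $s_0$ without further work. I would handle both issues simultaneously by inserting an auxiliary factor $\norm(v)^{s'}$ into both the Poincar\'e-series definition of $\xi_k(f)$ and the theta integral, carrying out the unfolding for $\Re(s')$ large where all manipulations are absolutely convergent, and then taking the holomorphic continuation to $s'=0$; the orthogonality of $E_L$ to $\xi_k(f)$ persists in the continuation because it is an identity between meromorphic functions of $s'$.
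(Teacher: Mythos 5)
There is a genuine gap in the step where you split $\tilde\Theta_{KM}=\Theta_{KM}+E_L(\tau,\kappa)\Omega$ \emph{inside} the integral over $\tilde\Gamma_N\bs\H^d$ and then fold each piece separately. Folding (equivalently, unfolding the Poincar\'e series \eqref{def:xi}) requires interchanging the sum over $\tilde\Gamma_N\bs\tilde\Gamma$ with the integral over $\tilde\Gamma\bs\H^d$, and this is only justified when the other factor decays rapidly at the cusps; that is the case for $\tilde\Theta_{KM}$ (Proposition \ref{prop:thetatilde}, for $n>2$) and for cusp forms, but it fails for $\Theta_{KM}$ and for $E_L(\tau,\kappa)$ individually, which have only moderate growth. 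Indeed your evaluation of the second piece is false: by \eqref{eq:delta} and the Laplace-transform computation already carried out in the proof of Proposition \ref{prop:ddcex2}, specialized to $s=s_0$, one finds
\[
\frac{1}{\sqrt{D}}\int_{\tilde \Gamma_N\bs \H^d}\left\langle \overline{\delta_k(f)},\,E_L(\tau,\kappa)\right\rangle v^{\kappa}\, d\mu(\tau)\;=\;B(f),
\]
which is in general nonzero (Remark \ref{rem:coeff}), whereas $\big(E_L(\cdot,\kappa),\xi_k(f)\big)_{Pet}=0$; so this unfolded integral is \emph{not} the Petersson pairing. Correspondingly the first piece equals $\Lambda(z,h,\xi_k(f))-B(f)\Omega$ rather than $\Lambda(z,h,\xi_k(f))$. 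Your final identity comes out correct only because these two errors cancel; as written, neither intermediate identification is established, and the second one contradicts the paper's own computation.

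The repair is to reverse the order of operations, which is what the paper does: keep $\tilde\Theta_{KM}$ intact, unfold the full integral (legitimate for $n>2$ by Proposition \ref{prop:thetatilde}; for $n=1,2$ via the auxiliary $\norm(v)^{s'}$ Hecke-summation device you mention) to get $\frac{1}{\sqrt{D}}\int_{\tilde\Gamma\bs\H^d}\langle\overline{\xi_k(f)},\tilde\Theta_{KM}\rangle v^{\kappa}\,d\mu(\tau)$, and only \emph{then} split off $E_L(\tau,\kappa)\Omega$ and use the orthogonality of the cusp form $\xi_k(f)$ with the Eisenstein series, which is valid on the folded $\tilde\Gamma$-invariant integral. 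The remainder of your argument (summing Proposition \ref{prop:ddcex2} over the finitely many $(m,\mu)$, passing to $s=s_0$ using that the residue $A(f)$ is constant in $(z,h)$ and hence annihilated by $dd^c$, and the $s'$-continuation for small $n$) is in line with the paper's proof.
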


\begin{proof}
We first assume that $n>2$. Then it follows from Proposition~\ref{prop:ddcex2}
that the regularized Green function $\Phi_{m,\mu}(z,h)$ satisfies the identity
\begin{align}
\label{eq:bokme}
dd^c\Phi_{m,\mu}(z,h)=\frac{1}{\sqrt{D}}
\int_{\tilde \Gamma_\infty\bs \H^d}
\left\langle \overline{\delta_k(f_{m,\mu}(\tau))}, \tilde \Theta_{KM}(\tau,z,h)\right\rangle v^{\kappa}\, d\mu(\tau)-B(m,\mu)\Omega.
\end{align}
Notice that the regularized theta integral in Proposition~\ref{prop:ddcex2} converges near $s=s_0$ when $n>2$.
Moreover, because of  \eqref{eq:delta} and Proposition \ref{prop:thetatilde}, the theta integral in \eqref{eq:bokme} is well defined without any
regularization!
By the unfolding argument we see that it is equal to
\[
\int_{\tilde \Gamma\bs \H^d}
\left\langle \overline{\xi_k(f_{m,\mu}(\tau))}, \tilde \Theta_{KM}(\tau,z,h)\right\rangle v^{\kappa}\, d\mu(\tau).
\]
Since the integral of the cusp form $\xi_k(f_{m,\mu})$ against the Eisenstein series $E_L(\tau,\kappa)$ vanishes, we obtain the assertion.

If $n\geq 1$, then one can show by means of Proposition~\ref{prop:ddcex2}
that $dd^c\Phi_{m,\mu}(z,h)$ is equal to the value at $s'=0$ of the holomorphic continuation in $s'$ of
\[
\frac{1}{\sqrt{D}}
\int_{\tilde \Gamma_\infty\bs \H^d}
\left\langle \overline{\delta_k(f_{m,\mu}(\tau))}, \tilde \Theta_{KM}(\tau,z,h)\right\rangle v^{\kappa}\norm(v)^{s'/2}\, d\mu(\tau)-B(m,\mu)\Omega.
\]
Again the assertion follows by unfolding the theta integral.
\end{proof}

\begin{remark}
\label{rem:coeff}
Let $\mu\in L'/L$ and $m\in \partial_F^{-1}+Q(\mu)$ be totally positive.
We have
\[
B(m,\mu)=-\frac{\deg(Z(m,\mu))}{\vol(X_K)}=-\frac{1}{2}A(m,\mu) .
\]
\end{remark}

\begin{proof}
Integrating the identity of \eqref{eq:bokme} (respectively its analogue for $n\geq 1$)
against $\Omega^{n-1}$ we obtain by means of \eqref{eq:kms} that
\[
\int_{X_K}(dd^c\Phi_{m,\mu})\Omega^{n-1}=
-B(m,\mu)\int_{X_K}\Omega^{n}.
\]
On the other hand, Corollary \ref{cor:pl} implies that the left hand side is equal to $\delta_{Z(m,\mu)}(\Omega^{n-1})$.
Consequently, $\deg(Z(m,\mu))= -B(m,\mu)\vol(X_K)$.
(Alternatively, this can be proved as in \cite{Ku:Integrals} Theorem 4.20, using \eqref{eq:kms} and the Thom form property of $\varphi_{KM}$.)
\end{proof}

\subsection{Meromorphic modular forms and special divisors}
We now use Theorem \ref{thm:bokm} to derive an analogue of Borcherds' result on automorphic products (see \cite{Bo2}, Theorem~13.3).


\begin{lemma}
\label{lem:logprod}
Let $U\subset \C^n$ be a convex domain. Let $C$ be an analytic  divisor on $U$, and let $\psi:U\setminus C\to \R$ be a $C^2$-function
with a logarithmic singularity   along $C$. If $\psi$ is pluriharmonic (i.e.~$\partial\bar\partial \psi =0$), then there exists a meromorphic function $\Psi$ on $U$
such that
$\psi=\log|\Psi|$.
\end{lemma}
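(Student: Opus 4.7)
The plan is to recover $\Psi$ by exponentiating a single-valued primitive of the holomorphic $(1,0)$-form $\omega := 2\partial\psi$. Write $C = \sum_i c_i C_i$ with $C_i$ irreducible and $c_i\in\Z$. From the pluriharmonicity $\partial\bar\partial\psi = 0$ on $U\setminus C$ one has $\bar\partial\omega = 2\bar\partial\partial\psi = 0$, so $\omega$ is holomorphic, and $\partial\omega = 0$ trivially, so $\omega$ is closed. Because $\psi$ is real, $\overline{\partial\psi} = \bar\partial\psi$, whence $\omega+\bar\omega = 2\,d\psi$ and
\[
\Re\omega = d\psi \quad \text{on } U\setminus C.
\]

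Next I would verify that $\omega$ extends to a closed meromorphic $1$-form on $U$ with integral residues along $C$. At a point $p\in C$ only finitely many components $C_{i_1},\dots,C_{i_r}$ pass, with local defining functions $g_{i_1},\dots,g_{i_r}$. The logarithmic singularity hypothesis gives
\[
\psi = \sum_{j=1}^r c_{i_j}\log|g_{i_j}| + h
\]
with $h$ real-analytic near $p$; since $\psi$ and each $\log|g_{i_j}|$ are pluriharmonic off the relevant zero loci, so is $h$, and by real-analyticity $h$ extends pluriharmonically across them. Consequently
\[
\omega = \sum_{j=1}^r c_{i_j}\,\frac{dg_{i_j}}{g_{i_j}} + 2\partial h
\]
near $p$, exhibiting $\omega$ as a meromorphic $1$-form with simple poles along each $C_{i_j}$ of residue $c_{i_j}\in\Z$. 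Gluing these local expressions gives the meromorphic extension to all of $U$.

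Now I would integrate $\omega$ to construct $\Psi$. Fix $z_0\in U\setminus C$ and set $F(z) := \int_{z_0}^z\omega$, a priori multivalued. Convexity of $U$ yields simple-connectedness, so $H_1(U\setminus C;\Z)$ is generated by meridians around the components $C_i$; by the previous paragraph, the period of $\omega$ on such a meridian is $2\pi i c_i\in 2\pi i\Z$. Hence $F$ is well defined modulo $2\pi i\Z$ and $\Psi_0 := \exp F$ is single-valued holomorphic on $U\setminus C$. Near a smooth point of $C_i$, $F = c_i\log g_i + (\text{holomorphic})$, so $\Psi_0 = g_i^{c_i}\cdot u$ for some non-vanishing holomorphic $u$; applying Riemann extension to $\Psi_0$ where $c_i\ge 0$ and to $1/\Psi_0$ where $c_i<0$, and then filling in across the codimension-$\ge 2$ singular locus of $C$ by Thullen's extension theorem, yields a meromorphic extension of $\Psi_0$ to $U$ with divisor $C$. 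Finally, using $\Re\omega=d\psi$ gives $\Re F(z) = \psi(z)-\psi(z_0)$, hence $|\Psi_0(z)| = e^{\psi(z)-\psi(z_0)}$, and $\Psi := e^{\psi(z_0)}\Psi_0$ is the desired meromorphic function with $\psi = \log|\Psi|$.

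The main obstacle is the single-valuedness step. It rests on the integrality of the divisor coefficients $c_i$ (so that the residues $2\pi ic_i$ lie in $2\pi i\Z$) together with the simple-connectedness of $U$ (so that $H_1(U\setminus C;\Z)$ is controlled entirely by meridians around the components of $C$ and no extra periods appear from the topology of $U$ itself). The meromorphic extension of $\omega$ and of $\Psi_0$ across the codimension-$\ge 2$ singular locus of $C$ and the final identification $\log|\Psi|=\psi$ are routine once the monodromy obstruction has been handled.
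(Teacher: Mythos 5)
Your proposal is correct, but it takes a genuinely different route from the paper's proof. The paper exploits convexity through sheaf cohomology: since $H^1(U,\mathcal{O}_U)=H^2(U,\Z)=0$, the second (multiplicative) Cousin problem is universally solvable, so there is a meromorphic $G$ on $U$ with $\operatorname{div}(G)=C$; then $\psi-\log|G|$ extends to a pluriharmonic real-analytic function on all of $U$, is written as $\Re(H)$ for a holomorphic $H$ (simple connectedness), and one sets $\Psi=e^{H}G$. You bypass Cousin II entirely: you integrate the closed holomorphic form $2\partial\psi$, show it extends meromorphically with simple poles and integer residues $c_i$ along the components of $C$, and exponentiate the primitive. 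The price is that single-valuedness of $\exp F$ rests on the topological fact that, $U$ being simply connected, $H_1(U\setminus C;\Z)$ is generated by meridians of the components of $C$ (standard for hypersurface complements, but not completely trivial), and that extending $\exp F$ across $C$ needs the Riemann extension theorem near smooth points plus a Levi/Thullen-type extension across the codimension~$\geq 2$ singular locus — issues the paper's global defining function $G$ absorbs in one stroke. What your route buys is explicitness ($\Psi$ is literally $e^{\psi(z_0)}\exp\int 2\partial\psi$, and its divisor is visibly $C$) and a clean isolation of where the two hypotheses enter: integrality of the coefficients of $C$ through the residues, and simple connectedness of $U$ through the period lattice. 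One small point worth stating: since $\psi$ is only assumed $C^2$, you should note that pluriharmonicity forces the coefficients of $\partial\psi$ to be holomorphic (they are $C^1$ and killed by every $\partial/\partial\bar z_j$), which is what makes $\omega$ a holomorphic $1$-form to begin with; with that remark your argument is complete.
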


\begin{proof}

By the assumption on $U$ we have $H^1(U,\calO_U)=H^2(U,\Z)=0$ and the multiplicative Cousin problem is universally solvable.
Hence there is a meromorphic function $G$ on $U$ such that $C=\dv(G)$. The assumption on $\psi$ implies that
\[
\psi-\log|G|
\]
extends to a pluriharmonic real analytic function on $U$.
Since $U$ is simply connected,  there exists a holomorphic function
$H:U\to \C$  such that
\[
\Re(H)= \psi-\log|G|,
\]
see e.g.~\cite{GR} Chapter IX, Section C.
Rewriting this as
\[
\psi=\log|e^H\cdot G|,
\]
we see that we can take $\Psi= e^H\cdot G$.
\end{proof}

The function $\Psi$ in the lemma has divisor $C$. By the maximum modulus principle, it is uniquely determined up to a constant of modulus $1$.


\begin{lemma}
\label{lem:3.21}
(cp.~\cite{Bo2} Lemma 13.1.) Let $r\in \Q$.
 Suppose that $\Psi$ is a meromorphic function on $\calH\times H(\hat \Q)/K$ for which $|\Psi(z,h)|\cdot |y|^{r}$ is invariant under $H(\Q)$.
 Then there exists a unitary multiplier system $\chi:H(\Q)\times H(\hat \Q)\to \C^\times$ of weight $r$ such that $\Psi$ is a
 meromorphic modular form of weight $r$, level $K$,  and multiplier system $\chi$.
\end{lemma}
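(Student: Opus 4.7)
The starting observation is that the hypothesis $|\Psi(\gamma z,\gamma h)|\cdot|\Im(\gamma z)|^r=|\Psi(z,h)|\cdot|y|^r$ combined with the transformation formula $|\Im(\gamma z)|^2=|j(\gamma,z)|^{-2}|y|^2$ recorded in Section \ref{sect2} is equivalent to
\[
|\Psi(\gamma z,\gamma h)|=|j(\gamma,z)|^{r}\,|\Psi(z,h)|
\]
for every $\gamma\in H(\Q)$ and $(z,h)$ outside the zero/pole locus. The plan is to promote this modulus identity to an equality of meromorphic functions up to a unimodular constant, and to read that constant off as the value of a multiplier system.

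Each connected component of the tube domain $\calH$ is contractible, hence simply connected, and $z\mapsto j(\gamma,z)$ is a nonvanishing holomorphic function on $\calH$. I therefore fix a holomorphic branch $j(\gamma,z)^{r}$ on (each component of) $\calH$ for every $\gamma\in H(\Q)$. For fixed $(\gamma,h)$ the meromorphic function
\[
F_{\gamma,h}(z):=\frac{\Psi(\gamma z,\gamma h)}{j(\gamma,z)^{r}\,\Psi(z,h)}
\]
has constant modulus one away from its zeros and poles. Applying the maximum modulus principle to $F_{\gamma,h}$ and $1/F_{\gamma,h}$ in a small neighborhood of any regular point forces $F_{\gamma,h}$ to be a constant $\chi(\gamma,h)\in S^1\subset\C^\times$; working component-wise (equivalently, on $\D^{+}$ via \eqref{decomp2}) this defines a function $\chi\colon H(\Q)\times H(\hat\Q)/K\to\C^\times$ of modulus one.

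It remains to check the multiplier system axioms. $K$-invariance $\chi(\gamma,hk)=\chi(\gamma,h)$ is inherited from $\Psi(z,hk)=\Psi(z,h)$. Iterating the defining identity $\Psi(\gamma z,\gamma h)=\chi(\gamma,h)\,j(\gamma,z)^{r}\,\Psi(z,h)$ first along $\gamma_2$ and then along $\gamma_1$, and comparing with its single application for $\gamma_1\gamma_2$, yields
\[
\chi(\gamma_1\gamma_2,h)=\chi(\gamma_1,\gamma_2h)\,\chi(\gamma_2,h)\cdot\frac{j(\gamma_1,\gamma_2z)^{r}\,j(\gamma_2,z)^{r}}{j(\gamma_1\gamma_2,z)^{r}}.
\]
The standard cocycle identity $j(\gamma_1\gamma_2,z)=j(\gamma_1,\gamma_2z)\,j(\gamma_2,z)$ for the automorphy factor shows that the last fraction is an $N$-th root of unity, where $N$ is any positive integer with $Nr\in\Z$, and in particular is independent of $z$. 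This root of unity is precisely the non-cocycle defect that a multiplier system of rational weight is allowed to carry, in the sense of \cite{Br2}, Chapter 3.3. By construction $\Psi$ then satisfies properties (i), (ii), (iv) and the twisted transformation law (iii') of Section \ref{sect2} with this unitary multiplier system $\chi$, which is the conclusion.

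The only delicate step is the coherent global choice of the branches $j(\gamma,z)^{r}$; once these are fixed, the identification of the ratio $F_{\gamma,h}$ as a constant of modulus one via maximum modulus, and the verification of the (twisted) cocycle relation, are routine.
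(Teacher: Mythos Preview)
Your proof is correct and follows essentially the same approach as the paper: form the ratio $\Psi(\gamma z,\gamma h)/(j(\gamma,z)^{r}\Psi(z,h))$, observe it has constant modulus one, and apply the maximum modulus principle to conclude it is a constant $\chi(\gamma,h)$. You are in fact more careful than the paper about the branch choice for $j(\gamma,z)^{r}$ when $r\in\Q$ and about the resulting root-of-unity defect in the cocycle relation, which the paper absorbs into the phrase ``it is easily checked that $\chi$ satisfies the cocycle condition of a multiplier system.''
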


\begin{proof}
The hypothesis implies that for every $\gamma\in H(\Q)$ and $h\in H(\hat \Q)$, the function
\[
\frac{\Psi(\gamma z,\gamma h)}{\Psi(z,h)} j(\gamma,z)^{-r}
\]
is holomorphic on $\D$ and has constant modulus $1$. By the maximum modulus principle it has to be constant, say equal to
$\chi(\gamma,h)$. The right-invariance of $\Psi$ under $K$ implies that $\chi$ is right-invariant under $K$. Moreover, it is easily checked that $\chi$ satisfies the cocycle condition of a multiplier system.
\end{proof}

\begin{theorem}
\label{thm:bop}
Let
\[
f=\sum_{\mu\in L'/L} \sum_{m\gg 0} c(m,\mu)f_{m,\mu}(\tau)\in  M^!_{k,\bar\rho_L}
\]
be a weakly holomorphic Whittaker form with coefficients $c(m,\mu)\in \Z$.
Then there exists a function  $\Psi(z,h,f)$ on $\calH \times H(\hat \Q)$ with the following properties:
\begin{enumerate}
\item[(i)]
$\Psi$ is a meromorphic modular form for $H(\Q)$ of weight $-B(f)$ and level $K$ with a unitary multiplier system of finite order.
\item[(ii)]
The divisor of $\Psi$ is equal to $Z(f)$.
\item[(iii)]
The Petersson metric of $\Psi$ is given by
\[
-\log\|\Psi(z,h,f)\|_{Pet}^2 =
\Phi(z,h,f).
\]
\end{enumerate}
\end{theorem}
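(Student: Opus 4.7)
The plan is to construct $\Psi$ by exponentiating an appropriately defined pluriharmonic function. First, since $f\in M^!_{k,\bar\rho_L}$ we have $\xi_k(f)=0$, so Theorem~\ref{thm:bokm} reduces to the smooth identity $dd^c\Phi(z,h,f) = -B(f)\Omega$ on $X_K\setminus Z(f)$. Because $\Omega = -dd^c\log|y|^2$ by \eqref{eq:omega}, the function
\[
\psi(z,h) := -\tfrac{1}{2}\Phi(z,h,f) + B(f)\log|y|,
\]
pulled back to $\calH\times H(\hat\Q)$, is pluriharmonic away from $Z(f)$. By Theorem~\ref{thm:reglift}, $\Phi$ has a logarithmic singularity along $-2Z(f)$, so $\psi$ has a logarithmic singularity along $Z(f)$ with coefficient $+1$; the integrality hypothesis $c(m,\mu)\in\Z$ is exactly what makes this an honest integral divisor, not a formal $\C$-linear combination.

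Next, for each fixed $h\in H(\hat\Q)$ I would apply Lemma~\ref{lem:logprod} to the convex tube domain $\calH$ and the pullback divisor $Z(f)_h$, producing a meromorphic function $\Psi(\cdot,h)$ on $\calH$, unique up to a unimodular constant, with $\dv\Psi(\cdot,h)=Z(f)_h$ and $\log|\Psi(z,h)|=\psi(z,h)$. Choosing such constants arbitrarily on each right $K$-coset in $H(\hat\Q)$ defines $\Psi$ on $\calH\times H(\hat\Q)$ so that conditions (ii) and (iii) hold by construction (the latter being the rewriting $-\log\|\Psi\|_{Pet}^2 = -2\log|\Psi|+2B(f)\log|y| = \Phi$ under $\|\Psi\|_{Pet}=|\Psi|\cdot|y|^{-B(f)}$).

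For the modularity in (i), I would use the $H(\Q)$-invariance of $\Phi(z,h,f)$, inherited from the invariance \eqref{eq:thetaH} of $\Theta_S$, together with $|\Im(\gamma z)|=|j(\gamma,z)|^{-1}|\Im(z)|$ to compute
\[
\log|\Psi(\gamma z,\gamma h)| - \log|\Psi(z,h)| = -B(f)\log|j(\gamma,z)|.
\]
Thus $\Psi(\gamma z,\gamma h)\,\Psi(z,h)^{-1}\,j(\gamma,z)^{B(f)}$ is meromorphic in $z$ of constant modulus $1$, hence equals a unimodular constant $\sigma(\gamma,h)$ by the maximum principle. The cocycle relation in $\gamma$ is automatic, and Lemma~\ref{lem:3.21} then yields that $\Psi$ is a meromorphic modular form of weight $-B(f)$ with the unitary multiplier system $\sigma$, establishing the bulk of (i).

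The hard part will be the finiteness of $\sigma$, since unlike Borcherds' setting we have no product expansion to exploit. When $n\geq 3$, the real group $H(\R)$ has no simple factor of real rank one, so Margulis' superrigidity theorem (see \cite{Mar}, Chapter~IX) implies that every unitary character of an arithmetic subgroup $\Gamma_j$ is of finite order, forcing $\sigma$ to be of finite order as well. For $n=1,2$ this argument fails and I would invoke the embedding trick of \cite{YZZ}: embed $V$ isometrically into a quadratic space $\tilde V$ over $F$ of signature $((n',2),(n'+2,0),\dots,(n'+2,0))$ with $n'\geq 3$, lift $f$ to a weakly holomorphic Whittaker form $\tilde f$ for $\tilde V$, and compare $\Psi(z,h,f)$ to the restriction of $\Psi(\tilde z,\tilde h,\tilde f)$ along the associated embedding of Shimura varieties by splitting the Siegel theta kernel. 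Finiteness for $\tilde f$ then descends to finiteness for $f$. (When $d=1$, all of the above is consistent with Borcherds' Theorem~13.3 of \cite{Bo2} via the isomorphism $\eta$ of Proposition~\ref{prop:wm}, giving a new proof of that result.)
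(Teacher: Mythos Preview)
Your argument is essentially the paper's proof. The construction of $\Psi$ from $dd^c\Phi(z,h,f)=-B(f)\Omega$ (Theorem~\ref{thm:bokm} with $\xi_k(f)=0$), the pluriharmonicity of $\Phi-B(f)\log|y|^2$, the application of Lemma~\ref{lem:logprod}, and the use of Lemma~\ref{lem:3.21} to obtain a unitary multiplier system all match the paper verbatim, as does the appeal to Margulis for $n>2$.

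The one point of divergence is the $n\leq 2$ case. You propose to lift $f$ to a weakly holomorphic $\tilde f$ on a larger space $\tilde V$ and restrict $\Psi(\tilde f)$ back; this is a plausible strategy, but the step ``lift $f$ to $\tilde f$'' is not free---the natural map goes the other way (contracting a Whittaker form for $\tilde V$ against the positive-definite theta series of $V^\perp$ produces one for $V$), and hitting a prescribed $f$ requires an argument you have not given. The paper takes an indirect but cleaner route (Corollary~\ref{cor:finmult}): the embedding trick of \cite{YZZ} is used first to prove the modularity of the generating series $A^0(\tau)$ for $n=1,2$ (Proposition~\ref{prop:modularity}); pairing with $f$ then yields the Chow-group relation $Z(f)=-B(f)c_1(\calM_1)$ in $\ch^1(X_K)_\Q$, which produces a meromorphic modular form $\tilde\Psi$ of weight $-B(f)$ with \emph{finite} multiplier and divisor $Z(f)$. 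The ratio $\Psi/\tilde\Psi$ is then holomorphic, nowhere zero, of weight $0$, hence constant, forcing the multiplier of $\Psi$ to be of finite order. This avoids having to lift $f$ at all.
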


\begin{proof}
We use Theorem \ref{thm:bokm}. The assumption that $f$ is weakly holomorphic means that $\xi_k(f)=0$. Consequently, we have
\begin{align*}
dd^c\Phi(z,h,f)&=-B(f)\Omega\\
&=B(f)dd^c\log|y|^2.
\end{align*}
Hence the function $\Phi(z,h,f)-B(f)\log|y|^2$ is pluriharmonic on $\calH$ and has a logarithmic singularity along $-2Z(f)$.
According to Lemma \ref{lem:logprod} there exists a meromorphic function
$\Psi(z,h,f)$ on $\calH\times H(\hat \Q)/K$ such that
\begin{align*}
\Phi(z,h,f)-B(f)\log|y|^2= -2\log|\Psi(z,h,f)|.
\end{align*}
So $\Phi(z,h,f)=-\log\|\Psi(z,h,f)\|_{Pet}^2$, where the Petersson metric is in weight $-B(f)$.
By construction, the divisor of $\Psi$ is equal to $Z(f)$.

The invariance properties of $\Phi(z,h,f)$ and Lemma \ref{lem:3.21} imply that there is a unitary multiplier system
$\chi$ such that $\Psi$ is a meromorphic modular form of weight $-B(f)$, level $K$, and multiplier system $\chi$.

When $n>2$, the Lie group $H(\R)$ has no almost simple factor of real rank $1$.
Hence, according to \cite{Mar} (Proposition 6.19 on p.~333), the multiplier system $\chi$ has finite order.
When $n\leq 2$ we will prove that $\chi$ has finite order in the next section by means of the embedding trick, see Corollary \ref{cor:finmult}.
\end{proof}

\begin{remark}
When $d=1$, then Theorem \ref{thm:bop} is compatible (up to a constant) via Proposition \ref{prop:wm} with the Borcherds lift of weakly holomorphic modular forms \cite[Theorem 13.3]{Bo2}. This follows from \cite{Br2}, Proposition 2.11.
\end{remark}

\section{Modularity of special divisors }

We first assume that $n>2$.
We use Theorem \ref{thm:bop} to prove that the generating series of special divisors is a Hilbert modular form of weight $\kappa$ with values in the Chow group. This result is proved in \cite{YZZ} using the modularity result for the cohomology classes of Kudla--Millson \cite{KM1, KM2, KM3}. Our proof is a variant of the proof that Borcherds gave for $F=\Q$, see \cite{Bo3}.

Next we consider the cases of small dimension $n=1,2$. Following \cite{YZZ} and \cite{Bo2}, the modularity result can be extended to this case by means of the embedding trick. As an application we
prove the finiteness of the multiplier system of the meromorphic modular form $\Psi(z,h,f)$ of Theorem \ref{thm:bop} for $n=1,2$.

\subsection{The case $n>2$}

For a Schwartz function $\varphi\in S_L\subset S(V(\hat F))$
we consider the special divisors $Z(m,\varphi)$.
According to Remark \ref{rem:deg0}, the divisors
\[
Z^0(m,\varphi):=Z(m,\varphi)-\frac{\deg Z(m,\varphi)}{\vol(X_K)} c_1(\calM_1)
\]
have degree $0$.
We define the generating functions
\begin{align*}
A(\tau,\varphi)&=A_V(\tau,\varphi):= -c_1(\calM_1) + \sum_{m\gg0} Z(m,\varphi)q^m,\\
A^0(\tau,\varphi)&= A^0_V(\tau,\varphi):= \sum_{m\gg 0} Z^0(m,\varphi)q^m,
\end{align*}
which we view as formal power series with coefficients in $\ch^1(X_K)$.
Here $q^m:=e(\tr(m\tau))$ and $c_1(\calM_1)$ is the Chern class in $\ch^1(X_K)$ of $\calM_1$  given by the divisor of a rational section.
Moreover, we define
the $\rho_L$-valued generating functions
\begin{align*}
A(\tau)&=  \sum_{\mu\in L'/L} A(\tau,\chi_\mu)\chi_\mu,\\
A^0(\tau)&=  \sum_{\mu\in L'/L} A^0(\tau,\chi_\mu)\chi_\mu.
\end{align*}
When the Eisenstein series $E_L(\tau,\kappa)$ is holomorphic, then
Remark \ref{rem:coeff} implies that
\[
A^0(\tau) = A(\tau)+c_1(\calM_1)E_L(\tau,\kappa).
\]

\begin{theorem}
\label{thm:modularity}
The generating series $A^0(\tau)$ belongs to $S_{\kappa,\rho_L}\otimes \ch^1(X_K)$.\end{theorem}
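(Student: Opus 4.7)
The plan is to imitate Borcherds' argument from \cite{Bo3} for $F=\Q$: extract linear relations in $\ch^1(X_K)$ among the divisors $Z^0(m,\mu)$ from Theorem \ref{thm:bop}, and then read these relations off against the duality between $S_{\kappa,\rho_L}$ and $H_{k,\bar\rho_L}/M^!_{k,\bar\rho_L}$ provided by the pairing \eqref{defpair}.

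For the first step, I would take an arbitrary weakly holomorphic Whittaker form $f=\sum c(m,\mu)f_{-m,\mu}\in M^!_{k,\bar\rho_L}$ with integer coefficients, and apply Theorem \ref{thm:bop} to obtain the meromorphic modular form $\Psi(z,h,f)$. Since $\Psi$ is a rational section of $\calM_{-B(f)}$, its divisor $Z(f)=\sum c(m,\mu)Z(m,\mu)$ represents $-B(f)c_1(\calM_1)$ in $\ch^1(X_K)$. Substituting $B(m,\mu)=-\deg Z(m,\mu)/\vol(X_K)$ from Remark \ref{rem:coeff} into $B(f)=\sum c(m,\mu)B(m,\mu)$ and rearranging yields
\[
\sum_{\mu,m} c(m,\mu)\,Z^0(m,\mu)=0\quad\text{in }\ch^1(X_K)
\]
for every integer-coefficient $f\in M^!_{k,\bar\rho_L}$.

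Next I would set up the duality. Let $P$ denote the $\C$-vector space of finite formal principal parts $\sum c(m,\mu)\chi_\mu$ indexed by $\mu\in L'/L$ and $m\ll 0$, let $L\subset P$ be the $\Z$-lattice of integer-coefficient principal parts, and identify $H_{k,\bar\rho_L}\cong P$ via the principal-part map. Let $P^!\subset P$ be the image of $M^!_{k,\bar\rho_L}$ and $L^!=L\cap P^!$. Tensoring the exact sequence $0\to L^!\to L\xrightarrow{\xi_k}\xi_k(L)\to 0$ of $\Z$-modules with $\C$, and using Proposition \ref{prop:xisurj} to see that $\xi_k(L)\otimes\C=S_{\kappa,\rho_L}$, we obtain $L^!\otimes\C=P^!$, so the vanishing above extends $\C$-linearly to all of $P^!$. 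The exact sequence \eqref{ex-sequ} gives $\dim_\C(P/P^!)=N:=\dim_\C S_{\kappa,\rho_L}$, whence $\dim_\C(P^!)^\perp=N$, and the Fourier-expansion map $S_{\kappa,\rho_L}\to (P^!)^\perp\subset P^*$ is well-defined by Proposition \ref{proppair} and injective, so by dimension count it is an isomorphism. Viewing $A^0$ as a $\ch^1(X_K)_\C$-valued linear functional on $P$ via $c\mapsto\sum c(m,\mu)Z^0(m,\mu)$, the extended relation says precisely that this functional vanishes on $P^!$, so $A^0\in (P^!)^\perp\otimes \ch^1(X_K)_\C = S_{\kappa,\rho_L}\otimes \ch^1(X_K)$, which is the desired conclusion.

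The main obstacle is the extension from integer-coefficient to $\C$-coefficient relations. The subspace $M^!_{k,\bar\rho_L}$ is cut out of $H_{k,\bar\rho_L}$ by the condition $\xi_k(f)=0$, whose matrix entries in the basis $\{f_{-m,\mu}\}$ are Fourier coefficients of holomorphic Poincar\'e series and hence transcendental in general. The flatness-plus-surjectivity argument above sidesteps these coefficients by working abstractly with the exact sequence \eqref{ex-sequ}, and it is precisely the surjectivity of $\xi_k$ from Proposition \ref{prop:xisurj} (which for weights $\kappa_j\geq 3/2$ requires the Hecke summation \eqref{def:xi2}) that makes this part of the argument go through.
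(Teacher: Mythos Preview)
Your overall strategy matches the paper's: use Theorem~\ref{thm:bop} to produce relations $Z(f)=-B(f)c_1(\calM_1)$ in $\ch^1(X_K)_\Q$ for integral $f\in M^!_{k,\bar\rho_L}$, reinterpret these as the vanishing of the pairing $\{A^0,f\}$, and then invoke the duality between $S_{\kappa,\rho_L}$ and $H_{k,\bar\rho_L}/M^!_{k,\bar\rho_L}$. The dimension count at the end is equivalent to the paper's double-orthogonal lemma (Lemma~\ref{lem:la}).

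The gap is in the passage from integer coefficients to $\C$-coefficients. You claim that tensoring $0\to L^!\to L\to\xi_k(L)\to 0$ with $\C$ and using surjectivity of $\xi_k$ yields $\xi_k(L)\otimes_\Z\C=S_{\kappa,\rho_L}$, hence $L^!\otimes_\Z\C=P^!$. But surjectivity of $\xi_k$ on $P=L\otimes\C$ only shows that the natural map $\xi_k(L)\otimes_\Z\C\to S_{\kappa,\rho_L}$ is \emph{surjective}; injectivity would require $\xi_k(L)$ to be a lattice of rank $N=\dim S_{\kappa,\rho_L}$ inside $S_{\kappa,\rho_L}$. A priori $\xi_k(L)$ is the $\Z$-span of countably many Poincar\'e series and could well have infinite $\Z$-rank (e.g.\ if two Poincar\'e series have an irrational ratio). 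In that case $\xi_k(L)\otimes_\Z\C$ has dimension strictly larger than $N$, the kernel of $\xi_k(L)\otimes\C\to S_{\kappa,\rho_L}$ is nonzero, and $L^!\otimes\C$ is a proper subspace of $P^!$. Your flatness argument therefore does \emph{not} sidestep the transcendence issue you correctly flag in your final paragraph---it silently assumes exactly the rationality you were trying to avoid.

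The paper closes this gap by invoking an arithmetic input: adapting \cite{McG}, one shows that $S_{\kappa,\rho_L}$ has a basis of cusp forms with integral Fourier coefficients. Via Proposition~\ref{proppair} this means $M^!_{k,\bar\rho_L}=S_{\kappa,\rho_L}^\perp$ is cut out of $H_{k,\bar\rho_L}$ by integer linear equations in the coefficients $c(m,\mu)$, so $M^!_{k,\bar\rho_L}$ is spanned by its integral elements and it suffices to check $\{A^0,f\}=0$ for those. Equivalently, this integrality forces $\xi_k(L)$ to sit inside a lattice in $S_{\kappa,\rho_L}$, which is precisely what your tensor argument needs.
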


For the proof of the theorem we need the following  linear algebra lemma.

\begin{lemma}
\label{lem:la}
Let $X$, $Y$ be vector spaces over a field $E$,
and let $\beta:X\times Y\to E$ be a non-degenerate bilinear form.
Let $X_1\subset X$ be a subspace and put
\begin{align*}
X_1^\perp &= \{y\in Y; \; \text{$\beta(x,y) = 0$ for all $x\in X_1$}\},\\
X_1^\perp{}^\perp &= \{x\in X; \; \text{$\beta(x,y) = 0$ for all $y\in X_1^\perp$}\}.
\end{align*}
If $X_1$ is finite dimensional, then $X_1^\perp{}^\perp=X_1$.\hfill $\square$
\end{lemma}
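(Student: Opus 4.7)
The inclusion $X_1 \subset X_1^{\perp\perp}$ is immediate from the definitions, so the task is to prove the reverse inclusion using finite-dimensionality. My plan is to argue by contradiction: assume there exists $x \in X_1^{\perp\perp} \setminus X_1$, and construct $y \in X_1^\perp$ with $\beta(x,y) \neq 0$.

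Pick a basis $x_1, \dots, x_n$ of $X_1$. Since $x \notin X_1$, the vectors $x_1, \dots, x_n, x$ are linearly independent in $X$. Consider the linear map
\[
\Phi : X \longrightarrow Y^*, \qquad \Phi(x')(y) = \beta(x', y).
\]
Non-degeneracy of $\beta$ on the left says precisely that $\Phi$ is injective. Consequently, the functionals $\Phi(x_1), \dots, \Phi(x_n), \Phi(x)$ are linearly independent in $Y^*$: if some linear combination $\sum a_i \Phi(x_i) + a \Phi(x)$ vanished, then $\Phi$ applied to $\sum a_i x_i + a x$ would be zero, forcing $\sum a_i x_i + a x = 0$, contradicting the linear independence of $x_1, \dots, x_n, x$.

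Now consider the evaluation map
\[
\operatorname{ev} : Y \longrightarrow E^{n+1}, \qquad y \longmapsto \bigl(\Phi(x_1)(y), \dots, \Phi(x_n)(y), \Phi(x)(y)\bigr).
\]
Its image is a subspace $W \subset E^{n+1}$, and a vector $(a_1,\dots,a_n,a) \in E^{n+1}$ annihilates $W$ if and only if $\sum a_i \Phi(x_i) + a \Phi(x) = 0$ in $Y^*$. By the linear independence established above, only the zero vector annihilates $W$, hence $W = E^{n+1}$. In particular, there exists $y \in Y$ with $\beta(x_i, y) = 0$ for all $i$ and $\beta(x, y) = 1$. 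Such a $y$ lies in $X_1^\perp$ but pairs nontrivially with $x$, contradicting $x \in X_1^{\perp\perp}$.

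The only place where finite-dimensionality of $X_1$ is used is in choosing the finite basis so that the image of $\operatorname{ev}$ lies in the finite-dimensional target $E^{n+1}$, where the equivalence between linear independence of functionals and surjectivity onto $E^{n+1}$ is elementary. No obstacle is expected; the argument is a standard duality computation.
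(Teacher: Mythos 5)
Your proof is correct. The paper itself only remarks that the inclusion $X_1\subset X_1^\perp{}^\perp$ is clear and that the reverse inclusion ``can be proved using the finite dimensionality of $X_1$'' without giving details; your duality argument (injectivity of $x'\mapsto\beta(x',\cdot)$, linear independence of the functionals attached to $x_1,\dots,x_n,x$, and surjectivity of the evaluation map onto $E^{n+1}$) is exactly the standard way to fill in that omitted step, so it matches the intended approach.
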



\begin{proof}[Proof of Theorem \ref{thm:modularity}]
We write $P_L$ for the vector space of $\rho_L$-valued formal power series with vanishing constant term,
that is, the vector space of formal power series of the form
\[
g= \sum_{\mu\in L'/L} \sum_{\substack{m\in \partial^{-1}+Q(\mu)\\ m\gg 0}} b(m,\mu)q^m\chi_\mu.
\]
We may view $S_{\kappa,\rho_L}$ as a subspace of $P_L$ by taking the Fourier expansion of a cusp form.
We extend the pairing $\{\cdot,\cdot\}$  between $H_{k,\bar\rho_L}$ and $S_{\kappa,\rho_L}$ defined in \eqref{defpair} to a non degenerate pairing between
$H_{k,\rho_L}$ and $P_L$ using the formula  \eqref{pairalt}.

We apply Lemma \ref{lem:la} with $X=P_L$, $X_1=S_{\kappa,\rho_L}$, $Y=H_{k,\bar \rho_L}$, and the pairing $\{\cdot,\cdot\}$. We have that $M^!_{k,\bar \rho_L}= S_{\kappa,\rho_L}^\perp$.
The generating series $A^0(\tau)$ is an element of $P_L\otimes \ch^1(X_K)$. According to the lemma it belongs to  $S_{\kappa,\rho_L}\otimes \ch^1(X_K)$ if and only if
\begin{align}
\label{eq:key}
\{A^0, f\} = 0\in \ch^1(X_K)_\C
\end{align}
for all $f\in M^!_{k,\bar \rho_L}$.
Adapting the argument of \cite{McG}, it can be proved that $S_{\kappa,\rho_L}$ has a basis of cusp forms with coefficients in $\Z$.
Hence it suffices to verify \eqref{eq:key} for those
\[
f= \sum_\mu\sum_{m\gg 0} c(m,\mu) f_{m,\mu}
\]
which are integral linear combinations of the $f_{m,\mu}$.
So for such $f$ we have to show that
\[
 \sum_\mu\sum_{m\gg 0} c(m,\mu)Z(m,\mu) =  \sum_\mu\sum_{m\gg 0} c(m,\mu)\frac{\deg Z(m,\varphi)}{\vol(X_K)} c_1(\calM_1)\in \ch^1(X_K)_\Q.
\]
In view of Remark \ref{rem:coeff} this is equivalent to
\[
Z(f) = -B(f)c_1(\calM_1)\in \ch^1(X_K)_\Q.
\]
But this relation is exactly produced by Theorem \ref{thm:bop}.
\end{proof}

\subsection{The embedding trick}


Let $V_1\subset V$ be a quadratic subspace defined over $F$, and assume that $V_1$ has signature
\[
((n_1,2),(n_1+2,0),\dots, (n_1+2,0))
\]
with $1\leq n_1 \leq n$. Let $V_2=V_1^\perp$. Then $V_2$ is totally positive definite and $V=V_1\oplus V_2$.
We view $H_1:=\Res_{F/\Q}\GSpin(V_1)$ as a subgroup of $H$ acting trivially on $V_2$ and put
$K_1:= H_1(\hat\Q)\cap K$. We let $\D_{V_1}$ be the sub-Grassmannian of $\D$ given by the oriented negative definite $2$-dimensional subspaces $z$ of $V_{1,\sigma_1}\subset V_{\sigma_1}$. We obtain an embedding of Shimura varieties
\[
\iota: X_{K_1,V_1}:= H_1(\Q)\bs (\D_{V_1}\times H_1(\hat \Q))/K_1 \longrightarrow X_{K,V}.
\]
It induces a pull-back homomorphism of the Chow groups
\[
\iota^*: \ch^1(X_{K,V})\longrightarrow \ch^1(X_{K_1,V_1}).
\]

The pull back of the generating series $A_V(\tau,\varphi)$ is computed in \cite{YZZ}, Proposition 3.1. Let $\varphi_i\in S(V_i(\hat F))$ and assume that $X_{K_1,V_1}$ is compact. Then
\begin{align}
\label{eq:pullback}
\iota^*(A_V(\tau,\varphi_1\otimes \varphi_2))= A_{V_1}(\tau,\varphi_1)\cdot \theta_{S,V_2}(\tau;\varphi_2),
\end{align}
where
\[
\theta_{S,V_2}(\tau;\varphi_2)= \sum_{\lambda\in V_2(F)}\varphi_2(\lambda)e(\tr Q(\lambda)\tau)
\]
is the usual theta series of the positive definite quadratic space $V_2$.
By embedding quadratic spaces over $F$ of dimension $n=1$ or $2$ into larger spaces, employing the pull back-formula \eqref{eq:pullback}, and varying $\varphi_2$, one obtains (see \cite{YZZ}, proof of Theorem 1.3):

\begin{proposition}
\label{prop:modularity}
Theorem \ref{thm:modularity} also holds for $n=1,2$.
\end{proposition}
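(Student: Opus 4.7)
The plan is to deduce Proposition \ref{prop:modularity} from Theorem \ref{thm:modularity} (which we already have for $n>2$) via the embedding trick of \cite{YZZ}. First I would fix a totally positive definite quadratic space $V_0$ over $F$ of even rank $r\geq 2$ (for instance $r=2$ suffices for both $n=1$ and $n=2$), together with an even $\calO_F$-lattice $L_0\subset V_0$. Setting $V^+:=V\oplus V_0$ and $L^+:=L\oplus L_0$, the enlarged space $V^+$ has $n^+:=n+r>2$ and satisfies the running signature hypothesis. Theorem \ref{thm:modularity} then applies to $V^+$ and gives that $A^0_{V^+}(\tau)\in S_{\kappa^+,\rho_{L^+}}\otimes \ch^1(X_{K^+,V^+})$ is a Hilbert cusp form of parallel weight $\kappa^+=((n^++2)/2,\ldots,(n^++2)/2)$ with values in the Chow group.

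Next I would invoke the natural embedding of Shimura varieties $\iota:X_{K_1,V}\hookrightarrow X_{K^+,V^+}$ (for compatible level subgroups) together with the pull-back formula \eqref{eq:pullback}:
\begin{equation*}
\iota^* A_{V^+}(\tau,\varphi_1\otimes \varphi_2)= A_V(\tau,\varphi_1)\cdot \theta_{S,V_0}(\tau,\varphi_2),\qquad \varphi_1\in S_L,\ \varphi_2\in S_{L_0}.
\end{equation*}
Translating to the degree-zero normalizations via $A^0=A+c_1(\calM_1)E_L(\tau,\kappa)$, using that $\iota^*c_1(\calM_1^{V^+})=c_1(\calM_1^V)$, and observing that the difference $E_{L^+}(\tau,\kappa^+)-E_L(\tau,\kappa)\cdot \theta_{S,V_0}(\tau,\varphi_2)$ is a holomorphic Hilbert modular form whose constant Fourier coefficient cancels (both terms contributing the class of the trivial coset), hence is actually a cusp form, one concludes that
\begin{equation*}
A^0_V(\tau,\varphi_1)\cdot \theta_{S,V_0}(\tau,\varphi_2)\ \in\ S_{\kappa^+,\rho_{L^+}}\otimes \ch^1(X_{K_1,V})
\end{equation*}
for every admissible $\varphi_2$.

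The main obstacle will be to extract modularity of $A^0_V(\tau)$ itself from the modularity of all of its products with the theta series $\theta_{S,V_0}(\tau,\varphi_2)$. I would handle this by letting $\varphi_2$ (and, if needed, the auxiliary data $(V_0,L_0)$) vary to produce a sufficiently rich family of modular products, then invoking the linear-algebra argument from \cite{YZZ}, Section 3. The point is that multiplication by $\theta_{S,V_0}(\cdot,\varphi_2)$ is injective on $\rho_L$-valued formal $q$-expansions, and as $\varphi_2$ ranges over $S_{L_0}$ the collection of images inside $M_{\kappa^+,\rho_{L^+}}$ cuts out, by a finite-dimensional linear condition on Fourier coefficients, exactly the subspace of formal series that are themselves modular of weight $\kappa$ for $\rho_L$. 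Applied to the formal series $A^0_V$, this gives the desired conclusion $A^0_V\in S_{\kappa,\rho_L}\otimes \ch^1(X_{K_1,V})$, and hence Proposition \ref{prop:modularity}.
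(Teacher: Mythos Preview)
Your approach is essentially the one the paper indicates (and attributes to \cite{YZZ}): embed $V$ into a larger space $V^+$ with $n^+>2$, apply Theorem~\ref{thm:modularity} there, pull back via \eqref{eq:pullback}, and vary $\varphi_2$ to extract modularity for $V$.

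One point to clean up: your passage to $A^0$ \emph{before} pulling back forces you to compare $E_{L^+}(\tau,\kappa^+)$ with $E_L(\tau,\kappa)\cdot\theta_{S,V_0}(\tau,\varphi_2)$, but as written these live in different spaces (the first is $S_{L^+}$-valued, the second $S_L$-valued for a fixed scalar $\varphi_2$), and the constant-term cancellation is not as automatic as you suggest. It is simpler to observe that $A_{V^+}=A^0_{V^+}-c_1(\calM_1)E_{L^+}(\tau,\kappa^+)$ is already modular of weight $\kappa^+$ (both summands are), so its pull-back $A_V(\tau,\varphi_1)\cdot\theta_{S,V_0}(\tau,\varphi_2)$ is modular for all $\varphi_1,\varphi_2$; varying $\varphi_2$ then gives modularity of $A_V$, and hence of $A^0_V=A_V+c_1(\calM_1)E_L(\tau,\kappa)$.
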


\begin{corollary}
\label{cor:finmult}
Let
$
f\in  M^!_{k,\bar\rho_L}
$
be a weakly holomorphic Whittaker form as in Theorem~\ref{thm:bop} and let $\Psi(z,h,f)$ be the corresponding meromorphic modular form for $H(\Q)$. The multiplier system of $\Psi$ has also finite order when $n=1,2$.
\end{corollary}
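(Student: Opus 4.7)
The plan is to use the embedding trick of \cite{YZZ} to reduce the finiteness of the multiplier $\chi$ of $\Psi(z,h,f)$ to the case $n>2$, which was handled in Theorem~\ref{thm:bop} via Margulis superrigidity.

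First I would choose a totally positive definite quadratic space $W$ over $F$ with $\dim W\geq 2$ and an even $\calO_F$-lattice $M\subset W$. Setting $\tilde V=V\oplus W$ and $\tilde L=L\oplus M$, we obtain a quadratic space of signature $((\tilde n,2),(\tilde n+2,0),\dots,(\tilde n+2,0))$ with $\tilde n=n+\dim W>2$, and $\rho_{\tilde L}\cong \rho_L\otimes \rho_M$. After choosing a compact open subgroup $\tilde K\subset\tilde H(\hat\Q)$ (where $\tilde H=\Res_{F/\Q}\GSpin(\tilde V)$) fixing $\tilde L$ and acting trivially on $\tilde L'/\tilde L$, with $K_1:=\tilde K\cap H(\hat\Q)\subset K$, the inclusion $H\hookrightarrow\tilde H$ induces an embedding $\iota\colon X_{K_1,V}\hookrightarrow X_{\tilde K,\tilde V}$ of Shimura varieties.

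The key step is to produce, for some positive integer $N$, an integral weakly holomorphic Whittaker form $\tilde f\in M^!_{\tilde k,\bar\rho_{\tilde L}}$ satisfying
\[
\iota^*Z(\tilde f)=N\cdot Z(f)\text{ in }\ch^1(X_{K_1,V}) \quad\text{and}\quad B(\tilde f)=N\cdot B(f).
\]
The existence of such $\tilde f$ with prescribed principal part reduces, via the exact sequence~\eqref{ex-sequ} and Proposition~\ref{proppair}, to the vanishing of this principal part against every cusp form in $S_{\tilde\kappa,\rho_{\tilde L}}$. This orthogonality follows by combining the pullback formula~\eqref{eq:pullback} with Theorem~\ref{thm:modularity} applied to $\tilde V$ (valid since $\tilde n>2$): modularity of $A^0_{\tilde V}$ forces the required coefficient identities in $\ch^1(X_{\tilde K,\tilde V})$, and the pullback formula identifies these with the cusp-form pairings on $X_{K_1,V}$. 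One then invokes Theorem~\ref{thm:bop} for $\tilde V$ to obtain the meromorphic modular form $\tilde\Psi:=\Psi(\tilde z,\tilde h,\tilde f)$ on $X_{\tilde K,\tilde V}$, of weight $-B(\tilde f)=-NB(f)$, divisor $Z(\tilde f)$, with finite-order multiplier system $\tilde\chi$.

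The pullback $\iota^*\tilde\Psi$ is then a meromorphic modular form on $X_{K_1,V}$ of weight $-NB(f)$ and divisor $NZ(f)=\dv(\Psi(z,h,f)^N)$, with multiplier $\tilde\chi|_{H(\Q)\times H(\hat\Q)}$ of finite order. Consequently $\iota^*\tilde\Psi/\Psi(z,h,f)^N$ is a meromorphic modular form of weight $0$ without zeros or poles on the compact Shimura variety $X_{K_1,V}$ (compactness holds since $V$ is anisotropic whenever $d>1$, while for $d=1$ one may alternatively appeal to \cite{Bo2} directly). By the maximum modulus principle this ratio is a constant of absolute value $1$, so $\chi^N$ coincides with $\tilde\chi|_{H(\Q)\times H(\hat\Q)}$, which has finite order, and therefore $\chi$ itself has finite order. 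The main obstacle in this scheme is the construction of $\tilde f$ satisfying both the divisor and weight identities simultaneously: the former rests on the modularity statement for $\tilde V$ and the pullback formula, while the latter is equivalent, via Remark~\ref{rem:coeff} and~\eqref{eq:defbf}, to a Fourier coefficient comparison of the Eisenstein series $E_{\tilde L}(\tau,\tilde\kappa)$ with $E_L(\tau,\kappa)\cdot\theta_{S,W}(\tau)$.
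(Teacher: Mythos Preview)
Your approach differs substantially from the paper's, and the key step—the construction of the weakly holomorphic Whittaker form $\tilde f$ on the larger space—has a genuine gap.

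The paper argues as follows. It first invokes Proposition~\ref{prop:modularity}, which extends Theorem~\ref{thm:modularity} to $n=1,2$ (this is where the embedding trick of \cite{YZZ} is used, at the level of the \emph{generating series}). Since $A^0\in S_{\kappa,\rho_L}\otimes\ch^1(X_K)$ and $f$ is weakly holomorphic, the pairing $\{A^0,f\}$ vanishes in $\ch^1(X_K)_\Q$, which is exactly the relation $Z(f)=-B(f)c_1(\calM_1)$. This Chow identity produces, after clearing denominators, a meromorphic modular form $\tilde\Psi$ of weight $-B(f)$ with divisor $Z(f)$ and a multiplier of finite order (it comes from an honest rational section of a power of $\calM_1$). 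Then $\Psi/\tilde\Psi$ has weight $0$, no zeros or poles, hence is constant, so the multiplier of $\Psi$ has finite order.

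Your scheme attempts to construct $\tilde\Psi$ instead as the pullback $\iota^*\Psi(\tilde z,\tilde h,\tilde f)$ of a Borcherds lift on $X_{\tilde K,\tilde V}$. The problem is your justification that the chosen principal part for $\tilde f$ is orthogonal to every $g\in S_{\tilde\kappa,\rho_{\tilde L}}$. You appeal to the modularity of $A^0_{\tilde V}$, but that statement runs the wrong way: it asserts that $\{A^0_{\tilde V},\tilde f\}=0$ \emph{for those $\tilde f$ already known to be weakly holomorphic}; it does not supply new relations that would force an arbitrary harmonic Whittaker form with your prescribed coefficients to lie in $M^!_{\tilde k,\bar\rho_{\tilde L}}$. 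Concretely, even if $M$ is unimodular so that $\rho_{\tilde L}\cong\rho_L$, the relevant cusp form spaces $S_{\tilde\kappa,\rho_L}$ and $S_{\kappa,\rho_L}$ live in different weights, and orthogonality to the latter (which you know, since $f\in M^!_{k,\bar\rho_L}$) does not imply orthogonality to the former. Filling this gap would amount to reproving Proposition~\ref{prop:modularity}, at which point the paper's much shorter argument—working entirely on $X_K$ and extracting $\tilde\Psi$ directly from the Chow relation—is available and avoids the auxiliary issues (level change to $K_1$, comparing pullback of divisors versus divisors of pullbacks, the Eisenstein coefficient comparison you flag at the end).
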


\begin{proof}
Since $f$ is weakly holomorphic and $A^0\in S_{\kappa,\rho_L}\otimes \ch^1(X_K)$ by
Proposition \ref{prop:modularity}, we have
$\{A^0,f\}=0\in \ch^1(X_K)_\Q$.
This is equivalent to
\[
Z(f) = -B(f)c_1(\calM_1)\in \ch^1(X_K)_\Q.
\]
So there is a meromorphic modular form $\tilde\Psi$ of weight $-B(f)$ and level $K$ with a multiplier system of {\em finite} order such that
$\dv(\tilde \Psi) = Z(f)$.
Consequently, $\Psi/\tilde \Psi$ is a holomorphic modular form of weight $0$ with no zeros and a multiplier system of possibly infinite order. But it is easily seen that such a modular form must be constant.
This proves the corollary.
\end{proof}

\section{Examples}
\label{sect:8}

Here we give some examples illustrating Theorem \ref{thm:bop} and Theorem \ref{thm:modularity}.

\subsection{Shimura curves}

Let $B/F$ be a quaternion algebra which is split at $\sigma_1$ and ramified at all the other real places of $F$.
Let $\delta\in \partial_F^{-1}$ be an element such that $\sigma_i(\delta)>0$ for $i=2,\dots,d$. We write $\norm(x)$ for the reduced norm of $x\in B$.
Then $(B, \delta\norm)$ is a quadratic space over $F$ of signature $((2,2),(4,0),\dots,(4,0))$.
The group $\GSpin(B)$ can be computed using similar arguments as in \cite{KuRa} \S0. One finds that
\[
\GSpin(B)\cong \{(g_1,g_2)\in B^\times \times  B^\times ;\; \norm(g_1)=\norm(g_2)\}.
\]
Under this identification the natural action of $\GSpin(B) $ on $B$ is identified with
$(g_1,g_2).b = g_1 b g_2^{-1}$ for $b\in B$.
We may view an $\calO_F$-order $\calO\subset B$ as an $\calO_F$-lattice in the quadratic space $B$.
The Shimura variety $X_K$ can be viewed as the product of two Shimura curves or as product of a Shimura curve with itself depending on the choice of the compact open subgroup $K$.

The subspace $B^0\subset B$ of trace zero elements of $B$ defines a quadratic subspace.  When $\sigma_1(\delta)$ is also positive, it has signature
$((1,2),(3,0),\dots,(3,0))$. We may identify
\[
\GSpin(B^0)\cong B^\times,
\]
the action on $B^0$ being given by conjugation.
So Theorem \ref{thm:bop} gives rise to automorphic forms on Shimura curves over $F$ with known divisor supported on special divisors.

It is interesting to consider the examples of Shimura curves investigated in \cite{El} in that way.
Here we briefly discuss the Shimura curve $X$ associated to the triangle group $G_{2,3,7}$, see \cite{El} Section 5.3. It is a genus zero curve with a number of striking properties. For instance, the minimal quotient area of a discrete subgroup of $\operatorname{PSL}_2(\R)$ is $1/42$, and it is only attained by the triangle group $G_{2,3,7}$.
Let $F$ be the totally real cubic field $\Q(\cos(2\pi/7))=\Q[x]/(x^3+x^2-2x-1)$.
It has discriminant $49$ and class number $1$. The inverse different $\partial_F^{-1}$ has a generator $\delta$ such that $\sigma_i(\delta)>0$ for $i=2,3$.
Let $B$ be the quaternion algebra over $F$ which is ramified at exactly the real places $\sigma_2,\sigma_3$. We view $(B,\delta\norm)$ as a quadratic space
over $F$ as above. Let $L$ be a maximal $\calO_F$-order of $B$.  Then at all finite places $\frakp$ of $F$ the lattice $L_\frakp$ is isomorphic to the $2\times 2$ matrices with entries in $\calO_{F,\frakp}$. This implies that $L$ is even unimodular and the corresponding Weil representation $\rho_L$ is trivial. The underlying lattice $(L,Q_\Q)$ over $\Z$ is isometric to $E_8\oplus H\oplus H$, where $H$ denotes a hyperbolic plane over $\Z$. We let $K\subset H(\hat\Q)$ be the stabilizer of the lattice $\hat L$. Then the corresponding Shimura variety $X_K$ is isomorphic to $X\times X$, where $X$ is the quotient of $\H$ by the group of units of norm $1$ of $L$.

The Jacquet--Langlands correspondence provides an isomorphism between the space of Hilbert cusp forms $S_\kappa$ of parallel weight $\kappa=(2,2,2)$ for $\Sl_2(\calO_F)$ and the space of holomorphic differential 1-forms on $X$. Since $X$ has genus $0$, we see that $S_\kappa=\{0\}$.
Consequently, any harmonic Whittaker form $f$ of weight $k=(0,2,2)$ is weakly holomorphic. Its regularized theta lift gives rise to a meromorphic modular form of weight $-B(f)$ on $X_K$. Its divisor $Z(f)$ is a linear combination of Hecke correspondences. The generating series $A(\tau)$ of special divisors is equal to
\[
c_1(\calM_1)E_L(\tau,\kappa).
\]

For any totally positive $m\in \partial_F^{-1}$ there exists a holomorphic modular form $\Psi_m$ on $X_K$ with divisor $Z(m,0)$ and weight $B(m,0)$, given by Theorem
\ref{thm:bop}. These modular forms are analogues of the form $j(z_1)-j(z_2)$ and its multiplicative Hecke translates on $Y(1)\times Y(1)$, where $j$ denotes the classical $j$-function and $Y(1)=\Sl_2(\Z)\bs \H$. In view of the work of Gross and Zagier on singular moduli, it would be interesting to compute the CM values of the functions $\Psi_m$.

Similar unimodular lattices can be constructed over
any totally real field of odd degree for which the different is a principal ideal in the narrow sense.
But clearly the space of cusp forms for $\Sl_2(\calO_F)$ of parallel weight $2$ will be non-trivial in general.

\subsection{Even unimodular lattices over real quadratic fields}

\label{sect:8.2}

It would be interesting to have some existence or classification results for even unimodular $\calO_F$-lattices (in the sense of Section~\ref{sect:lattices}),
since the Weil representation $\rho_L$ is trivial in this case. Adapting the arguments of \cite{Scha} and \cite{Ch}, one can obtain some results in this direction. (Notice that the setup in these references is slightly different from ours. For instance, they consider $\calO_F$-valued quadratic forms and define the dual lattice as the $\calO_F$-dual.)

Here we briefly consider the case where $F$ is a real quadratic field of discriminant $D$.

\begin{lemma}
There exists an even unimodular $\calO_F$-lattice of signature $((n,2),(n+2,0))$ if and only if
$n$ is divisible by $4$ and $\calO_F$ contains a totally positive unit $\eps$ such that $-\eps $ is a square modulo $4\calO_F$.
\end{lemma}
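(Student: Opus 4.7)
The plan is to translate this into a local-global problem: an even unimodular $\calO_F$-lattice of the prescribed signature exists if and only if such a lattice exists locally at every finite place and the real places have the correct signature. So I would first fix the quadratic space $V$ over $F$ by choosing its signature and determinant, then check existence of an even unimodular $\calO_{F,\frakp}$-lattice in $V_\frakp$ for each finite prime $\frakp$.

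For the necessity direction, I would first extract $4 \mid n$ from the underlying $\Z$-lattice $(L,Q_\Q)$: it is even unimodular of signature $(2n+2,2)$, so the classical congruence $p-q \equiv 0 \pmod 8$ gives $2n \equiv 0 \pmod 8$. Next, reading off the determinant of $V$ at the real places, the signature $((n,2),(n+2,0))$ forces $\det V$ to be a totally positive element of $F^\times/F^{\times 2}$, hence represented by a totally positive unit $\eps \in \calO_F^\times$ (up to squares). At non-dyadic primes $\frakp$ the existence of an even unimodular $\calO_{F,\frakp}$-lattice of the given (even) rank is automatic. The genuine obstruction sits at the dyadic primes: by the local classification of quadratic forms over dyadic fields (O'Meara, Chan), an even unimodular lattice of rank $\ell=n+2$ exists in $V_\frakp$ precisely when the discriminant of $V_\frakp$ is representable by a unit whose negative is a square modulo $4\calO_{F,\frakp}$. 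Patching these local conditions together via CRT modulo $4\calO_F$ yields a single totally positive unit $\eps$ with $-\eps$ a square modulo $4\calO_F$.

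For the sufficiency direction, I would reverse this construction. Given $4 \mid n$ and a totally positive unit $\eps$ with $-\eps \equiv \alpha^2 \pmod{4\calO_F}$, I first build the global quadratic space $V$ over $F$ with signature $((n,2),(n+2,0))$ and determinant $\eps$ (mod squares); the prescribed local invariants are compatible by Hasse--Minkowski since $4 \mid n$ makes the product formula work out. Then I construct an even unimodular $\calO_F$-lattice by assembling hyperbolic planes over $\calO_F$ (using a trace-pairing model on $\calO_F \oplus \partial_F^{-1}$) together with a rank-small ``core'' lattice whose local Gram matrix at the dyadic prime has the form $\bigl(\begin{smallmatrix} 2a & 1 \\ 1 & 2b\end{smallmatrix}\bigr)$ with $1-4ab \equiv -\eps \pmod{4\calO_F}$; here the congruence on $-\eps$ is exactly what makes such $a,b \in \calO_F$ exist. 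Since $4 \mid n$, the total rank $n+2$ splits as enough hyperbolic planes plus this core (and possibly one additional even unimodular rank-$8$ summand built from $E_8$-type data) so that all local invariants at dyadic primes match; at the real places the signature is checked directly.

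The main obstacle will be the dyadic local analysis: even unimodular lattices over $\calO_{F_\frakp}$ for $\frakp \mid 2$ are classified by a subtle combination of rank, discriminant, Hasse invariant, and norm generators, and extracting precisely the clean condition ``$-\eps$ is a square modulo $4\calO_F$'' from these invariants (and conversely constructing an explicit lattice realizing them) is the delicate part. A secondary technical point will be ensuring that the global assembly in the sufficiency step genuinely produces a lattice, not just a genus representative, which is a mild class-group check for real quadratic $F$.
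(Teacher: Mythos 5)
Your outline is the standard local--global route (essentially what the cited reference carries out from scratch), but as written it has a genuine gap at its crux. Over a dyadic completion $\calO_{F,\frakp}$ the even unimodular lattices of rank $2r$ are, up to isometry, $H^{r}$ and $H^{r-1}\perp\kzxz{2}{1}{1}{2\rho}$ with $1-4\rho$ a unit of quadratic defect $4\calO_{F,\frakp}$; consequently, whether a \emph{given} space $V_\frakp$ contains such a lattice constrains not only $\det V_\frakp$ (namely $(-1)^{r}\det V_\frakp$ must be a square modulo $4\calO_{F,\frakp}$ --- the minus sign in the lemma appears only because $4\mid n$ makes $r=(n+2)/2$ odd) but also the Hasse invariant of $V_\frakp$, which is pinned to that of one of these two lattices. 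Your stated criterion (``precisely when the discriminant is representable by a unit whose negative is a square mod $4$'') drops the Hasse constraint. The same happens at the odd primes dividing $D$: the trace-dual condition forces $L_\frakp$ to be $\partial_\frakp^{-1}$-modular, and existence inside a given $V_\frakp$ is not ``automatic'' --- it again forces the local Hasse invariant. Therefore the real content of your sufficiency step is to verify that the family of forced local Hasse invariants --- at every finite place (determined by $\eps$ and the shape of the local lattices) and at the two archimedian places (determined by the signatures $(n,2)$ and $(n+2,0)$) --- satisfies Hilbert reciprocity; this is exactly where $4\mid n$, the total positivity of $\eps$, and the congruence on $-\eps$ must enter, and your proposal only asserts that ``the product formula works out''. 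Without this computation (or an explicit construction realizing the invariants) the proof is incomplete.

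Two smaller points. First, ``$\det V$ is totally positive, hence a totally positive unit up to squares'' does not follow from the signature alone: you need unimodularity, which gives that the Gram determinant generates $\partial^{-\ell}$; since $F$ is real quadratic, $\partial=(\sqrt{D})$ is principal and $\ell$ is even, so $\det V$ is a unit times a square, and total positivity then forces that unit to be totally positive. (Your CRT gluing of the dyadic congruences into one congruence modulo $4\calO_F$ is fine.) Second, the worry about a ``class-group check'' is unfounded: inside a fixed global space $V$ a lattice with prescribed completions at finitely many primes, agreeing with a fixed lattice elsewhere, always exists, so nonemptiness of the genus already produces the lattice. For comparison, the paper avoids all of this by rescaling: $(L,Q)$ is even unimodular for the trace form if and only if $(L,\sqrt{D}Q)$ is even unimodular over $\calO_F$ in the sense of Chang, now of signature $((n,2),(0,n+2))$, and the lemma is then quoted from Theorem~3 of that reference --- so the local--global analysis you sketch is precisely what is outsourced there, and a self-contained version must supply the reciprocity computation above.
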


\begin{proof}
A lattice $(L,Q)$ is even unimodular of signature $((n,2),(n+2,0))$ in the sense of Section~\ref{sect:lattices}, if and only if $(L,\sqrt{D}Q)$ is a unimodular lattice of signature $((n,2),(0,n+2))$ in the sense of \cite{Ch}. Hence the lemma follows from Theorem~3 in \cite{Ch}.
\end{proof}

\begin{remark}
i) If $F=\Q(\sqrt{a})$ with $a>0$ squarefree and $a\equiv 3\pmod 4$ then the condition of the lemma is always fulfilled when $n$ is divisible by $4$. In this case we can take $\eps=1$.

ii) If the fundamental unit of $F$ has norm $-1$, then the condition of the lemma is never fulfilled, since every totally positive unit is a square and therefore $-1$ would have to be a square modulo $4\calO_F$. This is not the case as an elementary computation shows.

iii) If $\eps\in \calO_F$ is a totally positive unit such that
$-\eps = \alpha^2 -4\beta$ with $\alpha,\beta\in \calO_F$, then the lattice $L_1$ given by the Gram matrix
\begin{align*}
\frac{-1}{\sqrt{D}}
\begin{pmatrix}
2 & \alpha\\
\alpha & 2\beta
\end{pmatrix}
\end{align*}
is even unimodular of signature $((0,2),(2,0))$.
\end{remark}

It also follows from Theorem~3 in \cite{Ch} that for
any real quadratic field $F$ there
exists an even unimodular $\calO_F$-lattice $L_0$ of signature $((4,0),(4,0))$.
The corresponding lattice over $\Z$ is isometric to $E_8$. One can construct such a lattice explicitly by modifying the construction of \cite{Scha} Section 3.
For instance, for $F=\Q(\sqrt{3})$,
one can take the lattice with the Gram matrix
\begin{align*}
\begin{pmatrix}
-\sqrt{3}+2 & \frac{\sqrt{3}-1}{2} & \frac{-\sqrt{3}+3}{6} & \frac{\sqrt{3}}{6}\\
\frac{\sqrt{3}-1}{2} & 3 & \frac{\sqrt{3}}{2} & \frac{-\sqrt{3}-3}{6}\\
\frac{-\sqrt{3}+3}{6}  &\frac{\sqrt{3}}{2}  & 1 & \frac{\sqrt{3}+1}{2} \\
\frac{\sqrt{3}}{6} &\frac{-\sqrt{3}-3}{6}  & \frac{\sqrt{3}+1}{2} & 2+\sqrt{3}
\end{pmatrix}.
\end{align*}
If $n$ is divisible by $4$ and $F$ satisfies the condition of the lemma, then  $L=L_0^{\oplus n/4}\oplus L_1$ is even unimodular of signature $((n,2),(n+2,0))$.

Let now $F=\Q(\sqrt{3})$ and let $L$ be the lattice $L_0\oplus L_1$ as above. Let $K\subset H(\hat\Q)$ be the stabilizer of $\hat L$.
One can show that the space of Hilbert cusp forms $S_\kappa$ of parallel weight $\kappa=(3,3)$ for $\Sl_2(\calO_F)$ vanishes.
Consequently, any harmonic Whittaker form $f$ of weight $k=(-1,3)$ is weakly holomorphic.
For any totally positive $m\in \partial_F^{-1}$ there exists a holomorphic modular form $\Psi_m$ on $X_K$ with divisor $Z(m,0)$ and weight $B(m,0)$, given by Theorem
\ref{thm:bop}. The generating series $A(\tau)$ of special divisors is equal to
$c_1(\calM_1)E_L(\tau,\kappa)$.

\end{document}